\tikzset{
    >=stealth,
    every picture/.style={thick},
    graphs/every graph/.style={empty nodes},
}
\tikzstyle{vertex}=[
\tikzstyle{printersafe}=[decoration={snake,amplitude=0pt}]
\newcommand{\WDiv}{\operatorname{WDiv}}
\newcommand{\pp}{\mathbb{P}}
\newcommand{\qq}{\mathbb{Q}}
\newcommand{\zz}{\mathbb{Z}}
\newcommand{\rr}{\mathbb{R}}
\newcommand{\cc}{\mathbb{C}}
\newcommand{\kk}{\mathbb{K}}
\def\O#1.{\mathcal {O}_{#1}}			
\def\pr #1.{\mathbb P^{#1}}				
\def\af #1.{\mathbb A^{#1}}			
\def\ses#1.#2.#3.{0\to #1\to #2\to #3 \to 0}	
\def\xrar#1.{\xrightarrow{#1}}			
\def\K#1.{K_{#1}}						
\def\bA#1.{\mathbf{A}_{#1}}			
\def\bM#1.{\mathbf{M}_{#1}}				
\def\bL#1.{\mathbf{L}_{#1}}				
\def\bB#1.{\mathbf{B}_{#1}}				
\def\bK#1.{\mathbf{K}_{#1}}			
\def\subs#1.{_{#1}}					
\def\sups#1.{^{#1}}
\newtheorem{introdef}{Definition}
  \newtheorem{introthm}{Theorem}
  \newtheorem{theorem}{Theorem}[section]
  \newtheorem{lemma}[theorem]{Lemma}
  \newtheorem{proposition}[theorem]{Proposition}
  \newtheorem{corollary}[theorem]{Corollary}
  \newtheorem{definition}[theorem]{Definition}
  \newtheorem{example}[theorem]{Example}
  \newtheorem{question}[theorem]{Question}
\newtheorem{remark}[theorem]{Remark}
\theoremstyle{remark}
\numberwithin{equation}{section}
\begin{document}

\title[Reductive covers of klt varieties]{Reductive covers of klt varieties}

\thanks{
LB is supported by the Deutsche Forschungsgemeinschaft (DFG) grant BR 6255/2-1. 
}

\author[L.~Braun]{Lukas Braun}
\address{Mathematisches Institut, Albert-Ludwigs-Universit\"at Freiburg, Ernst-Zermelo-Strasse 1, 79104 Freiburg im Breisgau, Germany}
\email{lukas.braun@math.uni-freiburg.de}

\author[J.~Moraga]{Joaqu\'in Moraga}
\address{Department of Mathematics, Princeton University, Fine Hall, Washington Road, Princeton, NJ 08544-1000, USA
}
\email{jmoraga@princeton.edu}

\subjclass[2020]{Primary 14B05, 14E30, 14L24.}
\maketitle

\begin{abstract}
In this article, we study $G$-covers
of klt varieties,
where $G$ is a reductive group.
First, we exhibit an example of a klt singularity admitting a
$\mathbb{P}{\rm GL}_n(\kk)$-cover that is not of klt type. 
Then, we restrict ourselves to $G$-quasi-torsors, a special class of $G$-covers that behave like $G$-torsors outside closed subsets of codimension two. 
Given a $G$-quasi-torsor
$X\rightarrow Y$, 
where $G$ is a finite extension of a torus $\mathbb{T}$,
we show that $X$ is of klt type
if and only if $Y$ is of klt type.
We prove a structural theorem for $\mathbb{T}$-quasi-torsors over normal varieties in terms of Cox rings.
As an application, we show that every sequence of $\mathbb{T}$-quasi-torsors over a variety with klt type singularities is eventually a sequence of $\mathbb{T}$-torsors.
This is the torus version of a result due to Greb-Kebekus-Peternell regarding finite quasi-torsors of varieties with klt type singularities.
On the contrary, we show that in any dimension there exists a sequence of finite quasi-torsors 
and $\mathbb{T}$-quasi-torsors
over a klt type variety,
such that infinitely many of them are not torsors.
We show that every variety with klt type singularities is a quotient of a variety with canonical factorial singularities.
We prove that a variety with Zariski locally toric singularities
is indeed the quotient of a smooth variety by a solvable group. Finally, motivated by the work of Stibitz, we study the optimal class of singularities for which the previous results hold.
\end{abstract}

\setcounter{tocdepth}{1} 
\tableofcontents

\section{Introduction}

In algebraic geometry, we often encounter singularities
which are quotients of other singularities by algebraic groups.
Orbifold singularities are finite quotients of smooth points, 
toric singularities are abelian quotients of smooth points~\cite{CLS11}, and
terminal $3$-fold singularities are finite quotients of 
hypersurface singularities~\cite{Rei87}.
Furthermore,
many interesting
factorial singularities are 
${\rm SL}_n(\kk)$-quotients of smooth points~\cite{Bra21, Bra21b}.
In most cases, the respective group is reductive~\cite{Nag61}. 
Indeed, the reductivity assumption is what 
ensures that the quotient is of finite type. 
Reductive quotients preserve normal singularities
and rational singularities~\cite{Bou78}.
Recently, together with Greb and Langlois,
the authors proved that reductive quotients preserve the singularities
of the minimal model program~\cite{BGLM21}, the so-called klt type singularities~\cite{Kol13}.

In this article, we study a central topic in algebraic geometry:
how to improve the singularities of an algebraic variety
by taking appropriate covers. 
We focus on the singularities of the minimal model program.
To tackle this question, we need to comprehend what type of covers will indeed 
improve the singularities that we are studying.
This will lead us to the concepts of $G$-covers
and $G$-quasi-torsors.

\subsection{G-covers of klt singularities}
Let $G$ be an algebraic group.
A $G$-cover of a singularity $(X;x)$ is an
algebraic singularity $(Y;y)$ endowed with a $G$-action fixing $y$ 
so that $X$ is isomorphic to the quotient 
$Y/\!\!/G$ and $x$ is the image of $y$ (see Definition~\ref{def:G-cover-local}). 
In this setting, we say that $(Y;y)$ is a {\em $G$-cover} of
$(X;x)$ and we say that $(X;x)$ is a {\em $G$-quotient} of $(Y;y)$.
One can think about a $G$-cover of a singularity as a  degenerate principal $G$-bundle
over the singularity having maximal degeneration at the 
distinguished singular point.
$G$-covers often occur in singularity theory;
when replacing a singularity with
its universal cover~\cite{Bra20, BFMS22, LLM19}, 
 when taking the index one cover with respect to a $\qq$-Cartier divisor~\cite{KM98}, and 
when taking the Cox ring of the singularity~\cite{Bra19, BM21}.
$G$-covers are often useful to compute invariants of singularities~\cite{Mor20d}.
Thus, it is natural to ask whether 
a class of singularities is preserved by $G$-covers.
Of course, the answer to this question depends on the choice of $G$.
The first question that we settle on in this article is
whether the class of klt type singularities
is closed under reductive covers. 
Our first theorem is a negative answer to this question. 
We show the existence of a $3$-dimensional toric singularity
admitting a $5$-dimensional $\mathbb{P}{\rm GL}_3(\kk)$-cover which is not of
klt type. 

\begin{introthm}\label{introthrm:gl2-cover}
There exists a $3$-fold toric singularity $(X;x)$
that admits a $\mathbb{P}{\rm GL}_3(\kk)$-cover 
$(Y;y)\rightarrow (X;x)$
from a $5$-dimensional 
singularity $(Y;y)$
which is not of klt type.
\end{introthm}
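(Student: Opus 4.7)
The approach is to construct the example explicitly, and the relevant geometry is pinned down by a dimension count. Since $\dim \mathbb{P}{\rm GL}_3(\kk) = 8$ and $\dim Y - \dim X = 2$, the generic $\mathbb{P}{\rm GL}_3(\kk)$-orbit in $Y$ must be $2$-dimensional and hence isomorphic to $\mathbb{P}{\rm GL}_3(\kk)/P \cong \mathbb{P}^2$, where $P$ is a maximal parabolic. Therefore $Y$ should be built so that, away from a single $\mathbb{P}{\rm GL}_3(\kk)$-fixed point $y$, it is a $\mathbb{P}^2$-fibration over $X \setminus \{x\}$, while the entire fibre over $x$ collapses to $y$.

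To construct $Y$, I would look for a finitely generated normal $\mathbb{P}{\rm GL}_3(\kk)$-algebra $R$ multigraded by a sharp rank-$3$ monoid $\sigma^{\vee} \cap \zz^3$ and built from non-trivial irreducible $\mathbb{P}{\rm GL}_3(\kk)$-representations in each non-zero degree so that three conditions hold. First, $R^{\mathbb{P}{\rm GL}_3(\kk)} = \kk[\sigma^{\vee} \cap \zz^3]$, the coordinate ring of the intended $3$-dimensional affine toric variety $X$. Second, $\dim R = 5$, controlled by the standard formula $\dim R = \dim R^{\mathbb{P}{\rm GL}_3(\kk)} + \dim \mathbb{P}{\rm GL}_3(\kk)/H$ with $H = P$ the generic stabiliser. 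Third, the augmentation ideal $R_{+}$ corresponds to a single $\mathbb{P}{\rm GL}_3(\kk)$-fixed point $y$ of $Y = \Spec(R)$, lying over the toric vertex $x$ of $X$. A natural source for such an $R$ is a mixed multigraded construction glueing the toric coordinate ring of $X$ with the homogeneous coordinate ring of an equivariant projective embedding of $\mathbb{P}{\rm GL}_3(\kk)/P \cong \mathbb{P}^2$; the tunable parameters are the choice of rank-$3$ cone $\sigma^\vee$ and the precise attachment of line bundles on $\mathbb{P}^2$ to each monomial, so that the only $\mathbb{P}{\rm GL}_3(\kk)$-invariant sections remain purely toric.

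The core difficulty, and the crux of the proof, is showing that $(Y;y)$ is \emph{not} of klt type. The plan is to exhibit a $\mathbb{P}{\rm GL}_3(\kk)$-equivariant prime divisor $E$ extracted over $y$ such that its log discrepancy satisfies $a(E, Y, B) \leq 0$ for \emph{every} effective $\qq$-divisor $B$ making $K_Y + B$ $\qq$-Cartier. By reductivity of $\mathbb{P}{\rm GL}_3(\kk)$, one can average and assume $B$ to be $\mathbb{P}{\rm GL}_3(\kk)$-invariant, hence of the form $B = \pi^{*}B_X$, where $\pi : Y \to X$ is the GIT quotient and $B_X$ is effective on $X$. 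The log discrepancy of $E$ then decomposes as the log discrepancy of $\pi(E)$ on the toric (hence automatically klt) pair $(X, B_X)$ plus a contribution from the relative canonical class $K_{Y/X}$. The generic fibre of $\pi$ being $\mathbb{P}^2$ with anti-ample $K_{\mathbb{P}^2} = -3H$ forces $K_{Y/X}$ to carry a systematic negative contribution over $y$, and for a suitable weighted blow-up of $y$ the resulting negative coefficient overwhelms any positive one coming from the toric base. The main obstacle is the precise discrepancy bookkeeping: one must identify $E$ explicitly, compute $\ord_E(K_{Y/X})$ in terms of the weights defining $\sigma^{\vee} \cap \zz^3$ and the attached line bundles on $\mathbb{P}^2$, and verify that this negative contribution beats $a(\pi(E), X, B_X)$ uniformly in $B_X$.
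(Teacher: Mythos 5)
The geometric picture you set up --- a five-dimensional cone $Y$ whose generic $\mathbb{P}{\rm GL}_3(\kk)$-orbits are copies of $\mathbb{P}^2$ fibering over $X\setminus\{x\}$, all orbits collapsing to a fixed point over the vertex $x$ --- is the right shape, and it agrees with the paper's construction (the paper takes $Y$ to be the affine cone over a $\mathbb{P}^2$-bundle $\mathbb{P}(\mathcal{E})\to T$ over a smooth projective toric surface $T$). However, there is a genuine gap in your plan, and it is exactly at the crux you flag as ``the main obstacle.''

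First, your proposed criterion for non-klt-type is stronger than what is needed and is unlikely to hold. You want a single equivariant divisor $E$ over $y$ with $a(E,Y,B)\le 0$ for \emph{every} effective $\qq$-divisor $B$ making $K_Y+B$ $\qq$-Cartier. But ``$(Y;y)$ not of klt type'' only says that for each such $B$ there is \emph{some} $E_B$ with $a(E_B,Y,B)\le 0$; nothing forces these $E_B$ to coincide. In the paper's example the failure of klt type is not caused by one bad valuation: it is a global finiteness obstruction. Second, the averaging step --- replacing $B$ by a $G$-invariant boundary --- is not justified; $\mathbb{P}{\rm GL}_3(\kk)$ is not finite, so you cannot average, and even the weaker claim that klt-type implies the existence of a $G$-invariant klt boundary needs an argument. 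Third, and most importantly, your sketch never identifies the concrete reason the example is not of klt type. The paper's key move is to choose, via~\cite{GHPS12}, a rank-$3$ bundle $\mathcal{E}$ on a smooth toric surface $T$ such that $\mathbb{P}(\mathcal{E})$ is \emph{not} a Mori dream space, and then to argue \emph{indirectly}: if $(Y;y)$ were of klt type, then the exceptional divisor $E\cong\mathbb{P}(\mathcal{E})$ of the blow-up of the vertex would be of Fano type (by relative Picard rank one, plt adjunction, and antiampleness of $K_Y+E$ over $Y$), hence a Mori dream space by~\cite{BCHM10}, contradiction. This is precisely the structural input your ``tunable parameters'' and ``discrepancy bookkeeping'' would have to rediscover, and without it the construction of $R$ and the non-klt verification are both underdetermined. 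To repair your proposal you would need to (i) specify $\mathbb{P}(\mathcal{E})$ with non-finitely-generated Cox ring as the scaffolding, and (ii) replace the explicit-valuation plan by the indirect Fano-type/MDS argument, after which your cone picture lines up with the paper's proof.
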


In Proposition~\ref{prop:gln-cover-free}, 
we give further examples in this direction in which
the group $\mathbb{P}{\rm GL}_n(\kk)$ acts
freely on an open subset $Y$. 
However, the singularities are of higher dimension in these cases.

The previous theorem is the local analog of the well-known fact that
projective bundles over Fano type varieties may not be of Fano type.
Indeed, there exist projective bundles over Fano type surfaces
that are not Mori dream spaces~\cite{GHPS12}. 
However, it is known that split projective bundles over Fano
type varieties are Fano type~\cite{BM21}. 
Furthermore, finite covers of Fano type varieties 
are again Fano type varieties, under a restrictive hypothesis
in case there is ramification in codimension one (see, e.g.,~\cite[Lemma 3.18]{Mor20c}).
These two facts motivate the proof of the following theorem.

\begin{introthm}\label{introthm:torus-finite-cover}
Let $(X;x)$ be a klt type singularity. 
Let $G$ be a finite extension of a torus and 
$Y\rightarrow X$ be a 
$G$-quasi-torsor.
Then $(Y;y)$ is a klt type singularity.
\end{introthm}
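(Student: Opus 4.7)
The plan is to decompose the group $G$ via its identity component. Writing the exact sequence $1 \to \mathbb{T} \to G \to F \to 1$, where $\mathbb{T} = G^{\circ}$ is the torus and $F = G/\mathbb{T}$ is finite, I form the intermediate quotient $Z := Y/\!\!/\mathbb{T}$. Then $Y \to Z$ is a $\mathbb{T}$-quasi-torsor while $Z \to X$ is an $F$-quasi-torsor, so it suffices to prove that both a finite quasi-torsor and a $\mathbb{T}$-quasi-torsor over a klt type singularity is again of klt type.

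The finite step is handled by the Greb--Kebekus--Peternell theorem mentioned in the abstract: any finite quasi-torsor over a variety of klt type is of klt type. Applied to $Z \to X$, this gives that $(Z;z)$ is klt type at the preimage $z$ of $x$. Hence the problem reduces to the torus case $Y \to Z$.

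For the torus step I would appeal to the structural theorem for $\mathbb{T}$-quasi-torsors announced in the abstract, which describes $Y \to Z$ locally as a relative Cox construction. Writing $M$ for the character lattice of $\mathbb{T}$, there is a group homomorphism $M \to \Cl(Z)$, $m \mapsto [D_m]$, such that
\[
Y \cong \Spec_Z \bigoplus_{m \in M} \mathcal{O}_Z(D_m).
\]
I then reduce to the rank one case by filtering $M$ by a chain of rank one subquotients, so that $Y \to Z$ factors as a tower whose steps are either $\mathbb{G}_m$-quasi-torsors or finite cyclic quasi-torsors. The cyclic steps are again covered by the Greb--Kebekus--Peternell result, so the problem collapses to the case where $W \to Z'$ is a $\mathbb{G}_m$-quasi-torsor over a klt type base. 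There $W$ is locally the relative spectrum of the graded algebra $\bigoplus_{n\in\mathbb{Z}} \mathcal{O}_{Z'}(nD)$ for a Weil divisor $D$, which is a generalized cone construction over a klt type pair; klt type is preserved by comparing log discrepancies of $W$ with those of $Z'$ together with a standard analysis of the affine cone.

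The main obstacle is the torus step, where two points need care: first, that the sheaf of algebras in the Cox description is a finitely generated $\mathcal{O}_Z$-algebra so that $Y$ is a variety of finite type, which is where the structural theorem combined with finite generation results for Cox rings of klt type singularities is essential; and second, that the relative Cox construction produces a klt pair on the total space. For the latter, one constructs an effective divisor $\Delta_Y$ on $Y$ from a klt boundary $\Delta_Z$ on $Z$ using the ramification data of the quasi-torsor (which is unramified in codimension one by the torsor property in codimension one), and verifies that $(Y,\Delta_Y)$ is klt by comparing log discrepancies along divisors over $Y$ with those over $Z$ through the quasi-\'etale part of the morphism.
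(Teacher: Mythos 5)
Your decomposition of $G$ into the identity component $\mathbb{T}$ and the finite quotient $F$, with $Z := Y/\!\!/\mathbb{T}$, matches the overall architecture of the paper's proof of Theorem~\ref{thm:torus-finite-cover-klt-sing}: first dispose of the finite part, then treat the torus part. Two issues, one minor and one more serious.

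The minor issue is the attribution of the finite step. The fact that a finite quasi-torsor over a klt type singularity is again of klt type is not the Greb--Kebekus--Peternell theorem (that result is the iteration statement about sequences of finite quasi-torsors being eventually \'etale); it is the classical preservation result recorded as Lemma~\ref{lem:klt-type-finite-cover-quotient}, essentially coming from the behavior of log discrepancies under finite quasi-\'etale morphisms. This is what the proof actually cites, and the mathematics there is fine.

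The genuine gap is in the torus step, and in particular in the phrase ``verifies that $(Y,\Delta_Y)$ is klt by comparing log discrepancies along divisors over $Y$ with those over $Z$ through the quasi-\'etale part of the morphism.'' A $\mathbb{T}$-quasi-torsor $W\to Z'$ is not a finite morphism, so it is not quasi-\'etale, and the usual log discrepancy comparison for quasi-\'etale covers does not apply. Over the locus where the grading divisor $D$ is Cartier the morphism is a smooth $\mathbb{G}_m$-bundle and klt-type is obviously preserved; the entire difficulty is concentrated over the non-Cartier locus, and this is precisely where ``a standard analysis of the affine cone'' has no content. The ingredient that closes this gap is a theorem of the authors in an earlier paper (cited here as Lemma~\ref{lem:klt-type-torus-cover}, i.e.\ \cite[Theorem~3.23]{BM21}): the affine local Cox ring of a klt type singularity again has klt type singularities. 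The paper's proof uses this as follows: express $Y$ as a relative Cox space over $X$ via Lemma~\ref{lem:G-cover-is-local-Cox}; form the affine local Cox ring $Y_0$ of $(X;x)$ with respect to the same lattice $N$ and a character $\chi$; invoke \cite[Theorem~3.23]{BM21} to get that $Y_0$ is klt type; and then observe that because every divisor of $N$ becomes Cartier on $Y_0$, the induced map $Y\to Y_0$ is an honest $\mathbb{T}$-torsor, so $Y$ is \'etale-locally $Y_0\times\mathbb{T}$ and the conclusion follows from the \'etale-locality of klt type (Proposition~\ref{prop:klt-etale}). Your rank-one reduction is harmless but does not resolve anything: the $\mathbb{G}_m$ case over a genuine Weil divisor is exactly the case where the Cox-ring input is needed, and without it your proof of the torus step does not go through. (Also, since the character lattice $M$ of a torus is free, a flag of $M$ yields only $\mathbb{G}_m$-steps; the ``finite cyclic quasi-torsors'' you mention do not actually arise.)
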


A {\em $G$-quasi-torsor} is a special kind of $G$-cover which behaves
like a $G$-torsor outside codimension two subsets of $X$ \emph{and} $Y$. 
$G$-quasi-torsors are also called {\em almost principal fiber bundles} in the literature.
Hence, the class of klt type singularities is preserved
under reductive quotients 
and under $G$-quasi-torsors, whenever $G$ is a finite extension of a torus.
We emphasize that the condition 
on the ramification is necessary: 
even finite covers of a smooth point
with codimension one ramification
may not be of klt type (see Example~\ref{ex:cod-1-ram}).
Note that $G$ being a finite extension of a torus 
is equivalent to asking that the derived subgroup of its connected component is trivial~\cite{Hum75}.
It is an open problem to decide
whether the previous statement holds
for $G$ a reductive group (see Question~\ref{quest:G-qt-klt-type}).
With the previous theorem,
we have found the right type of covers that can improve
our klt type singularity: finite quasi-torsors and torus quasi-torsors. 
Whenever these are torsors, i.e., finite \'etale covers and toric bundles, the class of singularities of our variety will not change.
Thus, we are mostly interested in the finite quasi-torsors and $\mathbb{T}$-quasi-torsors that are not torsors.
These are the covers for which the \'etale class of a singularity may change. 
Moreover, these are exactly the covers detected by the
regional fundamental group of the singularity
and by the local Cox ring of the singularity (see~\cite{Bra20} and Definition~\ref{def:local-Cox-ring}).

\subsection{Torus covers of klt varieties}
As mentioned above,
one way to improve the singularities
of a variety is to produce
$\mathbb{T}$-quasi-torsors.
For instance, all toric varieties
are quotients of a smooth affine variety
by the action of an abelian linear algebraic group.
In a similar vein, the local Cox ring of a singularity often simplifies the singularity.
A natural way to obtain a $\mathbb{T}$-quasi-torsor of a 
variety is to mimic the Cox ring construction.
For example, 
if we consider Weil divisors
$W_1,\dots,W_k$ in $X$
spanning the subgroup $N$ of ${\rm WDiv}(X)$, then we can define the sheaf
\[
\mathcal{R}(X)_N := \bigoplus_{(m_1,\dots,m_k)\in \zz^k}
\mathcal{O}_X(m_1W_1+\dots+m_kW_k).
\] 
Then, the relative spectrum 
\[
Y := {\rm Spec}_X( \mathcal{R}(X)_N) \rightarrow X, 
\]
admits a natural $\mathbb{T}$-cover structure over $X$
which is a $\mathbb{T}$-quasi-torsor.
Here, $\mathbb{T}$ is a $k$-dimensional algebraic torus 
and the action of $\mathbb{T}$ on $Y$ is induced by the $\zz^k$-grading of the sheaf $\mathcal{R}(X)_N$.
Note that $Y\rightarrow X$ is a $\mathbb{T}$-torsor precisely at the points at which all the $W_i$'s are Cartier divisors.
The variety $Y$ will be called a {\em relative Cox space} of $X$.
Indeed, this relative version of the Cox space
locally behaves like the Cox space of the singularities of $X$.

Our next theorem states that every
$\mathbb{T}$-quasi-torsor over a normal variety is equivariantly isomorphic to a relative Cox space.

\begin{introthm}\label{introthm:cox-space-vs-g-covers}
Let $X$ be a normal variety.
Let $Y\rightarrow X$ be a $\mathbb{T}$-quasi-torsor.
Then, we can find Weil divisors $W_1,\dots,W_k$ on $X$
for which there
is a $\mathbb{T}$-equivariant isomorphism
\[
Y \simeq {\rm Spec}_X\left(  \bigoplus_{(m_1,\dots,m_k)\in \zz^k}
\mathcal{O}_X(m_1W_1+\dots+m_kW_k)
\right).
\]
\end{introthm}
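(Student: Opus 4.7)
The plan is to present $Y$ as the relative spectrum of $\pi_*\mathcal{O}_Y$, decompose this sheaf of algebras by characters of $\mathbb{T}$, identify each weight eigenspace as a reflexive rank-one sheaf, and choose the $W_i$ so that the graded algebra structure is induced by ordinary multiplication in the function field of $Y$.

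Since $X=Y/\!\!/\mathbb{T}$ is a good quotient, the morphism $\pi\colon Y\to X$ is affine and $Y\cong\Spec_X(\pi_*\mathcal{O}_Y)$. Reductivity of $\mathbb{T}$ gives the character decomposition
\[
\pi_*\mathcal{O}_Y\;=\;\bigoplus_{m\in M}\mathcal{F}_m,\qquad M:=\Hom(\mathbb{T},\mathbb{G}_m)\cong\mathbb{Z}^k,
\]
as a sheaf of $M$-graded $\mathcal{O}_X$-algebras, with $\mathcal{F}_0=\mathcal{O}_X$. Let $U\subset X$ be the torsor locus; by the quasi-torsor hypothesis both $X\setminus U$ and $Y\setminus\pi^{-1}(U)$ have codimension at least two. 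Over $U$, local trivializations of the torsor force each $\mathcal{F}_m|_U$ to be a line bundle. Because $Y$ is normal and $\pi^{-1}(U)$ has codimension-two complement in $Y$, Hartogs' lemma extends any weight-$m$ function on $\pi^{-1}(U\cap V)$ uniquely to a weight-$m$ function on $\pi^{-1}(V)$ for every open $V\subset X$. Hence each $\mathcal{F}_m$ is a reflexive rank-one sheaf, so $\mathcal{F}_m\cong\mathcal{O}_X(D_m)$ for some Weil divisor $D_m$.

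Now fix a $\mathbb{Z}$-basis $e_1,\dots,e_k$ of $M$, and for each $i$ choose a nonzero rational section $t_i$ of $\mathcal{F}_{e_i}$, which we view as a $\mathbb{T}$-semi-invariant element of $K(Y)$ of weight $e_i$. Define $W_i$ as the unique Weil divisor on $X$ for which $f\mapsto f\cdot t_i$ induces an isomorphism $\mathcal{O}_X(W_i)\xrightarrow{\sim}\mathcal{F}_{e_i}$; concretely, the coefficient of a prime divisor $\xi$ in $W_i$ records the order of $t_i$ along $\xi$ relative to a local generator of $\mathcal{F}_{e_i}$. Then for any $m=(m_1,\dots,m_k)\in M$ the Laurent monomial $t^m:=t_1^{m_1}\cdots t_k^{m_k}\in K(Y)$ is homogeneous of weight $m$, and multiplication by $t^m$ induces an isomorphism $\mathcal{O}_X(\sum_i m_iW_i)\xrightarrow{\sim}\mathcal{F}_m$ that is visible over the torsor locus and hence valid globally, both sides being reflexive. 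Assembling these for all $m$ yields a $\mathbb{T}$-equivariant isomorphism of $M$-graded $\mathcal{O}_X$-algebras between $\bigoplus_{m\in M}\mathcal{O}_X(\sum_i m_iW_i)$ and $\pi_*\mathcal{O}_Y$, since on both sides the multiplication is simply that of rational functions inside $K(Y)$. Applying $\Spec_X$ gives the claimed isomorphism of schemes.

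The main obstacle is ensuring that the divisors $W_i$ built from the $t_i$ encode the entire graded algebra structure and not merely the isomorphism classes of the individual $\mathcal{F}_m$. What makes this work is the rigidity coming from the function field: once the semi-invariants $t_1,\dots,t_k$ are fixed inside $K(Y)$, every other weight space $\mathcal{F}_m$ is forced to be generated by the Laurent monomial $t^m$, so the multiplicative structure of $\pi_*\mathcal{O}_Y$ is determined by those $k$ choices. Reflexivity of the $\mathcal{F}_m$ from the first step then promotes the identification from the torsor locus to all of $X$, which is exactly what is needed for the isomorphism to be global.
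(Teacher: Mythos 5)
Your proof is correct, and it takes a genuinely different route from the paper's. The paper argues in two stages: it first handles the case $\mathbb{T}=\mathbb{G}_m$ by a ``largest open subset'' gluing argument that leans on the affine-local result Lemma~\ref{lem:G-cover-is-local-Cox} (which is itself imported from \cite{BM21} and the theory of polyhedral divisors) together with Lemma~\ref{lem:isom-implies-lin-equiv} to compare the Weil divisors arising from overlapping charts; it then inducts on $\dim\mathbb{T}$ by quotienting by a codimension-one subtorus, invoking Lemma~\ref{lem:torus-inv-lin-equiv-div} to choose torus-invariant representatives on the intermediate space $Y_0$ and using the absence of horizontal $\mathbb{G}_m$-invariant divisors to descend them to $X$. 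You instead go directly: decompose $\pi_*\mathcal{O}_Y$ into $\mathbb{T}$-weight spaces $\mathcal{F}_m$, use normality of $Y$ plus the codimension-two condition on \emph{both} the base and the cover to show each $\mathcal{F}_m$ equals $j_*(\mathcal{F}_m|_U)$ and hence is reflexive of rank one, pick semi-invariant rational functions $t_i\in K(Y)$ of basis weights to nail down the divisors $W_i$, and then observe that the Laurent monomials $t^m$ rigidify the whole graded multiplication inside $K(Y)$. This is more conceptual, handles all torus dimensions in one pass, and does not rely on the local black box. The trade-off is that the paper's route develops intermediate lemmas (in particular the comparison machinery of Lemma~\ref{lem:two-qt} and the descent of invariant divisors) that it reuses later in the article, whereas your argument is a one-shot proof of this theorem. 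Two small points you should make explicit if writing this up formally: coherence of each weight space $\mathcal{F}_m$ (it is a graded piece of a finite-type sheaf of $\mathcal{O}_X$-algebras, hence coherent), and that taking weight-$m$ components commutes with the Hartogs extension, so the extended function is again of weight $m$ (true by density of $\pi^{-1}(U\cap V)$). Neither is a gap, just a line of justification to add.
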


Theorem~\ref{introthm:torus-finite-cover} implies that a $\mathbb{T}$-quasi-torsor over a klt type singularity is again of klt type. 
On the other hand, Theorem~\ref{introthm:cox-space-vs-g-covers}, implies that a relative Cox ring of a normal variety
is equivariantly isomorphic to a $\mathbb{T}$-quasi-torsor.
Combining these two results, we obtain that a relative Cox ring of a klt type variety 
also has klt type singularities.

\begin{introthm}\label{introthm-finite-torus-cover-klt-var}
Let $X$ be a variety with klt type singularities.
Let $Y\rightarrow X$ be a relative Cox ring.
Then $Y$ has klt type singularities.
\end{introthm}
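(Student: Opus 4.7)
The plan is to leverage Theorem~\ref{introthm:torus-finite-cover} essentially directly: once we recognize that the relative Cox ring $Y = \operatorname{Spec}_X(\mathcal{R}(X)_N) \to X$ is a $\mathbb{T}$-quasi-torsor, the desired conclusion follows by applying that theorem pointwise at every point of $Y$. Note that $\mathbb{T}$ is trivially a finite extension of a torus, so no additional check on the group is needed. Thus the result is really a corollary of the structural discussion preceding Theorem~\ref{introthm:cox-space-vs-g-covers} combined with Theorem~\ref{introthm:torus-finite-cover}.

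The first step is to confirm the $\mathbb{T}$-quasi-torsor structure on the relative Cox ring. The $\mathbb{T}$-action on $Y$ comes from the $\zz^k$-grading of $\mathcal{R}(X)_N$. Let $Z \subset X$ be the closed subset of points at which some $W_i$ fails to be Cartier; since $X$ is normal, $Z$ has codimension at least two in $X$. Over $X \setminus Z$, the sheaf $\mathcal{R}(X)_N$ is locally a Laurent polynomial algebra in $k$ variables, so $f \colon Y \to X$ restricts there to a genuine $\mathbb{T}$-torsor. Together with normality of $Y$, this is what upgrades $f$ from a $\mathbb{T}$-cover to a $\mathbb{T}$-quasi-torsor. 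The second step is pointwise application: for each $y \in Y$ with image $x = f(y)$, one restricts $f$ to an affine neighborhood of $x$ and applies Theorem~\ref{introthm:torus-finite-cover} to conclude that $(Y;y)$ is of klt type. Since $y$ was arbitrary, $Y$ has klt type singularities.

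The main obstacle I anticipate lies entirely in the first step, and specifically in verifying that the preimage $f^{-1}(Z)$ inside $Y$ also has codimension at least two, not merely $Z$ inside $X$. This is built into the definition of a $\mathbb{T}$-quasi-torsor and is what distinguishes it from a cover that is a torsor outside codimension two only downstairs. I would expect this to follow from a dimension count using the $\mathbb{T}$-action (whose generic orbits are $k$-dimensional so fiber dimensions are controlled) or, alternatively, directly from the structural properties of relative spectra of divisorial algebras developed in the body of the paper. Once the quasi-torsor property is in hand, the theorem is an immediate consequence of Theorem~\ref{introthm:torus-finite-cover}.
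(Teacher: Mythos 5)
Your proposal matches the paper's proof, which simply observes that the statement is local and then invokes Theorem~\ref{introthm:torus-finite-cover} together with Theorem~\ref{introthm:cox-space-vs-g-covers} (the identification of $\mathbb{T}$-quasi-torsors with relative Cox spaces). The codimension-two condition upstairs that you flag as the potential obstacle is exactly what the quasi-torsor notion packages, and the paper likewise leaves it to the structural results around Theorem~\ref{introthm:cox-space-vs-g-covers} (e.g.\ Lemma~\ref{lem:G-cover-is-local-Cox}) rather than spelling it out in this particular proof.
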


In summary, the class of $\mathbb{T}$-quasi-torsors
of klt type varieties agrees with the class of relative Cox spaces.
Furthermore, the Cox spaces again have klt type singularities.
In Example~\ref{ex:sing-improve}, we show that the singularities
can indeed improve by taking the relative Cox ring. 

In~\cite[Theorem 1.1]{GKP16}, the authors show that any sequence of finite quasi-torsors over a variety with klt type singularities
is eventually a sequence of finite torsors.
This means that all but finitely many of the finite quasi-torsors are torsors, i.e., 
finite Galois \'etale covers.
It is natural to ask if a similar principle holds
for $\mathbb{T}$-quasi-torsors.
In this direction, we prove a torus version
of the theorem due to Greb, Kebekus, and Peternell.

\begin{introthm}\label{introthm:iteration-torus-quasi-torsors}
Let $X$ be a variety with klt type singularities.
Consider a sequence of morphisms:
\[
 \xymatrix@R=2em@C=2em{
X=X_0 &
X_1\ar[l]_-{\phi_1} &
X_2\ar[l]_-{\phi_2} &
X_3\ar[l]_-{\phi_3} & 
\dots \ar[l]_-{\phi_4} &
X_i\ar[l]_-{\phi_i} &
X_{i+1}\ar[l]_-{\phi_{i+1}}& 
\dots \ar[l]_-{\phi_{i+2}}&
}
\] 
such that each $\phi_i\colon X_i\rightarrow X_{i-1}$
is a $\mathbb{T}$-quasi-torsor.
Then, there exists $j$ such that
for every $i\geq j$
the morphism $\phi_j$ is a 
$\mathbb{T}$-torsor.
\end{introthm}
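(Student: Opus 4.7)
The plan is to combine the structural Theorem~\ref{introthm:cox-space-vs-g-covers} with a finiteness principle for local class groups of klt type singularities, adapting the finite-cover strategy of Greb--Kebekus--Peternell~\cite{GKP16} to the toric setting. First, by iterated application of Theorem~\ref{introthm-finite-torus-cover-klt-var}, every $X_i$ has klt type singularities, so the argument can proceed uniformly along the tower. Next, by Theorem~\ref{introthm:cox-space-vs-g-covers}, each $\phi_i\colon X_i\to X_{i-1}$ is $\mathbb{T}$-equivariantly isomorphic to the relative spectrum of a Cox-type sheaf determined by Weil divisors $W_1^{(i)},\dots,W_{k_i}^{(i)}$ on $X_{i-1}$, and $\phi_i$ is a $\mathbb{T}$-torsor exactly when each $W_j^{(i)}$ is Cartier on $X_{i-1}$.

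The main invariant is the local Weil-modulo-Cartier class group. For each closed point $x\in X_i$, set $A(X_i,x):=\Cl(\mathcal{O}_{X_i,x})/\Pic(\mathcal{O}_{X_i,x})$; for klt type singularities this group is finitely generated with finite torsion subgroup, the torsion being controlled by the regional fundamental group~\cite{Bra20}. Stratifying $X_i$ by singularity type, one extracts a finite positive-integer invariant $\rho(X_i)$ packaging together the maximal rank of $A(X_i,\cdot)$ and the exponent of its torsion part. The key structural claim is that along $\phi_i$ the classes of the pulled-back divisors $W_j^{(i)}$ die in $A(X_i,y)$ for every $y\in\phi_i^{-1}(x)$: indeed, in the Cox-type description they are cut out by the coordinate characters of $\mathbb{T}$, hence become principal upstairs.

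If $\phi_i$ is not a torsor, some $W_j^{(i)}$ is non-Cartier at a point $x\in X_{i-1}$, so a non-trivial class of $A(X_{i-1},x)$ is killed under pullback; combined with the fact that no \emph{new} non-Cartier classes are produced on $X_i$ (again by the explicit Cox-type description), this forces $\rho(X_i)<\rho(X_{i-1})$. Since $\rho$ is a positive integer that strictly decreases exactly at each non-torsor index, only finitely many $\phi_i$ can fail to be $\mathbb{T}$-torsors, yielding the desired $j$. The principal obstacle lies in these last two technical points: (i) choosing $\rho$ so that it genuinely strictly decreases at any single non-torsor step—a lexicographic combination of the maximal rank and torsion exponent across strata should do—and (ii) controlling the creation of non-Cartier classes on $X_i$ via the relative Cox construction. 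Both are direct torus analogues of the regional-fundamental-group-plus-Noetherian-descent strategy of~\cite{GKP16}, but the toric setting demands careful bookkeeping of characters and gradings in the Cox-type sheaf.
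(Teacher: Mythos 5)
Your overall strategy --- exploit klt type finiteness of local class groups and run a descent --- is in the right spirit, but the mechanism you propose for producing a strictly decreasing invariant has a genuine gap, and this is precisely the point where the paper's proof diverges from yours.

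The problem is that your invariant $\rho(X_i)$, being a maximum of (rank, torsion exponent) of $A(X_i,\cdot)$ over all closed points (or over a stratification), need not strictly decrease when $\phi_i$ fails to be a torsor. A non-torsor $\phi_i$ only guarantees that some class in $A(X_{i-1},x)$ dies at some particular point $x$; if there is another point $x'$ (or a positive-dimensional stratum of points) where $A(X_{i-1},\cdot)$ achieves the maximum and where the divisors $W_j^{(i)}$ are already Cartier, then $\rho(X_i)=\rho(X_{i-1})$. Concretely, take $X$ with two singular points $p_1,p_2$ whose local class groups are both $\mathbb{Z}/2$, generated by divisors supported in disjoint neighborhoods; the relative Cox ring associated to the divisor at $p_1$ is not a torsor, yet $\rho$ is unchanged because the class group over $p_2$ persists. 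Replacing the max with a sum over strata does not help either, since the number of strata can change along the tower. Your subsidiary claim that "no new non-Cartier classes are produced on $X_i$" (local class groups of $X_i$ over $x$ are quotients of $\mathrm{Cl}(X_{i-1};x)$) is correct, but it does not rescue the strict decrease.

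The paper's proof avoids this entirely by working on the \emph{fixed base} $X_0$. Using Lemma~\ref{lem:two-qt}, the composed covers $\psi_i=\phi_i\circ\cdots\circ\phi_1$ correspond to an ascending chain of subgroups $N_1\leqslant N_2\leqslant\cdots$ of $\mathrm{WDiv}(X_0)$, and $\phi_{i+1}$ is a torsor if and only if $N_i$ and $N_{i+1}$ have the same image in every local class group of $X_0$, in particular if their images in $\mathrm{WDiv}(X_0)/\mathrm{CaDiv}(X_0)$ agree. The essential ingredient you are missing is Theorem~\ref{thm:wdiv-cdiv-klt-type}: the \emph{global} group $\mathrm{WDiv}(X_0)/\mathrm{CaDiv}(X_0)$ is finitely generated for klt type $X_0$. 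This is not a formal consequence of the local class groups being finitely generated; it requires the Noetherian-induction argument of Theorem~\ref{thm:wdiv-cdiv} based on Koll\'ar's constructibility result (Proposition~\ref{prop:kollar-Cartier}), which shows that $\mathrm{WDiv}(X_0)/\mathrm{CaDiv}(X_0)$ embeds into a finite direct sum of local class groups. Once this is in place, the images of the $N_i$ stabilize by Noetherianity, and Lemma~\ref{lem:two-qt}.(3) gives that the $\phi_i$ are eventually torsors. To repair your write-up, you would replace the pointwise invariant $\rho$ by the image of the chain $N_i$ in the single finitely generated group $\mathrm{WDiv}(X_0)/\mathrm{CaDiv}(X_0)$.
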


\subsection{Iteration of torus and finite covers} 
Our previous theorem states that 
any sequence of $\mathbb{T}$-quasi-torsors 
over a variety with klt type singularities
is eventually a sequence of $\mathbb{T}$-torsors.
It is natural to investigate what happens for sequences of $\mathbb{T}$-quasi-torsors
and finite quasi-torsors, i.e.,
to study mixed sequences of torus and finite covers.
In this direction, we will show that 
all but finitely many of the finite quasi-torsors are indeed torsors.

\begin{introthm}
\label{thm:iteration1}
Let $X$ be a variety with klt type singularities.
Consider a sequence of morphisms:
\[
 \xymatrix@R=2em@C=2em{
X=X_0 &
X_1\ar[l]_-{\phi_1} &
X_2\ar[l]_-{\phi_2} &
X_3\ar[l]_-{\phi_3} & 
\dots \ar[l]_-{\phi_4} &
X_i\ar[l]_-{\phi_i} &
X_{i+1}\ar[l]_-{\phi_{i+1}}& 
\dots \ar[l]_-{i+2}&
}
\]
such that each
$\phi_i$ is either a finite quasi-torsor or a torus quasi-torsor.
Then, all but finitely many of the finite quasi-torsors are torsors, i.e., finite \'etale Galois morphisms.
\end{introthm}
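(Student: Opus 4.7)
The plan is to combine~\cite[Theorem~1.1]{GKP16}, which terminates towers of finite quasi-torsors over klt type varieties, with Theorem~\ref{introthm:iteration-torus-quasi-torsors} above, which does the same for torus quasi-torsors. First, by induction on $i$ using Theorem~\ref{introthm:torus-finite-cover} (both finite groups and algebraic tori are finite extensions of a torus), each $X_i$ has klt type singularities. I then argue by contradiction: suppose that infinitely many of the finite quasi-torsors in the tower fail to be torsors.

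The main technical step will be to straighten the mixed tower into a pure finite tower carrying compatible torus data on each level. Given consecutive covers $X_{i+1}\xrightarrow{\phi_{i+1}} X_i \xrightarrow{\phi_i} X_{i-1}$ with $\phi_i$ a torus quasi-torsor and $\phi_{i+1}$ a finite quasi-torsor, I would use the structural result Theorem~\ref{introthm:cox-space-vs-g-covers} to present $X_i$ as a relative Cox spectrum over $X_{i-1}$ and then analyze the descent of $\phi_{i+1}$ through this torus fiber bundle. Two things can occur: either $\phi_{i+1}$ descends, producing a finite quasi-torsor $X'_{i+1}\to X_{i-1}$ through which $X_{i+1}$ factors as a torus quasi-torsor, or $\phi_{i+1}$ is absorbed into a compatible isogeny refinement of $\phi_i$, yielding a larger torus quasi-torsor $X''_i\to X_{i-1}$ that dominates $X_{i+1}$. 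Repeating this swap-or-absorb step allows me to rewrite the tower as a commutative diagram whose base layer is a tower of finite quasi-torsors $Y_0=X\leftarrow Y_1\leftarrow\cdots$ and whose upper layer consists of compatible torus quasi-torsors $X_{n_k}\to Y_k$ capturing the absorbed data.

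Once this straightening is in place, the contradiction splits into two cases. If infinitely many $Y_k\to Y_{k-1}$ are non-torsor finite quasi-torsors, this contradicts~\cite[Theorem~1.1]{GKP16} applied to the klt type variety $X$. Otherwise, all but finitely many of the non-torsor finite covers in the original tower must have been absorbed into the torus levels, producing an infinite sequence of non-torsor torus quasi-torsors above some fixed $Y_K$ (which is itself klt type by Theorem~\ref{introthm:torus-finite-cover}), contradicting Theorem~\ref{introthm:iteration-torus-quasi-torsors}.

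The main obstacle I anticipate is the swap-or-absorb step itself: both the descent of a finite quasi-torsor through a torus fiber bundle and the verification that an absorbed finite non-torsor forces the enlarged torus cover to remain a non-torsor require a careful codimension-two analysis using the Cox ring presentation provided by Theorem~\ref{introthm:cox-space-vs-g-covers}. The cleanest way to encode the absorption is probably to rephrase everything in terms of Weil-divisorial sheaves and to identify absorbed finite quasi-torsors with root stack constructions along the divisors $W_j$ that define the relevant relative Cox space.
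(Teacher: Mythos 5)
Your plan is fundamentally the same as the paper's: straighten the mixed tower by pushing finite quasi-torsors past the torus quasi-torsors, so that an infinite family of non-torsor finite covers contradicts~\cite[Theorem~1.1]{GKP16}. The paper carries out the swap using the exact sequence from \cite[Proof of Prop.~5.1]{BM21},
\[
\zz^{\dim\mathbb{T}} \to \pi_1(X_i^{\rm reg}) \xrightarrow{\ \varphi\ } \pi_1(X_{i-1}^{\rm reg}) \to 1,
\]
taking the finite cover corresponding to $N \leqslant \pi_1(X_i^{\rm reg})$ and pushing it down to the cover of $X_{i-1}$ corresponding to $\varphi(N)$, then reconstructing $X_{i+1}$ as a fiber product over $X_{i-1}$.

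Where you genuinely diverge is in explicitly allowing an ``absorb'' case — the scenario in which the image $K$ of $\zz^{\dim\mathbb{T}}$ in $\pi_1(X_i^{\rm reg})$ is not contained in $N$, so that $\varphi^{-1}(\varphi(N)) = K\cdot N$ may be strictly larger than $N$ and the fiber product $X_i \times_{X_{i-1}} X'_i$ recovers only an intermediate cover, not $X_{i+1}$ itself. This is a legitimate concern that the paper's proof does not discuss; the asserted identity $X_{i+1} = X_i \times_{X_{i-1}} X'_i$ silently assumes $K \subseteq N$. So your instinct to treat the ``residual'' finite abelian piece separately is sound. However, your plan for handling that residual piece does not go through as stated: the absorbed datum $X_{i+1} \to X''_{i+1}$ is a \emph{finite} abelian cover (an isogeny on the generic torus fiber), not a torus quasi-torsor, so it cannot be fed directly into Theorem~\ref{introthm:iteration-torus-quasi-torsors}. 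Your claim that infinitely many absorptions ``produce an infinite sequence of non-torsor torus quasi-torsors'' is exactly the point requiring justification and you leave it as a hope. Note also that the theorem you are proving only constrains the \emph{finite} quasi-torsors, so you want to show the residual finite abelian piece can be re-expressed inside the torus layer (e.g.\ by enlarging the acting torus via an isogeny $\mathbb{T}'\to\mathbb{T}$, or a root-stack/Cox-ring argument as you gesture at); this is the content that a complete proof would have to supply, and it is also the content the paper's appeal to the fiber product elides. In short: same architecture, you correctly flag a gap the paper glosses over, but your fallback route through Theorem~\ref{introthm:iteration-torus-quasi-torsors} misidentifies the absorbed object and does not close the gap either.
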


Note that the previous theorem gives a generalization of~\cite[Theorem 1.1]{GKP16}.
Indeed, if we require that each $\phi_i$ is a finite quasi-torsor, then we recover this statement.
It is natural to wonder whether in the context of the previous theorem we can further obtain that all but finitely many of the quasi-torsors are torsors. 
First, notice that such a statement holds trivially for smooth varieties. 
Indeed, by the purity of the branch locus, every finite quasi-torsor over a smooth variety is a finite torsor.
On the other hand, by Theorem~\ref{introthm:cox-space-vs-g-covers}, every $\mathbb{T}$-quasi-torsor over a smooth variety is a $\mathbb{T}$-torsor.
Hence, in order to produce interesting sequences of quasi-torsors we need to consider singular varieties.
Toric singularities are arguably the simplest kind of klt singularities because of their combinatorial nature.
The following theorem shows that even for varieties with toric singularities, we may produce infinite sequences of finite quasi-torsors and $\mathbb{T}$-quasi-torsors
so that infinitely many of the $\mathbb{T}$-quasi-torsors are not torsors.

\begin{introthm}
\label{thm:iteration2}
For each $n\geq 2$,
there exists an $n$-dimensional
projective variety $X^n$
with toric singularities
and an infinite sequence of morphisms:
\[
 \xymatrix@R=2em@C=2em{
X^n=X^n_0 &
X^n_1\ar[l]_-{\phi_1} &
X^n_2\ar[l]_-{\phi_2} &
X^n_3\ar[l]_-{\phi_3} & 
\dots \ar[l]_-{\phi_4} &
X^n_i\ar[l]_-{\phi_i} &
X^n_{i+1}\ar[l]_-{\phi_{i+1}}& 
\dots \ar[l]_-{i+2}&
}
\]
such that the following conditions hold:
\begin{enumerate}
    \item each $\phi_i$ is either a finite quasi-torsor or a $\mathbb{T}$-quasi-torsor, 
    \item infinitely many of the $\phi_i$'s are finite torsors, and 
    \item infinitely many of the $\phi_i$'s are $\mathbb{T}$-quasi-torsors that are not torsors.
\end{enumerate}
\end{introthm}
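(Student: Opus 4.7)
The plan is to construct $X^n$ explicitly, with a self-similar local model that permits the infinite alternation. The strategy has two ingredients: a local toric singularity $(V;v)$ designed so that the combined ``Cox ring $+$ finite cover'' operations reproduce a singularity of the same type, and a globalization of this model to a projective variety. For the local step, I would look for a rational simplicial cone $\sigma\subset N_{\rr}$ whose affine chart $V:=U_\sigma$ has class group $\Cl(V;v)\cong \zz^r\oplus F$ with $r\geq 1$ and $F$ a nontrivial finite abelian group, and which moreover satisfies: (i) the relative Cox space from Theorem~\ref{introthm:cox-space-vs-g-covers} applied to a basis of the free part is a $\mathbb{T}$-quasi-torsor $W\to V$ such that $W$ is still a toric singularity with finite class group isomorphic to $F$; (ii) the finite $F$-Galois cover $V'\to W$ is toric of formal type matching $V$, possibly after crossing with a smooth factor. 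A natural candidate is a cone whose primitive ray generators are arranged to provide iterated Veronese-like structure at both the continuous and the discrete levels, so that self-similarity becomes a statement about the scaling behavior of the underlying lattice.

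For the global construction I would embed $(V;v)$ as a torus-invariant chart of an explicit projective toric variety $X^n$ of dimension $n$, smooth outside $v$. For $n=2$ this amounts to completing the fan containing $\sigma$ by adjoining smooth cones; for $n\geq 3$ one may either iterate the construction in higher lattice rank or take an appropriate product and adjust. The tower $\{X^n_i\}$ is then defined inductively: at odd steps $\phi_{2i+1}$ is the $\mathbb{T}$-quasi-torsor furnished by Theorem~\ref{introthm:cox-space-vs-g-covers} applied to Weil divisors generating the current free part of the local class group, and at even steps $\phi_{2i}$ is the finite Galois quasi-torsor associated with the current finite part $F$. The self-similarity built into the local model guarantees the process does not terminate, and the global geometry outside $v$ stays smooth so the local picture controls everything.

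To verify the three conclusions: (1) is immediate from construction; (3) follows because, by property (i) applied iteratively, at every odd step the local class group contains a genuine $\zz^r$ summand, so the corresponding $\mathbb{T}$-quasi-torsor trivializes non-Cartier Weil divisors and is therefore not a $\mathbb{T}$-torsor; (2) follows from Theorem~\ref{thm:iteration1}, which forces all but finitely many of the (infinitely many) finite quasi-torsor steps in the sequence to be finite torsors. The main obstacle is the design of the local model in the first place: arranging the class group to regenerate a free $\zz^r$ summand after each two-step cycle is an arithmetic-combinatorial constraint on $N$ and $\sigma$, and is precisely what prevents collapse to the pure-$\mathbb{T}$ scenario of Theorem~\ref{introthm:iteration-torus-quasi-torsors}, where Cox-ring iterations necessarily stabilize after finitely many steps. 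Everything else in the argument is bookkeeping once such a self-similar cone has been produced.
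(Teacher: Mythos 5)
Your strategy has a fundamental gap: you are looking for a \emph{local} self-similar model, but for klt type singularities such a model cannot exist, and the theorem must be proved by a \emph{global} mechanism. Around a fixed klt type singularity $(V;v)$ the regional fundamental group $\hat\pi_1^{\rm reg}(V;v)$ is finite and the local class group $\Cl(V;v)$ is finitely generated; a local Cox ring strictly shrinks $\Cl$, a regional-universal quasi-\'etale cover eventually trivializes $\hat\pi_1^{\rm reg}$, and neither operation can ``regenerate'' a nontrivial class group as your steps (i)--(ii) require. This local termination is precisely what Theorem~\ref{thm:iteration3} and Theorem~\ref{introthm:factorial-cover} encode, so the cone $\sigma$ you postulate does not exist; the sentence about ``arranging the class group to regenerate a free $\zz^r$ summand after each two-step cycle'' names an impossible constraint rather than a solvable combinatorial exercise. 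Your secondary emphasis on a free $\zz^r$ summand is also unnecessary: a $\mathbb{T}$-quasi-torsor built from Weil divisors that are torsion in the class group but not Cartier already fails to be a torsor, which is the only thing conclusion (3) asks for.

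The paper instead makes essential use of a global invariant --- an infinite \'etale fundamental group --- to keep \emph{multiplying the number of singular points} rather than keeping one singular point alive. It blows up a smooth projective $Z^n$ with infinite $\hat\pi_1$ at a smooth point via an explicit toric modification introducing a single isolated toric singularity with local class group $\zz_2$ and a distinguished non-Cartier toric divisor $T^n$. An infinite tower of finite \'etale Galois covers $Y^n_i \to Y^n_{i-1}$ produces $k_0\cdots k_{i-1}$ copies of this singular point, so that ${\rm WDiv}(Y^n_i)/{\rm CaDiv}(Y^n_i)\cong\bigoplus\zz_2$ grows without bound, and each $\mathbb{T}$-quasi-torsor step takes the relative Cox space with respect to a subgroup $N_i$ chosen to miss the local class group at some of these points, so the resulting cover is never a torsor (Lemmas~\ref{lem:square-diagram}, \ref{lem:dominating-qt}, and~\ref{lem:two-qt}). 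Condition (2) is then automatic because the finite steps are \'etale by construction. Without some replacement for this global proliferation of singular points, your plan cannot produce the required infinitely many non-torsor $\mathbb{T}$-steps.
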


Note that (2) in the previous theorem is implied by
Theorem~\ref{introthm:iteration-torus-quasi-torsors}.
Thus, the importance  relies on (3).
It shows that a full generalization of Greb-Kebekus-Peternell to the case of $\mathbb{T}$-quasi-torsors and finite quasi-torsors is not feasible.
Our final statement in this subsection says that this failure can be fixed if we restrict ourselves to a special class of $\mathbb{T}$-quasi-torsors.
A $\mathbb{T}$-quasi-torsor $Y\rightarrow X$ is said to be {\em factorial} if the variety $Y$ is factorial.

\begin{introthm}
\label{thm:iteration3}
Let $X$ be a variety with klt type singularities.
Consider a sequence of morphisms
\[
 \xymatrix@R=2em@C=2em{
X=X_0 &
X_1\ar[l]_-{\phi_1} &
X_2\ar[l]_-{\phi_2} &
X_3\ar[l]_-{\phi_3} & 
\dots \ar[l]_-{\phi_4} &
X_i\ar[l]_-{\phi_i} &
X_{i+1}\ar[l]_-{\phi_{i+1}}& 
\dots \ar[l]_-{\phi_{i+2}}&
}
\]
such that each $\phi_i$ is either
a finite quasi-torsor
or a factorial $\mathbb{T}$-quasi-torsor.
Then, all but finitely many of the $\phi_i$ are torsors.
\end{introthm}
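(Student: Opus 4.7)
The plan is to combine Theorem~\ref{thm:iteration1} with the structural characterization of $\mathbb{T}$-quasi-torsors from Theorem~\ref{introthm:cox-space-vs-g-covers} to handle the factorial $\mathbb{T}$-quasi-torsors. First, applying Theorem~\ref{thm:iteration1} to the given sequence, all but finitely many of the finite quasi-torsors are torsors. By replacing $X$ with $X_N$ for some sufficiently large $N$, we may assume that every finite quasi-torsor in the (truncated) sequence is already a finite \'etale Galois cover. The task thus reduces to showing that, in this truncated sequence, all but finitely many of the factorial $\mathbb{T}$-quasi-torsors are in fact $\mathbb{T}$-torsors.

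The key observation is the following: if $Y$ is locally factorial and $\psi\colon Z \to Y$ is a $\mathbb{T}$-quasi-torsor, then $\psi$ is automatically a $\mathbb{T}$-torsor. Indeed, by Theorem~\ref{introthm:cox-space-vs-g-covers}, $\psi$ is equivariantly isomorphic to the relative spectrum $\mathrm{Spec}_Y\bigl(\bigoplus_{m \in \mathbb{Z}^k} \mathcal{O}_Y(m_1 W_1 + \cdots + m_k W_k)\bigr)$ for some Weil divisors $W_1, \ldots, W_k$ on $Y$; local factoriality of $Y$ forces every such $W_i$ to be Cartier, and a relative spectrum of this form built from Cartier divisors is \'etale-locally $Y \times \mathbb{T}$, hence a $\mathbb{T}$-torsor.

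Assume that infinitely many $\phi_i$ are factorial $\mathbb{T}$-quasi-torsors, as otherwise we are already done by the reduction above. Let $i_0$ be the smallest index for which $\phi_{i_0}$ is a factorial $\mathbb{T}$-quasi-torsor in the truncated sequence; then $X_{i_0}$ is factorial by hypothesis. We argue by induction on $j \geq i_0$ that $X_j$ is locally factorial. The inductive step splits into two cases: if $\phi_{j+1}$ is a factorial $\mathbb{T}$-quasi-torsor, then $X_{j+1}$ is factorial by hypothesis; if $\phi_{j+1}$ is a finite \'etale Galois cover, we invoke the claim that finite \'etale Galois covers preserve local factoriality in the klt type setting. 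Granting this, the observation applied at each $X_j$ with $j \geq i_0$ forces every subsequent $\mathbb{T}$-quasi-torsor $\phi_{j+1}$ to be a $\mathbb{T}$-torsor, and the proof is complete.

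The main obstacle is this propagation claim: that a finite \'etale Galois cover $\pi\colon X_{j+1} \to X_j$ of a locally factorial klt type variety $X_j$ yields a locally factorial $X_{j+1}$. The basic ingredient is that for such a Galois cover with group $G$, the transfer $D \mapsto \sum_{g \in G} g^* D$ sends Weil divisors on $X_{j+1}$ to pullbacks of Weil divisors on $X_j$, which are Cartier by local factoriality; this already yields $\mathbb{Q}$-factoriality of $X_{j+1}$. Upgrading to full local factoriality requires a finer analysis exploiting the finite generation and torsion control of local class groups of klt singularities, together with the rigidity of the Cox-theoretic description of $\mathbb{T}$-quasi-torsors under finite \'etale pullback.
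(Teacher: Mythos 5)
Your overall structure — reduce via Theorem~\ref{thm:iteration1} to a sequence where all finite quasi-torsors are \'etale torsors, then propagate factoriality forward and invoke the observation that a $\mathbb{T}$-quasi-torsor over a locally factorial variety is automatically a torsor — is the right shape, and the observation itself (via Theorem~\ref{introthm:cox-space-vs-g-covers} plus the fact that all the $W_i$ become Cartier) is correct. But you have honestly flagged the actual gap, and it is a real one: you have no proof that a finite \'etale Galois cover of a locally factorial klt type variety is again locally factorial, and this is not a known or easy fact. Your transfer argument only controls torsion in the cokernel of $\pi^*\colon \operatorname{Cl}(X_j;x)\rightarrow\operatorname{Cl}(X_{j+1};x')$, which gives $\mathbb{Q}$-factoriality, not factoriality. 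The kernel side is handled by Lemma~\ref{lem:pullback-cl} (pullback is injective), but nothing you have said rules out $\operatorname{Cl}(X_{j+1};x')$ being genuinely larger than $\pi^*\operatorname{Cl}(X_j;x)$; Example~\ref{ex:thm7-analytically-local} shows that Zariski-local factoriality is not an \'etale-local property, so the claim is not automatic.

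The paper circumvents exactly this difficulty, and this is the substantive content you are missing. Rather than inducting along the original sequence, the paper ``reorders'' it (as in the proof of Theorem~\ref{thm:iteration1}) into a parallel tower of finite torsors $X'=X'_0\leftarrow X'_1\leftarrow\cdots$, with each $X_i\rightarrow X'_i$ the relative Cox space for a subgroup $N_i\leqslant\operatorname{WDiv}(X'_i)$. Then two inputs are combined: Lemma~\ref{lem:pullback-cl} (injectivity of pullback on local class groups along finite torsors), and the constructibility of the local class group functor in the \'etale topology~\cite[Theorem~3.4.(1)]{bf84} applied to the base $X'$. Together they give an index $i_0$ past which all the pullback maps $\operatorname{Cl}(X'_i;y)\rightarrow\operatorname{Cl}(X'_{i+1};x)$ are \emph{isomorphisms}, not merely injections. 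Only after this stabilization can one argue, via Lemma~\ref{lem:factorial-qt} and Lemma~\ref{lem:square-diagram}, that surjectivity of $N_i\rightarrow\operatorname{Cl}(X'_i;x)$ (i.e.\ factoriality of $X_i$) forces surjectivity of $\phi'^*_iN_i\rightarrow\operatorname{Cl}(X'_{i+1};x')$ (i.e.\ factoriality of $X_{i+1}$). Your proposal has no analogue of this stabilization step, and without it the inductive propagation breaks down; the ``finite generation and torsion control of local class groups'' you invoke is necessary but not sufficient, and the phrase ``rigidity of the Cox-theoretic description under finite \'etale pullback'' does not identify a mechanism that would close the gap.
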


\subsection{Factorial models}
In this section, motivated by the previous statement, we study factorial covers of klt varieties. 
In~\cite[Theorem~1.5]{GKP16}, the authors prove that a variety $X$ with klt type singularities admits
a quasi-\'etale finite Galois cover
$Y\rightarrow X$ for which
$\pi_1(Y^{\rm reg})$ is isomorphic to $\pi_1(Y)$.
In particular, every \'etale cover of $Y^{\rm reg}$ extends to an \'etale cover
of $Y$.
Our next aim is to improve this result
by considering both; finite quasi-torsors and torus quasi-torsors. 
By doing so, we can also improve the local class groups of the variety $Y$ obtained by Greb, Kebekus, and Peternell.
We show that any variety with klt type singularities
is a $G$-quotient of a variety with canonical factorial singularities
for which its \'etale fundamental group
agrees with the \'etale fundamental group of its smooth locus.

\begin{introthm}\label{introthm:factorial-cover}
Let $X$ be a variety
with klt type singularities.
Then, there exists a variety $Y$ 
satisfying the following conditions:
\begin{enumerate}
    \item the natural epimorphism
    $\hat{\pi}_1(Y^{\rm reg})\rightarrow \hat{\pi}_1(Y)$ of \'etale fundamental groups
    is an isomorphism, 
    \item for every finite quasi-\'etale morphism $Y'\rightarrow Y$ the variety $Y'$ has canonical factorial singularities,
    \item $Y$ admits the action of a reductive group $G$, 
    \item the group $G$ is the extension of an algebraic torus by a finite solvable group, and
    \item the isomorphism $X\simeq Y/\!\!/G$ holds.
\end{enumerate}
In particular, $Y$ itself has canonical factorial singularities.
\end{introthm}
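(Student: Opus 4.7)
The plan is to build $Y$ by an iterative procedure alternating two constructions: the Greb--Kebekus--Peternell finite quasi-\'etale Galois cover of~\cite[Theorem 1.5]{GKP16}, which forces $\hat\pi_1(\cdot^{\rm reg})\to\hat\pi_1(\cdot)$ to be an isomorphism, and a maximal relative Cox space in the sense of Theorem~\ref{introthm:cox-space-vs-g-covers}, taken with respect to Weil divisors whose classes generate all local class groups, which forces factoriality. Both operations preserve klt type by Theorems~\ref{introthm:torus-finite-cover} and~\ref{introthm-finite-torus-cover-klt-var}, so the iteration stays inside the klt type category, and termination will be supplied by Theorem~\ref{thm:iteration3}.

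Setting $X_0:=X$, I would construct inductively a map $X_{2k+1}\to X_{2k}$ as a GKP finite Galois quasi-\'etale cover, and a map $X_{2k+2}\to X_{2k+1}$ as a maximal relative Cox space. The latter is a $\mathbb{T}$-quasi-torsor by Theorem~\ref{introthm:cox-space-vs-g-covers}, and it is \emph{factorial} by construction, since the chosen generators of the local class groups become principal after pull-back. Hence the sequence fits the hypotheses of Theorem~\ref{thm:iteration3}, so all but finitely many $\phi_i$ are torsors. Let $N$ be past the stabilization stage, chosen so that the last performed step was a Cox ring step, and set $Y:=X_N$.

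For the verification of the properties: claim~(1) holds because any finite Galois quasi-\'etale cover of $Y$ can be inserted as a continuation of the sequence and thus must be a torsor, i.e.\ \'etale, which translates to $\hat\pi_1(Y^{\rm reg})\cong\hat\pi_1(Y)$. For~(2), the last Cox ring step makes $Y$ factorial; by~(1), every finite quasi-\'etale cover $Y'\to Y$ is \'etale, and \'etale morphisms preserve factoriality, so $Y'$ is factorial; being klt type and factorial, $K_{Y'}$ is Cartier and all its discrepancies are integers in $(-1,\infty)$, hence $\geq 0$, so $Y'$ is canonical. For~(3)--(5), the composed map $Y\to X$ realizes $X$ as the GIT quotient $Y/\!\!/G$ where the identity component $G^\circ$ is the product of all the tori coming from the Cox ring steps, and $G/G^\circ$ is the finite group assembled from the Galois groups of the finite steps.

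The main obstacle is twofold. First, one must verify that the iteration terminates in the strong sense that every admissible finite or factorial torus quasi-torsor of $Y$, not merely the ones we explicitly constructed, is a torsor; this is precisely what Theorem~\ref{thm:iteration3} provides, provided that one checks at each stage that local class groups of klt type varieties are finitely generated so that the maximal relative Cox space is well defined and factorial. Second, establishing that $G/G^\circ$ is \emph{solvable} requires additional care: one should refine the GKP step at each stage so that the finite Galois groups appearing are abelian (for instance by first exhausting the abelian quasi-\'etale covers via finite Cox ring constructions, and only then invoking GKP on the residual part), so that $G/G^\circ$ arises as a tower of abelian extensions and is therefore solvable.
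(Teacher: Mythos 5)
Your overall strategy — alternating GKP finite quasi-\'etale covers with maximal relative Cox spaces and invoking Theorem~\ref{thm:iteration3} to terminate — matches the paper's implicit construction, and your verification of (1) is the same. The gap is in your argument for (2). You assert that ``\'etale morphisms preserve factoriality,'' i.e.\ that if $Y$ is locally factorial and $Y'\rightarrow Y$ is finite \'etale then $Y'$ is locally factorial. This is unjustified and is in fact false in general: for a local \'etale extension $\mathcal{O}_{Y,y}\rightarrow\mathcal{O}_{Y',y'}$ the induced map on local class groups is injective but need not be surjective, so $\Cl(Y';y')$ can be strictly larger than $\Cl(Y;y)=0$. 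Example~\ref{ex:thm7-analytically-local} in the paper exhibits exactly the relevant phenomenon: the hypersurface $\{xy+zw+z^3+w^3=0\}$ is factorial at the origin but not analytically factorial, so factoriality is not an \'etale-local (much less formal-local) property, and an argument for (2) resting on this claim will not go through.

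The paper instead proves (2) by contradiction, leveraging the stabilization property directly: if $Y'\rightarrow Y$ is finite quasi-\'etale (hence, by (1), \'etale) and $Y'$ fails to be factorial at some $y'$, one passes to the Galois closure $Y''\rightarrow Y$ of $Y'\rightarrow Y$, which is finite \'etale Galois by~\cite[Theorem 3.7]{GKP16}; Lemma~\ref{lem:pullback-cl} gives an injection $\Cl(Y';y')\hookrightarrow\Cl(Y'';y'')$, so $Y''$ is also not factorial; then the relative Cox space of $Y''$ with respect to a subgroup of $\WDiv(Y'')$ surjecting onto $\WDiv(Y'')/\CaDiv(Y'')$ is a \emph{factorial} $\mathbb{T}$-quasi-torsor over the finite torsor $Y''$ of $Y$ which is not a torsor, contradicting the terminality of $Y$ guaranteed by Theorem~\ref{thm:iteration3}. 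Note that the direction of the class-group inequality used here runs exactly opposite to what your \'etale-preservation claim would require. Finally, your sketch for making $G/G^{\circ}$ solvable — first exhausting abelian covers, then running GKP on the residual part — does not evidently produce a solvable Galois group for the residual step, and would need a more careful justification than given.
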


In general, this factorial variety is highly non-unique.
The previous theorem can be regarded as a generalization of~\cite[Theorem~1.5]{GKP16}. Theorem~\ref{introthm:factorial-cover}.(1) follows from~\cite[Theorem~1.1]{GKP16}.
An equivalent statement of the latter does not hold for combinations
of finite and torus covers (c.f.~Theorem~\ref{thm:iteration2}).
However, the statement is still valid if we restrict ourselves to finite quasi-torsors 
and factorial torus quasi-torsors.
Thus, we may still apply Theorem~\ref{thm:iteration3}.
We also observe that the singularities of the variety $Y$ 
produced in the previous theorem can not be improved
by taking finite quasi-\'etale covers and relative Cox rings.
Indeed, every finite quasi-torsor
or torus quasi-torsor over $Y$ is a torsor.

A classic topic in algebraic geometry 
is deciding when a variety which is locally a quotient
is indeed globally a quotient.
Fulton asked whether varieties with
finite quotient singularities are  finite quotients of   smooth varieties.
In~\cite{EHKV01}, the authors prove that a variety with 
finite quotient singularities is the quotient of  a smooth variety
by a linear algebraic group.
In~\cite{KV04}, it is proved that a variety with finite quotient singularities admits a finite flat surjection from a smooth variety.
In~\cite[Theorem 1.2]{GS15}, the authors show that a variety with finite abelian quotient singularities that is globally the quotient of a smooth variety by a torus is globally the quotient of a smooth variety by a finite group.
In this last paper, the language of stacks and Cox rings is used.
In this direction, we prove the following positive result 
in the case of locally toric singularities.

\begin{introthm}\label{introthm:toric-quot}
Let $X$ be a variety with locally toric singularities. 
Then, $X$ admits a torus quasi-torsor
which is a smooth variety.
In particular, $X$ is the quotient of a smooth variety by the action of a torus.
\end{introthm}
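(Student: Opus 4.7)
The plan is to realize $X$ as a quotient of a smooth variety by a torus via the relative Cox space construction of Theorem~\ref{introthm:cox-space-vs-g-covers}, choosing Weil divisors adapted to the local toric charts and then verifying smoothness locally.

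First, by hypothesis $X$ admits a Zariski-open cover $\{U_\alpha\}$ in which each $U_\alpha$ is isomorphic to an affine toric variety $U_{\sigma_\alpha}$. On each such chart, the rays of $\sigma_\alpha$ correspond to finitely many torus-invariant prime divisors that generate $\Cl(U_\alpha)$. Extending these to global Weil divisors on $X$ by Zariski closure and using quasi-compactness, I extract a finite collection $W_1,\dots,W_k$ of prime Weil divisors on $X$ whose germs generate $\Cl(\oo_{X,x})$ at every closed point $x\in X$. I then form the relative Cox space
\[
Y := \Spec_X\left(\bigoplus_{(m_1,\dots,m_k)\in\zz^k} \oo_X(m_1 W_1+\dots+m_k W_k)\right),
\]
equipped with the $\TT:=\G_m^k$-action coming from the $\zz^k$-grading. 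By Theorem~\ref{introthm:cox-space-vs-g-covers} and the discussion preceding it, the morphism $Y\to X$ is a $\TT$-quasi-torsor, and a $\TT$-torsor precisely over the locus where every $W_i$ is Cartier.

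The heart of the argument is verifying smoothness of $Y$ by a local computation on each toric chart. Over $U_\alpha$, the restriction $Y|_{U_\alpha}$ is the relative Cox space of $U_{\sigma_\alpha}$ with respect to a collection of Weil divisors that, by construction, contains all the toric rays of $\sigma_\alpha$. The classical toric Cox construction then identifies the relevant graded ring with a polynomial ring, so that its $\Spec$ is the affine space $\mathbb{A}^{r_\alpha}$, and any additional divisors in our list contribute only free $\G_m$-factors. Hence $Y|_{U_\alpha}$ is smooth, and gluing yields smoothness of $Y$; the final statement $X\simeq Y/\!\!/\TT$ then follows from the quasi-torsor property.

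The main obstacle is ensuring that the global construction produces a genuine torus action with smooth total space even when the local class groups carry torsion. Concretely, one must enlarge the divisor collection sufficiently so that the grading group acts with torsion-free generic stabilizer and any torsion in local class groups is absorbed into $\G_m$-factors of $\TT$ rather than into finite stabilizers; equivalently, the local model on each chart must be identified with honest affine space and not a finite quotient of it. Verifying that such a uniform choice is possible Zariski-globally—rather than only étale-locally or up to an auxiliary finite cover—is the key technical point, and is where the hypothesis of locally (as opposed to merely étale-locally) toric singularities is essential.
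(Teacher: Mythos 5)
Your argument is essentially correct and reaches the same conclusion by a more explicit route than the paper. The paper proceeds abstractly: it observes that locally toric singularities are of klt type, invokes the finite generation of ${\rm WDiv}(X)/{\rm CaDiv}(X)$ (Theorem~\ref{thm:wdiv-cdiv-klt-type}) to choose a free subgroup $N$ surjecting onto it, forms the relative Cox space $Y$, which is factorial by Lemma~\ref{lem:factorial-qt}, then closes with the two structural facts that the relative Cox space of a locally toric variety is again locally toric and that a factorial toric singularity is smooth. You instead choose the generators of $N$ adapted to a finite cover by toric charts and compute the local model of $Y$ directly, identifying it with a Laurent polynomial algebra over the toric coordinate ring. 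What your approach buys is concreteness: you never need the klt machinery at all, only the computation that $\bigoplus_{\vec m\in\mathbb Z^r} H^0(U_\sigma,\mathcal{O}(\sum m_i D_i))\simeq \kk[x_1,\dots,x_r]\otimes\kk[M]$ for $D_1,\dots,D_r$ the toric prime divisors of $U_\sigma$, which is elementary. What the paper's route buys is robustness: it dispenses with any case analysis about which divisors restrict to rays versus general torus-invariant divisors, leaving the factoriality criterion to do all the work.

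Two small points deserve tightening. First, your claim that "any additional divisors in our list contribute only free $\G_m$-factors" is slightly imprecise: after replacing $W_j|_{U_\alpha}$ by a torus-invariant representative via Lemma~\ref{lem:torus-inv-lin-equiv-div}, such a divisor is an integer combination of the rays, not necessarily Cartier, so it does not simply add a trivial $\G_m$-direction. Rather, the resulting surjection $\mathbb Z^k\twoheadrightarrow\mathbb Z^r=\langle D_1,\dots,D_r\rangle$ splits (any surjection onto a free group splits), and the $\mathbb Z^k$-graded ring is the $\mathbb Z^r$-graded one tensored with the group algebra of the kernel, again a Laurent polynomial ring. Second, the concern you raise in your last paragraph about torsion is already resolved by the construction itself: because $N$ is taken to be free, the acting group $\Hom(N,\G_m)$ is always a genuine torus, and the local model just computed is smooth irrespective of torsion in $\Cl(U_\sigma)$. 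This is exactly the reason the relative Cox construction avoids the character quotient of Definition~\ref{def:local-Cox-ring}; see also Example~\ref{ex:torsion-quotient}.
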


In the previous theorem, 
a singularity $x\in X$ is said to be 
locally toric if 
there exists a toric variety $T$ and a closed invariant point $t\in T$ such that
$X_x\simeq T_t$.
Here, $X_x$ (resp. $T_t$) is the spectrum of the local ring $\mathcal{O}_{X,x}$ (resp. $\mathcal{O}_{T,t}$).
The previous theorem can be regarded as a generalization of the fact that 
a toric variety is the quotient 
of an open subset of $\mathbb{A}^n$ by a torus action.
A point $x\in X$ is said to be {\em formally toric} if
there exists a toric variety $T$
and a closed invariant point $t\in T$
such that 
$\hat{X}_x\simeq \hat{T}_t$.
The statement of the previous theorem does not hold if we replace the condition
on locally toric singularities
with formally toric singularities (see Example~\ref{ex:thm7-analytically-local}).

\subsection{Normal singularities}

Throughout the introduction, 
we focused on varieties with klt type singularities.
In this last part, we discuss what class of singularities is the optimal class for which the previous theorems work.
We recall the following theorem due to Stibitz (see~\cite[Theorem 1]{Sti17}).

\begin{introthm}
Let $X$ be a normal variety. 
The following conditions are equivalent:
\begin{enumerate}
\item Every sequence of finite quasi-torsors 
    \[
 \xymatrix@R=2em@C=2em{
X=X_0 &
X_1\ar[l]_-{\phi_1} &
X_2\ar[l]_-{\phi_2} &
X_3\ar[l]_-{\phi_3} & 
\dots \ar[l]_-{\phi_4} &
X_i\ar[l]_-{\phi_i} &
X_{i+1}\ar[l]_-{\phi_{i+1}}& 
\dots \ar[l]_-{\phi_{i+2}}&
}
\]
is eventually a sequence of torsors.
\item For every point $x\in X$ the image of the homomorphism
$\hat{\pi}_1^{\rm reg}(X;x)\rightarrow \hat{\pi}_1^{\rm reg}(X)$
is finite.
\end{enumerate}
\end{introthm}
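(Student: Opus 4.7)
My plan is to translate the theorem into a statement about chains of open subgroups of $\Gamma := \hat{\pi}_1^{\rm reg}(X)$, and then reduce the equivalence to the finiteness of the images of the local étale fundamental groups in $\Gamma$.

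First, I would identify any sequence of finite (Galois) quasi-torsors over $X$ with a descending chain $\Gamma = H_0 \supseteq H_1 \supseteq H_2 \supseteq \dots$ of open subgroups (normal in $\Gamma$ after passing to normal cores), where $X_i$ is the finite quasi-étale cover of $X$ determined by $H_i$. For $x \in X$, write $\Gamma_x \subseteq \Gamma$ for the image of $\hat{\pi}_1^{\rm reg}(X;x)$. Base-changing the cover $X_i \to X$ to a strict henselization at $x$ and inspecting connected components shows that the local fundamental group of $X_i$ at any point $y$ above $x$ equals $\Gamma_x \cap H_i$ up to conjugation in $H_i$. Hence $\phi_{i+1}$ is a torsor if and only if $\Gamma_x \cap H_i = \Gamma_x \cap H_{i+1}$ for every $x \in X$.

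For $(1)\Rightarrow(2)$ I argue by contrapositive. If $\Gamma_{x_0}$ is infinite for some $x_0 \in X$, then as an infinite profinite group it admits a strictly descending chain of open subgroups $U_0 \supsetneq U_1 \supsetneq \dots$. Since the subspace topology on $\Gamma_{x_0}$ inherited from $\Gamma$ agrees with its own profinite topology, each $U_i$ extends to an open subgroup $H_i \subseteq \Gamma$ with $H_i \cap \Gamma_{x_0} = U_i$; intersecting consecutive members yields a descending chain $H_0 \supseteq H_1 \supseteq \dots$. The corresponding tower of quasi-torsors satisfies $\Gamma_{x_0} \cap H_i \supsetneq \Gamma_{x_0} \cap H_{i+1}$ for every $i$, so by the criterion no $\phi_{i+1}$ is a torsor, violating~(1).

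For $(2)\Rightarrow(1)$, assume every $\Gamma_x$ is finite. Given a sequence $\{H_i\}$ as above, suppose for contradiction that infinitely many $\phi_{i+1}$ fail to be torsors, and pick witnesses $x_i \in X$ with $\Gamma_{x_i} \cap H_i \supsetneq \Gamma_{x_i} \cap H_{i+1}$. The central technical step is to produce a finite stratification $X = \bigsqcup_{j=1}^{N} S_j$ by locally closed subsets such that on each $S_j$ all the groups $\Gamma_x$ are conjugate in $\Gamma$ to a common finite subgroup $F_j \subseteq \Gamma$. Granting this, pigeonhole forces infinitely many witnesses $x_i$ to lie in a single stratum $S_{j_0}$; since each $H_i$ is normal in $\Gamma$, conjugating the witness inequalities yields $F_{j_0} \cap H_i \supsetneq F_{j_0} \cap H_{i+1}$ for infinitely many indices, producing an infinite strictly descending chain of subgroups of the finite group $F_{j_0}$---a contradiction.

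The principal obstacle is the stratification step: showing that $x \mapsto \Gamma_x$ takes only finitely many values up to $\Gamma$-conjugacy on the Noetherian variety $X$. The expected input is a specialization statement---for $x \rightsquigarrow x'$ in $X$, there should be a natural inclusion $\Gamma_x \hookrightarrow \Gamma_{x'}$ of subgroups of $\Gamma$ up to conjugation, so that $\Gamma_x$ grows under specialization. Combined with Noetherianness of $X$ and the standing finiteness of each $\Gamma_x$, this should yield the desired stratification by induction on the dimension of closed subvarieties. Making the specialization statement precise in the generality of normal varieties---in particular ensuring compatibility with the definition of $\hat{\pi}_1^{\rm reg}(X;x)$ at non-closed points---is the step that demands the most care; once it is in hand, both implications follow from the group-theoretic translation described above.
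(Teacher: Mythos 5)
This statement is not proved in the paper at all: it is recalled and attributed to Stibitz, with an explicit citation to \cite[Theorem~1]{Sti17}, so there is no internal proof to compare against. Your proposal is a blind reinvention of a Stibitz-style argument, and the overall architecture (translate to descending chains of open subgroups of $\Gamma=\hat\pi_1^{\rm reg}(X)$, detect (non-)\'etaleness via the images $\Gamma_x$ of the regional fundamental groups, and for the hard direction appeal to a finite stratification of $X$ on which $\Gamma_x$ is locally constant up to conjugacy, followed by pigeonhole and Noetherian descent) is very much in the spirit of what the actual proof must do. The paper itself uses an analogous stratification step elsewhere in its own proofs (e.g.\ in the proof of Theorem~\ref{thm:iteration3} it invokes \cite[Theorem 3.4.(1)]{bf84} for constructibility of local class groups, and in the proof of Theorem~\ref{introthm:optimal-mixed} it invokes \cite[Theorem 3.1.(4)]{GKP16}); so your identification of the stratification as the central input, rather than a minor technicality, is accurate.

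That said, two gaps remain, one acknowledged and one not. The acknowledged one is the stratification step itself: you state it as what needs to be shown and gesture at a specialization principle but do not establish it, and this is precisely the piece of technology that distinguishes the theorem from elementary group theory. The unacknowledged one concerns normality. The subgroups $H_i$ attached to the tower $X_i\to X$ satisfy $H_{i+1}\trianglelefteq H_i$ (each $\phi_i$ is Galois) but are in general \emph{not} normal in $\Gamma$, since the composite $X_i\to X$ need not be Galois. You invoke ``passing to normal cores'' to justify treating the $H_i$ as normal, but this changes the tower: if $K_i:=\bigcap_{g\in\Gamma}gH_ig^{-1}$, the \'etaleness of $Z_{i+1}\to Z_i$ (the covers for $K_i$) is governed by $\Gamma_x\cap K_i\subseteq K_{i+1}$, which is not equivalent to the condition $\Gamma_x^{g}\cap H_i\subseteq H_{i+1}$ (for all $g$) governing \'etaleness of $\phi_{i+1}$; indeed the Galois closure $Z_i\to X_i$ can ``smooth out'' exactly the singular points of $X_i$ where $\phi_{i+1}$ ramifies. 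Concretely, in your $(2)\Rightarrow(1)$ step you write ``since each $H_i$ is normal in $\Gamma$, conjugating the witness inequalities yields $F_{j_0}\cap H_i\supsetneq F_{j_0}\cap H_{i+1}$''; without normality the conjugating element $\gamma_{x_i}$ sends $H_i$ to a possibly different conjugate $H_i^{\gamma_{x_i}}$, and the resulting statements are about a \emph{twisted} family $F_{j_0}\cap H_i^{\gamma_{x_i}}$ that is not a descending chain of subgroups of the finite group $F_{j_0}$. You need either to show that the witnesses can be chosen coherently along a single compatible branch of the tower (where the local groups do form a genuine descending chain in a fixed finite $\Gamma_x$), or to give a pigeonhole argument that does not rely on replacing $H_i$ by its normal core. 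As written, the $(2)\Rightarrow(1)$ implication does not close.
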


The group $\hat{\pi}_1^{\rm reg}(X)$ is the \'etale fundamental group of the smooth locus of $X$. 
On the other hand, $\hat{\pi}_1^{\rm reg}(X;x)$, called \emph{\'etale regional fundamental group}, is the profinite completion of the fundamental group of the smooth locus around the singularity (see, e.g.,~\cite{Bra19}). The regional fundamental group
of klt type singularities is finite, so the previous theorem recovers~\cite[Theorem 1.1]{GKP16}.
Motivated by the previous result, we prove the following theorem regarding $\mathbb{T}$-quasi-torsors.

\begin{introthm}\label{introthm:optimal-T}
Let $X$ be a normal variety. 
The following conditions are equivalent:
\begin{enumerate}
\item Every sequence of $\mathbb{T}$-quasi-torsors 
    \[
 \xymatrix@R=2em@C=2em{
X=X_0 &
X_1\ar[l]_-{\phi_1} &
X_2\ar[l]_-{\phi_2} &
X_3\ar[l]_-{\phi_3} & 
\dots \ar[l]_-{\phi_4} &
X_i\ar[l]_-{\phi_i} &
X_{i+1}\ar[l]_-{\phi_{i+1}}& 
\dots \ar[l]_-{\phi_{i+2}}&
}
\]
is eventually a sequence of torsors.
\item For every point $x\in X$ the group
${\rm Cl}(X;x)$ is finitely generated.
\end{enumerate}
\end{introthm}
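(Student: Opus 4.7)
Both implications hinge on the identification of $\mathbb{T}$-quasi-torsors with relative Cox spaces given by Theorem~\ref{introthm:cox-space-vs-g-covers}. If $\phi\colon Y \to X$ is written as $Y = \Spec_X \bigoplus_{m \in \zz^k} \oo_X(m_1 W_1 + \cdots + m_k W_k)$, then for every $y \in Y$ above $x \in X$ there is a natural exact sequence
\[
\zz^k \longrightarrow \Cl(X;x) \longrightarrow \Cl(Y;y) \longrightarrow 0,
\]
whose first arrow sends $(m_1,\dots,m_k) \mapsto \sum_j m_j [W_j]_x$. Consequently $\phi$ is a $\mathbb{T}$-torsor at $y$ precisely when all classes $[W_j]_x$ vanish, equivalently when $\Cl(X;x)\to \Cl(Y;y)$ is an isomorphism; when $\phi$ fails to be a torsor at some point above $x$ the surjection is a strict quotient.

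For $\neg(2) \Rightarrow \neg(1)$, suppose $\Cl(X;x_0)$ is not finitely generated. I build inductively a chain $\phi_i\colon X_i \to X_{i-1}$ of $\mathbb{G}_m$-quasi-torsors together with points $x_i \in X_i$ mapping to $x_0$. Given $X_{i-1}$ and $x_{i-1}$ with $\Cl(X_{i-1};x_{i-1})$ still infinitely generated, choose a Weil divisor $W$ on $X_{i-1}$ whose class is nonzero in $\Cl(X_{i-1};x_{i-1})$, and set $X_i := \Spec_{X_{i-1}} \bigoplus_{m \in \zz} \oo_{X_{i-1}}(mW)$. Then $\phi_i$ is not a torsor at $x_{i-1}$, while $\Cl(X_i;x_i) \cong \Cl(X_{i-1};x_{i-1})/\langle [W]\rangle$ remains infinitely generated, since the quotient of a non-finitely-generated abelian group by a cyclic subgroup is still not finitely generated. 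The sequence so produced contradicts~(1).

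For $(2)\Rightarrow (1)$, fix a sequence $(\phi_i)$ and first carry out a pointwise analysis. For any $x \in X$ and any compatible system of preimages $x_i \in X_i$, iterating the local formula yields a chain of surjections
\[
\Cl(X;x) \twoheadrightarrow \Cl(X_1;x_1) \twoheadrightarrow \Cl(X_2;x_2) \twoheadrightarrow \cdots,
\]
whose $i$-th arrow is strict whenever $\phi_i$ fails to be a torsor at $x_{i-1}$. Since $\Cl(X;x)$ is finitely generated and hence Noetherian as an abelian group, this chain stabilizes at some index $j(x) < \infty$, after which $\phi_i$ is a torsor at every preimage of $x$.

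The main obstacle is promoting this pointwise stabilization to a uniform $j$ that works for all $x \in X$ simultaneously. I carry out a Noetherian induction on the non-torsor locus in $X$. For each $i$ the non-torsor locus $N_i \subseteq X_{i-1}$ of $\phi_i$ is closed of codimension at least two, and the closure $\bar N_i \subseteq X$ of its image under $\pi_i = \phi_1\circ\cdots\circ\phi_{i-1}$ is closed. Noetherianity of the topological space $X$ forces the ascending chain $\bar N_1 \subseteq \bar N_1 \cup \bar N_2 \subseteq \cdots$ to stabilize at some closed subset $\Sigma \subseteq X$ containing every $\bar N_i$. Applying the pointwise statement at each of the finitely many generic points of $\Sigma$ produces an index beyond which each $\bar N_i$ is a proper closed subset of $\Sigma$. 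Iterating this reduction on the resulting strictly decreasing chain of closed subsets terminates after finitely many steps, yielding the required uniform $j$.
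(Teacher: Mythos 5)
Your proposal is correct in broad outline, but it takes a genuinely different route from the paper in the direction $(2) \Rightarrow (1)$, and that route requires more work to make rigorous.

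The $\neg(2) \Rightarrow \neg(1)$ direction is essentially the paper's argument presented in a "chained-up" form: the paper picks divisors $W_1, W_2, \dots$ on $X$ with $N_k := \langle W_1,\dots,W_k\rangle$ having strictly increasing image in $\Cl(X;x_0)$, forms the quasi-torsors $X_k \rightarrow X$ associated to $N_k$, and uses Lemma~\ref{lem:dominating-qt} and Lemma~\ref{lem:two-qt}(3) to assemble these into a chain with no torsors. Your version chooses the new divisor on $X_{i-1}$ rather than on $X$, which is equivalent modulo the exact sequence $\zz^k \rightarrow \Cl(X;x) \rightarrow \Cl(Y;y) \rightarrow 0$ you assert for relative Cox spaces. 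That exact sequence is plausible and is implicit in Lemma~\ref{lem:two-qt} and Lemma~\ref{lem:factorial-qt}, but it is not stated in the paper and you should justify it (in particular the surjectivity of the pullback $\Cl(X;x)\twoheadrightarrow \Cl(Y;y)$, which uses that a $\mathbb{T}$-quasi-torsor is a relative Cox space, hence every $\mathbb{T}$-invariant divisor on $Y$ descends in codimension one, plus Lemma~\ref{lem:torus-inv-lin-equiv-div}).

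For $(2) \Rightarrow (1)$ the paper takes a shorter and cleaner path. It invokes Theorem~\ref{thm:wdiv-cdiv}, which asserts that $\WDiv(X)/\CDiv(X)$ embeds into a finite direct sum of local class groups; under hypothesis $(2)$ this global group is therefore finitely generated, and then the argument of Theorem~\ref{introthm:iteration-torus-quasi-torsors} applies verbatim: compose the $\phi_i$ down to $X$ to get an increasing chain $N_1 \leqslant N_2 \leqslant \cdots$ of subgroups of $\WDiv(X)$ (via Lemma~\ref{lem:two-qt}), whose images in $\WDiv(X)/\CDiv(X)$ stabilize, so by Lemma~\ref{lem:two-qt}(3) the $\phi_i$ are eventually torsors. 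Your version replaces this with a pointwise stabilization of class-group surjections (Noetherian abelian group) followed by a Noetherian induction on the non-torsor loci in $X$. This is essentially re-deriving the Noetherian-induction content of Theorem~\ref{thm:wdiv-cdiv} on the fly rather than quoting it; the ideas are parallel, and Theorem~\ref{thm:wdiv-cdiv} is itself proved by Noetherian induction using Koll\'ar's Proposition~\ref{prop:kollar-Cartier}. Your induction as written, however, is sketchier than the paper's proof of that theorem: you need Chevalley constructibility to know that the generic point of each component of $\bar N_i$ actually lies in $\pi_{i-1}(N_i)$ before you can conclude $\bar N_i \subsetneq \Sigma$; you need to observe that hypothesis $(2)$ applies at non-closed points (which follows since $\Cl(X;\eta)$ is a quotient of $\Cl(X;x)$ for any closed $x$ in the closure of $\eta$); and you need a precise explanation of why pointwise stabilization at the distinguished system of preimages controls the entire fiber (which rests on $N_i$ being closed, $\mathbb{T}$-invariant, and the distinguished point lying in the closure of every orbit in its fiber). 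With these steps spelled out your approach works, but the paper's route through Theorem~\ref{thm:wdiv-cdiv} avoids all of this by globalizing the finite-generation hypothesis up front.
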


Due to Theorem~\ref{thm:iteration2}, we know that the similar statement for finite quasi-torsors and torus quasi-torsors fails, even for toric singularities. 
 However, in view of Theorem~\ref{thm:iteration3}, we can expect a similar statement to hold for finite quasi-torsors and factorial $\mathbb{T}$-quasi-torsors. 
In order to state the following theorem, we need to introduce the concept of {\em partial quasi-\'etale Henselizations}.

\begin{introdef}
{\em 
Let $X$ be an algebraic variety and $x\in X$ be a point. The {\em partial quasi-\'etale Henselization} of $X$ at $x$, denoted by $X^{ph}_x$, is the spectrum of the colimit of all 
quasi-\'etale covers $\mathcal{O}_{X,x}\rightarrow R$ that extend to quasi-\'etale covers of $X$ itself.
}
\end{introdef} 
With the previous definition, we can 
state the theorem that describes the optimal class of singularities
for which every sequence of
finite quasi-torsors and factorial  $\mathbb{T}$-quasi-torsors is eventually \'etale.

\begin{introthm}\label{introthm:optimal-mixed}
Let $X$ be a normal variety. 
The following conditions are equivalent:
\begin{enumerate}
\item Every sequence of
finite quasi-torsors and 
factorial $\mathbb{T}$-quasi-torsors 
    \[
 \xymatrix@R=2em@C=2em{
X=X_0 &
X_1\ar[l]_-{\phi_1} &
X_2\ar[l]_-{\phi_2} &
X_3\ar[l]_-{\phi_3} & 
\dots \ar[l]_-{\phi_4} &
X_i\ar[l]_-{\phi_i} &
X_{i+1}\ar[l]_-{\phi_{i+1}}& 
\dots \ar[l]_-{\phi_{i+2}}&
}
\]
is eventually a sequence of torsors.
\item For every point $x\in X$, the following two conditions are satisfied:
\begin{enumerate}
    \item[(a)] The image $\hat{\pi}_1^{\rm reg}(X;x)\rightarrow \hat{\pi}_1^{\rm reg}(X)$ is finite, and
    \item[(b)] the Class group ${\rm Cl}(X^{ph}_x)$ is finitely generated.
\end{enumerate}
\end{enumerate}
\end{introthm}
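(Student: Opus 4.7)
The plan is to reduce the mixed statement to its two pure analogues---Stibitz's theorem for finite quasi-torsors and Theorem~\ref{introthm:optimal-T} for $\mathbb{T}$-quasi-torsors---with Theorem~\ref{thm:iteration3} serving as the template for separating the two types of covers along a mixed sequence. Throughout, the structural Theorem~\ref{introthm:cox-space-vs-g-covers} is used to present every factorial $\mathbb{T}$-quasi-torsor as the relative Cox space attached to a finitely generated subgroup of ${\rm WDiv}(X_i)$ whose image in the local class group records its non-triviality.

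For $(1)\Rightarrow(2)$ I argue contrapositively at a fixed point $x$. If (a) fails, Stibitz's theorem already supplies an infinite sequence of finite quasi-torsors that is not eventually torsorial; this is of the form demanded in (1). If (b) fails, then ${\rm Cl}(X^{ph}_x)$ is not finitely generated, so one can choose a strictly increasing chain of finitely generated subgroups, each realized by divisor classes on some quasi-\'etale cover appearing in the colimit defining $X^{ph}_x$. Interleaving those quasi-\'etale covers with the relative Cox spaces of Theorem~\ref{introthm:cox-space-vs-g-covers} built from a new batch of divisors at each step produces a mixed sequence in which infinitely many $\mathbb{T}$-quasi-torsors are non-torsorial. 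Factoriality of each Cox space is arranged by enlarging the chosen generating set locally at $x$, which is harmless since the obstruction being measured lies in the class group.

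For $(2)\Rightarrow(1)$ I fix a sequence $(\phi_i)$ and compatible preimages $x_i\in X_i$. By Theorem~\ref{introthm:cox-space-vs-g-covers}, each factorial $\mathbb{T}$-quasi-torsor $\phi_{i+1}$ kills a finitely generated subgroup of the class group of the relevant quasi-\'etale Henselization, and is a torsor precisely when its defining divisors are already locally Cartier. Condition (b), transported along the preceding finite quasi-\'etale covers, therefore forces at most finitely many factorial $\mathbb{T}$-quasi-torsors in the sequence to be non-torsorial. Once past that stabilization, the sequence is essentially a sequence of finite quasi-torsors interspersed with $\mathbb{T}$-torsors, and condition (a) together with Stibitz's theorem ensures that all but finitely many of these remaining finite quasi-torsors are torsors.

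The main obstacle is the coupling between the two invariants: a finite quasi-\'etale cover can alter the local class group controlling (b), while a factorial $\mathbb{T}$-quasi-torsor can in principle modify the smooth locus relevant to (a). To handle this I recast both conditions as statements about the single limit object $X^{ph}_x$, functorial under finite quasi-\'etale pullback, and use that a factorial $\mathbb{T}$-quasi-torsor induces the trivial map on $\hat{\pi}_1^{\rm reg}$, so that condition (a) is invariant under such covers. With this dictionary in place, the pair consisting of the image of $\hat{\pi}_1^{\rm reg}(X;x)$ in $\hat{\pi}_1^{\rm reg}(X)$ and the group ${\rm Cl}(X^{ph}_x)$ strictly decreases along any non-torsor step of the sequence, and finiteness of the first component combined with Noetherianity (finite generation) of the second closes the argument.
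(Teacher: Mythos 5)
Your overall plan --- reduce to Stibitz's theorem together with Theorem~\ref{introthm:optimal-T}, using Theorem~\ref{thm:iteration3} as template --- is the same as the paper's, and the contrapositive argument for $(1)\Rightarrow(2)$ (Stibitz for the $\pi_1$ obstruction, a chain of relative Cox spaces when ${\rm Cl}(X^{ph}_x)$ is not finitely generated) goes in the same direction as the paper. There are, however, two genuine gaps in your sketch of $(2)\Rightarrow(1)$. The decoupling claim that ``a factorial $\mathbb{T}$-quasi-torsor induces the trivial map on $\hat{\pi}_1^{\rm reg}$'' is incorrect: by the exact sequence from \cite[Proof of Proposition 5.1]{BM21} quoted in the proof of Theorem~\ref{thm:iteration1}, the induced map $\pi_1^{\rm reg}(Y)\to\pi_1^{\rm reg}(X)$ is surjective with kernel a quotient of $\zz^{\dim\mathbb{T}}$, not trivial. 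What is actually used is this surjectivity, which allows the commuting-square rearrangement pushing a finite quasi-torsor of $X_i$ down to $X_{i-1}$; your ``pair strictly decreases'' heuristic should be replaced by that explicit rearrangement.

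Second, and more substantively, the proof of Theorem~\ref{thm:iteration3} relies on the constructibility of the local class group functor in the \'etale topology, via \cite[Theorem 3.4.(1)]{bf84}. In the klt type setting this is available for free, but under hypothesis (b) alone the paper has to re-verify it: Lemma~\ref{lem:rat1-sing-cover} is used to show that every local class group ${\rm Cl}(X;x)$ is finitely generated, from which one deduces $R^1 f_*\mathcal{O}_{X'}=0$ for a resolution $f$ and then invokes \cite[Theorem 3.1.(4)]{GKP16}. Your sketch simply asserts that condition (b), transported along the preceding finite quasi-\'etale covers, forces eventual stabilization of the $\mathbb{T}$-quasi-torsors --- but this assertion is precisely where the constructibility is needed (to guarantee the pulled-back $N_i$ surjects onto the new local class groups, so factoriality propagates), and that step is absent from your argument. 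A subsidiary issue on the $(1)\Rightarrow(2)$ side is that ``enlarging the chosen generating set locally at $x$'' to arrange factoriality of the Cox spaces requires the intermediate local class groups to be finitely generated; this needs an explicit justification rather than the parenthetical ``which is harmless.''
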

The proofs of Theorem~\ref{introthm:optimal-T}
and Theorem~\ref{introthm:optimal-mixed} are quite similar to those of the statements for klt type singularities.
We will prove these statements in Subsection~\ref{subsec:normal-singularities}.

\subsection*{Acknowledgements}
The authors would like to thank Olivier Benoist and Ofer Gabber for helpful discussions.
The authors would like to thank Burt Totaro for 
providing Example~\ref{ex:semisimple}.

\section{Preliminaries}

In this section, 
we recall the definitions
of the singularities
of the minimal model program.
We also recall the definition
of $G$-quotients
and $G$-quasi-torsors,
and prove some 
preliminary results.
We work over an algebraically closed field $\kk$ of characteristic zero.
All the considered varieties
are normal
unless stated otherwise.
A {\em reductive group} $G$ is a linear algebraic group $G$ for which
the unipotent radical is trivial.

\subsection{Singularities of the MMP}
In this subsection, we recall the definitions of the singularities of the MMP.

\begin{definition}
{\em 
A {\em log pair} $(X,\Delta)$ consists of the data of a quasi-projective variety $X$
and an effective $\qq$-divisor
$\Delta$ for which
$K_X+\Delta$ is $\qq$-Cartier.
Let $x\in X$ be a closed point.
We write $(X,\Delta;x)$ for the log pair $(X,\Delta)$ around $x$. 
When we write statements about $(X,\Delta;x)$, we mean that such statement holds for $(X,\Delta)$
on a sufficiently small neighborhood of $x$.
}
\end{definition}

\begin{definition}
{\em 
Let $(X,\Delta)$ be a log pair.
Let $\pi\colon Y\rightarrow X$ be a projective birational morphism 
from a normal quasi-projective variety $Y$.
Let $E\subset Y$ be a prime divisor.
We let $\Delta_Y$ be the strict transform of $\Delta$ on $Y$.
We fix canonical divisors $K_Y$ on $Y$
and $K_X$ on $X$ for which 
$\pi_* K_Y=K_X$.
The {\em log discrepancy} of
$(X,\Delta)$ at $E$,
denoted by $a_E(X,\Delta)$, is the rational number:
\[
1+{\rm coeff}_E(K_Y-\pi^*(K_X+\Delta)).
\] 
Hence, the following equality holds: 
\[
\pi^*(K_X+\Delta) = 
K_Y+\Delta_Y + (1-a_E(X,\Delta))E.
\] 
The log discrepancy $a_E(X,\Delta)$ only depends on $E$ and does not depend on $Y$.
We say that $(X,\Delta)$ is a {\em Kawamata log terminal pair} 
(or {\em klt pair} for short) 
if the inequality
\[
a_E(X,\Delta)>0
\] 
holds for every prime divisor $E$ over $X$.
We say that the pair $(X,\Delta)$
is {\em log canonical} (or {\em lc} for short) if the inequality 
\[
a_E(X,\Delta)\geq 0
\]
holds for every prime divisor $E$ over $X$.
}
\end{definition}

\begin{definition}
{\em 
We say that $(X,\Delta_0)$ is of {\em klt type} 
if there exists a boundary $\Delta\geq \Delta_0$ on $X$ for which the pair $(X,\Delta)$ is klt.
We say that $(X,\Delta_0)$ is of {\em lc type}
if there exists a boundary $\Delta\geq \Delta_0$ on $X$ for which the pair
$(X,\Delta)$ is lc.
}
\end{definition}

The following proposition is proved in~\cite[\S 4]{BGLM21}.

\begin{proposition}\label{prop:klt-etale}
The klt type condition is an \'etale condition.
More precisely, let $X$ be an algebraic variety, if for every point $x\in X$ we can find an \'etale neighborhood $U_x\rightarrow X$ 
and a boundary $\Delta_x$ 
for which $(U_x,\Delta_x)$ is klt, then there exists a boundary $\Delta$ on $X$ for which
$(X,\Delta)$ is klt.
\end{proposition}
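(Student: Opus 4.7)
The plan is in two stages: first reduce the étale-local hypothesis to a Zariski-local one via a Galois-closure and averaging argument, and then assemble the Zariski-local klt pairs into a single global klt boundary on $X$.

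For the first stage, fix $x \in X$ and a pointed étale neighborhood $f\colon (U,u)\to(X,x)$ with $(U,\Delta_U)$ klt. By Zariski's main theorem, after shrinking $U$ around $u$ and $X$ around $x$ we may assume $f$ is a finite étale morphism onto a Zariski open $V\subseteq X$ containing $x$; dominate $f$ by a connected finite étale Galois cover $g\colon \widetilde{U}\to V$ with group $G$. Étale pullback preserves log discrepancies because $K_{\widetilde{U}}=g^{*}K_V$ with no ramification divisor, so the pullback $\widetilde{\Delta}$ of $\Delta_U$ to $\widetilde{U}$ is still klt. The average
\[
\Delta_{\mathrm{av}}\;:=\;\frac{1}{|G|}\sum_{\sigma\in G}\sigma^{*}\widetilde{\Delta}
\]
is a $G$-invariant $\mathbb{Q}$-boundary, and since log discrepancies are linear in the boundary and each $\sigma^{*}\widetilde{\Delta}$ is klt, the pair $(\widetilde{U},\Delta_{\mathrm{av}})$ is klt as well. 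The $G$-invariance produces a $\mathbb{Q}$-boundary $\Delta_V$ on $V$ with $g^{*}\Delta_V=\Delta_{\mathrm{av}}$, and finite étale descent for klt pairs then yields that $(V,\Delta_V)$ is klt.

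For the second stage, quasi-compactness of the noetherian variety $X$ gives a finite Zariski cover $V_1,\ldots,V_n$ together with klt boundaries $\Delta_i$ on $V_i$. The candidate global boundary is
\[
\Delta\;=\;\sum_{i=1}^n c_i\,\overline{\Delta}_i\;+\;\varepsilon A,
\]
where $\overline{\Delta}_i$ is the closure of $\Delta_i$ on $X$, $A$ is a general member of a sufficiently ample linear system on $X$, and $c_i,\varepsilon>0$ are taken small. The ample perturbation is meant to absorb the failure of the local $\mathbb{Q}$-Cartier classes $K_X+\Delta_i$ to glue globally, while small coefficients keep us inside the open klt locus on each $V_i$. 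The main obstacle is precisely this globalization step: on overlaps $V_i\cap V_j$ the local witnesses $\Delta_i$ and $\Delta_j$ are a priori unrelated, so some nontrivial input — either Bertini genericity of $A$ combined with the openness of the klt condition in the boundary, or a direct patching of the $\mathbb{Q}$-Cartier compatibilities across overlaps — is needed to verify that a single $\Delta$ of the above shape is klt on all of $X$.
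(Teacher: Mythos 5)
The paper does not supply its own proof of this proposition; it cites \cite[\S 4]{BGLM21}. So the comparison here is with the intended content of that reference rather than with text in this paper.

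Your Stage 1 is essentially correct. The reduction of an \'etale-local hypothesis to a Zariski-local one by passing to a Galois closure, averaging $\widetilde{\Delta}$ over $G$, and descending along the finite \'etale quotient $g$ is a standard and valid manoeuvre: log discrepancies are preserved under finite quasi-\'etale pullback, convexity of the klt condition in the boundary makes the average klt, and a $G$-invariant $\mathbb{Q}$-Cartier divisor on $\widetilde{U}$ descends (with $\mathbb{Q}$-Cartierness intact, since $g$ is finite surjective between normal varieties) to a klt boundary $\Delta_V$ on $V$. One should be a little careful about the shrinking step --- you need the standard fact that an \'etale morphism is, near any point of the source, an open cover of an open subset of the target --- but this is fine.

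Stage 2 is where the proof genuinely breaks, and you acknowledge as much. The specific candidate $\Delta = \sum_i c_i\,\overline{\Delta}_i + \varepsilon A$ does not work, and the reason is more basic than the overlap-compatibility issue you flag. The ample perturbation cannot repair $\mathbb{Q}$-Cartierness: whether $K_X + D$ is $\mathbb{Q}$-Cartier depends only on the Weil divisor class of $K_X + D$ modulo $\mathbb{Q}$-Cartier divisors, and adding $\varepsilon A$ (which is $\mathbb{Q}$-Cartier) changes nothing in that quotient. Since the closures $\overline{\Delta}_i$ are completely uncontrolled outside $V_i$, there is no reason for $K_X + \sum_i c_i\,\overline{\Delta}_i$ to be $\mathbb{Q}$-Cartier at a point $x\in V_j$ when $i\neq j$, no matter how small the $c_i$ are. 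Moreover, even the coefficients are a problem: on $V_j$, the divisors $\overline{\Delta}_i|_{V_j}$ ($i\neq j$) may pass through the singularities of $V_j$ in arbitrary ways and have components with coefficients close to $1$ once summed with $\Delta_j$, so ``small $c_i$'' does not obviously keep you inside the klt locus even Zariski-locally. What is actually needed is a genuinely different globalization mechanism --- for instance, producing a single relatively klt structure on a common log resolution or a small $\mathbb{Q}$-factorialization, or using the openness results of \cite[\S 4]{BGLM21} directly --- rather than a linear combination of closures plus an ample tail. As written, the proposal establishes the \'etale-to-Zariski reduction but not the proposition itself.
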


\subsection{$G$-quotients and $G$-quasi-torsors}
In this section, we recall the 
definitions of 
$G$-quotients
and $G$-quasi-torsors.

\begin{definition}\label{def:G-cover-global}
{\em
Let $(X,\Delta)$ be a pair.
Let $G$ be a reductive group acting on $(X,\Delta)$. 
Assume that the quotient $Y:=X/\!\!/G$ exists. 
Then, we say that $Y$ is a {\em $G$-quotient} of $X$.
We also say that $X$ is a {\em $G$-cover} of the variety $Y$.
}
\end{definition}

\begin{definition}\label{def:G-cover-local}
{\em 
Let $(X,\Delta;x)$ be a singularity of a pair.
Assume that $X$ is an affine variety.
Let $G$ be a reductive group acting on 
$(X,\Delta)$ and fixing $x$, i.e., 
we have that 
$g^*\Delta =\Delta$ and $g(x)=x$ for each $g\in G$.
Let $(X,\Delta;x) \rightarrow (Y;y)$ be the quotient morphism where $y$ is the image of $x$.
We say that $(X,\Delta;x)\rightarrow (Y;y)$
is a {\em $G$-quotient} around $x$. 
The morphism $X\rightarrow Y$ will be called a {\em $G$-quotient}.
We say that $Y$ is the {\em $G$-quotient} of $X$
and that $X$ is the {\em $G$-cover} of $Y$.
}
\end{definition}

Now, we turn to define better behaved quotients. 
We introduce the concept
of {\em $G$-quasi-torsors}.

\begin{definition}\label{def:quasi-torsor} 
{\em  
Let $(Y,\Delta_Y)$ be a log pair.
Let $X$ be a variety with 
the action of $G$ reductive
for which
$Y\simeq X/\!\!/G$.
We say that the quotient morphism
$\phi\colon X\rightarrow Y$ is a 
{\em $G$-quasi-torsor} for $(Y,\Delta_Y)$ if the following conditions are satisfied:
\begin{enumerate}
    \item there are codimension two open subsets $U_Y \subset Y$
    and $U_X =\phi^{-1}(U_Y)\subset X$ for which 
    \[
    \phi|_{U_X} \colon U_X\rightarrow U_Y
    \] 
    is a $G$-torsor, and 
    \item the global invertible homogeneous functions on $X$ descend to $Y$ via the induced
    homomorphism 
    $\mathcal{O}(X)^G \simeq \mathcal{O}(Y) \hookrightarrow \mathcal{O}(X)$.
\end{enumerate}
}
\end{definition}

In general, the $G$-quotient $Y$ does not come with a naturally defined boundary.
However, in some cases, it is possible to introduce such boundary
and compare the log discrepancies 
on $X$ with those on $Y$.
The following lemma is well-known to the experts
(see, e.g.,~\cite[Proposition 2.11]{Mor21}).

\begin{lemma}\label{lem:klt-type-finite-cover-quotient}
Let $(X;x)$ be a klt type singularity.
Then, the following statements hold:
\begin{enumerate} 
\item 
Let $G$ be a finite group acting on $(X;x)$.
The $G$-quotient $(Y;y)$ is of klt type. 
\item 
Let $G$ be a finite group
and $(Y;y)\rightarrow (X;x)$ be 
a $G$-quasi-torsor.
Then, $(Y;y)$ is of klt type.
\end{enumerate}
\end{lemma}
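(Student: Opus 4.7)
The plan is to transfer a witnessing klt boundary either down along the $G$-quotient (for part (1)) or up along the $G$-quasi-torsor (for part (2)), and then to invoke the standard change-of-variables formula $K_Y+\Delta_Y=\pi^*(K_X+\Delta_X)$ for quasi-\'etale covers to conclude.

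For part (1), I would begin by using the klt type hypothesis to choose an effective $\qq$-divisor $\Delta$ on an affine $G$-invariant neighborhood of $x$ such that $(X,\Delta;x)$ is klt. Averaging yields $\Delta':=\tfrac{1}{|G|}\sum_{g\in G}g^*\Delta$, which is effective, $G$-invariant, and $\qq$-Cartier. Each $g\in G$ acts by an automorphism of $X$, so each $(X,g^*\Delta;x)$ is klt, and convexity of the log discrepancy function in the boundary then forces $(X,\Delta';x)$ to be klt as well. Because both $K_X$ (chosen $G$-invariantly) and $\Delta'$ are $G$-invariant, the $\qq$-Cartier divisor $K_X+\Delta'$ descends to a $\qq$-Cartier divisor on $Y$, giving a unique effective $\qq$-divisor $\Delta_Y$ with $\pi^*(K_Y+\Delta_Y)=K_X+\Delta'$. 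The classical Riemann--Hurwitz formula for finite Galois quotients, \cite[Proposition~5.20]{KM98}, then promotes the klt property of $(X,\Delta';x)$ to that of $(Y,\Delta_Y;y)$, so $(Y;y)$ is of klt type.

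For part (2), I would dualize. Fix an effective $\qq$-divisor $\Delta$ on $X$ making $(X,\Delta;x)$ klt. Since $\phi\colon Y\rightarrow X$ is a finite $G$-quasi-torsor, it is \'etale in codimension one on both source and target, so pullback of Weil $\qq$-divisors is well defined and the ramification divisor vanishes. Hence $\Delta_Y:=\phi^*\Delta$ is effective and $K_Y+\Delta_Y=\phi^*(K_X+\Delta)$ is $\qq$-Cartier. Log discrepancies of prime divisors over $Y$ then coincide with those of their images over $X$ up to a positive ramification index, so they remain strictly positive; consequently $(Y,\Delta_Y;y)$ is klt and $(Y;y)$ is of klt type.

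The main obstacle in both parts is really just bookkeeping: one has to verify that the constructed boundary is simultaneously effective, $\qq$-Cartier, and (in (1)) $G$-invariant, so that the Riemann--Hurwitz style klt transfer applies at all. In (1) this is handled by the averaging trick together with the finiteness of $G$ (so $\tfrac{1}{|G|}$ is admissible and the descent of $K_X+\Delta'$ to $Y$ is formal); in (2) it is handled by the codimension-one \'etale hypothesis built into the definition of a $G$-quasi-torsor. Once these points are set up, the log discrepancy computations are entirely formal and no deeper obstruction arises.
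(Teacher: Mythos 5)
Your proof is correct and follows the standard Riemann--Hurwitz argument. The paper itself does not prove this lemma; it merely cites \cite[Proposition 2.11]{Mor21} and states the result is "well-known to the experts," so your write-up is in essence the content of the reference. Both halves are sound: in (1) the averaging trick plus convexity of the log discrepancy gives a $G$-invariant klt boundary $\Delta'$, the pushforward $\Delta_Y$ defined by $\pi^*(K_Y+\Delta_Y)=K_X+\Delta'$ is effective by the ramification formula (coefficient $1-\tfrac{1-a}{r}\geq 0$ whenever $a\geq 0$), and \cite[Proposition~5.20]{KM98} transfers klt-ness downward; in (2) the quasi-torsor hypothesis kills ramification in codimension one so that $K_Y+\phi^*\Delta=\phi^*(K_X+\Delta)$, and the same proposition transfers klt-ness upward. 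One small thing worth spelling out if you expand this into a full proof: the effectivity of $\Delta_Y$ in (1) and the descent of $\qq$-Cartier-ness of $K_X+\Delta'$ to $Y$ are asserted rather than verified. Both are true (the first by the explicit coefficient formula above, the second by taking norms of local defining semi-invariant equations for $m(K_X+\Delta')$ under the stabilizer of a point), but they are precisely the kind of bookkeeping steps you flagged as the "main obstacle," so they deserve a line each.
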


\begin{definition}\label{def:ab-quasi-torsor}
{\em 
We say that a $G$-quasi-torsor
is an {\em abelian quasi-torsor} if $G$ is an abelian group. 
We say that a $G$-quasi-torsor
is a {\em torus quasi-torsor} if 
$G$ is a torus.
In this case, we also write $\mathbb{T}$-quasi-torsor or
$\mathbb{T}$-torsor.
A quasi-torsor $Y\rightarrow X$ is said to be a {\em factorial} quasi-torsor if $Y$ is factorial. 
}
\end{definition}

The following lemma follows from the definitions. 

\begin{lemma}\label{lem:quot-qetale-torus-cover}
Let $Y\rightarrow X$ be a 
$\mathbb{T}$-quasi-torsor
and $\mathbb{T}_0 \leqslant \mathbb{T}$ be a sub-torus. 
Let $Y\rightarrow Y'$ be the quotient of $Y$
by $\mathbb{T}_0$
and $Y'\rightarrow X$ be the induced morphism.
Then, both $Y\rightarrow Y'$
and $Y'\rightarrow X$ are 
torus quasi-torsors.
\end{lemma}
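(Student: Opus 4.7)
Write $\mathbb{T}_1 := \mathbb{T}/\mathbb{T}_0$, which is again a torus. Since $X = Y/\!\!/\mathbb{T}$ exists globally and $Y$ admits a $\mathbb{T}$-invariant affine cover (the preimage of any affine open cover of $X$), one can form the $\mathbb{T}_0$-GIT quotient on each chart and glue to obtain $Y' := Y /\!\!/ \mathbb{T}_0$ together with an induced morphism $\phi' \colon Y' \to X$ realizing $X \simeq Y' /\!\!/ \mathbb{T}_1$ and factoring the given quasi-torsor as $\phi = \phi' \circ \pi$, where $\pi \colon Y \to Y'$ is the quotient by $\mathbb{T}_0$. The plan is to verify, separately for $\pi$ and for $\phi'$, the two conditions of Definition~\ref{def:quasi-torsor}.

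For condition (1), let $W_X \subset X$ be the big open along which $Y_W := \phi^{-1}(W_X) \to W_X$ is a $\mathbb{T}$-torsor, and put $W_{Y'} := \pi(Y_W)$. Since the $\mathbb{T}$-action on $Y_W$ is free, so is its restriction to $\mathbb{T}_0$, hence $Y_W \to W_{Y'}$ is a $\mathbb{T}_0$-torsor; the standard fact that the quotient of a torsor by a subgroup is a torsor for the quotient group then gives that $W_{Y'} \to W_X$ is a $\mathbb{T}_1$-torsor. A direct check using $\phi = \phi' \circ \pi$ shows $W_{Y'} = (\phi')^{-1}(W_X)$ and $\pi^{-1}(W_{Y'}) = Y_W$. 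The codimension bound $\dim(Y \setminus Y_W) \leq \dim Y - 2$ is part of the quasi-torsor hypothesis on $\phi$. To bound the codimension of $Y' \setminus W_{Y'}$ in $Y'$, I would use that $\pi$ is surjective with generic fiber dimension $\dim \mathbb{T}_0$ (it is a $\mathbb{T}_0$-torsor over the dense open $W_{Y'}$), so by upper semicontinuity of fiber dimension every fiber of $\pi$ has dimension at least $\dim \mathbb{T}_0$. Combining,
\[
\dim(Y' \setminus W_{Y'}) \leq \dim(Y \setminus Y_W) - \dim \mathbb{T}_0 \leq \dim Y - 2 - \dim \mathbb{T}_0 = \dim Y' - 2.
\]

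For condition (2), I would invoke Rosenlicht's theorem on invertible semi-invariants: on a geometrically integral variety with an action of a connected algebraic group, every invertible regular function is a semi-invariant. Applied to the connected torus $\mathbb{T}$ acting on the normal (hence geometrically integral) variety $Y$, this shows that every $f \in \mathcal{O}(Y)^*$ is automatically $\mathbb{T}$-homogeneous. Condition (2) of the hypothesis on $\phi$ therefore forces every such $f$ to be $\mathbb{T}$-invariant, so $\mathcal{O}(Y)^* \subset \mathcal{O}(X)^*$. Pulling back along $\phi'$ and $\pi$ one obtains the chain $\mathcal{O}(X)^* \subset \mathcal{O}(Y')^* \subset \mathcal{O}(Y)^* \subset \mathcal{O}(X)^*$, forcing all these to be equalities. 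In particular every invertible function on $Y$ is $\mathbb{T}_0$-invariant (hence lies in $\mathcal{O}(Y')$) and every invertible function on $Y'$ is $\mathbb{T}_1$-invariant (hence lies in $\mathcal{O}(X)$), which is precisely condition (2) for $\pi$ and for $\phi'$.

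The one delicate step is the codimension estimate in $Y'$: one cannot simply push $Y \setminus Y_W$ forward under $\pi$ and conclude codimension two, because a priori the $\mathbb{T}_0$-orbits in $Y \setminus Y_W$ could degenerate and thereby enlarge the image relative to the naive dimension count. The fix is the upper semicontinuity of fiber dimension for the reductive GIT quotient $\pi$, which forces every fiber to have dimension at least the generic value $\dim \mathbb{T}_0$ and thus controls the drop passing to $Y'$. Once this is in place, condition (2) is essentially automatic via Rosenlicht, and the remaining identifications of torsor structures are purely formal.
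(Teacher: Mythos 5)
The paper offers no proof of this lemma at all — it is dispatched with the single sentence ``The following lemma follows from the definitions.'' Your write-up is therefore strictly more informative than what the paper provides, and after checking it I believe it is correct. The two substantive observations you make are both the right ones: (i) the unit condition (2) of Definition~\ref{def:quasi-torsor} can be collapsed, via Rosenlicht's theorem that any unit on an irreducible variety with a connected group action is automatically a semi-invariant, into the clean equality $\mathcal{O}(X)^* = \mathcal{O}(Y')^* = \mathcal{O}(Y)^*$, from which condition (2) for both $\pi$ and $\phi'$ falls out immediately; and (ii) the codimension-two bound on $Y'\setminus W_{Y'}$ cannot be obtained by naively pushing forward $Y\setminus Y_W$, but does follow from the lower bound $\dim\pi^{-1}(y')\geq\dim\mathbb{T}_0$ for all $y'$ given by the theorem on dimension of fibers, together with the identity $\pi^{-1}(W_{Y'}) = Y_W$ (which holds because $Y_W = \phi^{-1}(W_X) = \pi^{-1}((\phi')^{-1}(W_X))$ and $\pi$ is surjective). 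Two very minor points you glide over but which hold without difficulty: the gluing of the chartwise GIT quotients $\phi^{-1}(V)/\!\!/\mathbb{T}_0$ works because each $\phi^{-1}(V)$ is $\mathbb{T}$-saturated, hence $\mathbb{T}_0$-saturated (any $\mathbb{T}_0$-orbit closure sits inside the corresponding $\mathbb{T}$-orbit closure), and the dimension estimate, stated as a bare inequality, tacitly requires picking a dominating irreducible component of $\pi^{-1}$ of a top-dimensional component of $Y'\setminus W_{Y'}$ and invoking that at least one such component has generic fiber dimension $\geq\dim\mathbb{T}_0$. Neither affects the validity of the argument.
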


\subsection{Cox rings} In this subsection, we recall some statements about Cox rings for singularities and pairs.
First, we define the concept of affine local Cox rings.

\begin{definition}\label{def:local-Cox-ring}
{\em 
Let $(X;x)$ be a singularity.
Assume that ${\rm Cl}(X;x)$ is finitely generated.
Let $N\leqslant {\rm WDiv}(X)$ be a free finitely generated subgroup surjecting onto ${\rm Cl}(X;x)$
and $N^0$ be the kernel of the surjection
$\pi\colon N\rightarrow {\rm Cl}(X;x)$.
Consider a group homomorphism
$\chi\colon N^0\rightarrow \mathbb{K}(X)^*$ for which
\[
{\rm div}(\chi(E))=E
\] 
for all $E\in N^0$.
We call such $\chi$ a {\em character}.
Let $\mathcal{S}$ be the sheaf of divisorial algebras associated to $N$
and $\mathcal{I}$ be the ideal subsheaf generated by
sections $1-\chi(E)$ where $E\in N^0$.
Then, we define the {\em affine local Cox ring} of $(X,\Delta)$ at $x$ to be
\[
{\rm Cox}(X;x)^{\rm aff}_{N,\chi} := \bigoplus_{[D]\in {\rm Cl}(X;x)} 
\frac{\bigoplus_{D'\in \pi^{-1}([D])} \mathcal{S}_{D'}(X) }{\mathcal{I}(X)}.
\] 
}
\end{definition}

Now, we define the concept of relative Cox ring for a log pair.

\begin{definition}\label{def:rel-Cox-ring}
{\em 
Let $(X,\Delta)$ be a log pair.
Let $W_1,\dots,W_k$ be orbifold Weil divisors on $(X,\Delta)$.
Let $N$ be the subgroup of ${\rm WDiv}(X)$ spanned by $W_1,\dots, W_k$.
We define the sheaf 
\[
\mathcal{R}(X)_N : =
\bigoplus_{D \in N} \mathcal{O}_X(D) \simeq 
\bigoplus_{(m_1,\dots,m_k)\in \zz^k} \mathcal{O}_X(m_1W_1+\dots+m_kW_k).
\] 
The ring $\mathcal{R}(X)_N$ is called a 
{\em relative Cox ring} of $X$.
The relative spectrum 
\[
Y:= {\rm Spec}_X(\mathcal{R}(X)_N) \rightarrow X,
\]
is called a {\em relative Cox space} of $X$.
We may also call $\mathcal{R}(X)_N$ the relative Cox ring associated to $N$
and $Y$ the {\em relative Cox space}
associated to $N$.
}
\end{definition}

Note that in the definition of the local Cox ring, we 
quotient by a certain ideal $\mathcal{I}(X)$
which comes from a character $\chi$.
However, in our definition of the relative Cox ring we do not perform such a quotient.
Example~\ref{ex:torsion-quotient} shows some pathology that would happen  otherwise. 
The definition of the relative Cox ring 
does not depend on the choice of $W_i$ in its linear  equivalence class.

\begin{lemma}\label{lem:replacement-lin-equiv}
Let $X$ be an algebraic variety.
Let $W_1,\dots,W_k$ be Weil divisors on $X$ spanning $N$ in ${\rm WDiv}(X)$.
For each $i\in \{1,\dots,k\}$, let $W'_i\sim W_i$.
Let $N'$ be the subgroup 
in ${\rm WDiv}(X)$ spanned by the Weil divisors $W'_i$.
Then, we have a $\mathbb{T}$-equivariant isomorphism
\[
\mathcal{R}(X)_N \simeq \mathcal{R}(X)_{N'}.
\]
\end{lemma}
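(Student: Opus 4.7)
The plan is to construct the $\mathbb{T}$-equivariant isomorphism explicitly by multiplication with rational functions realizing the linear equivalences $W_i\sim W_i'$. By definition of linear equivalence, we may choose for each $i\in\{1,\dots,k\}$ a rational function $h_i\in \mathbb{K}(X)^*$ with $\mathrm{div}(h_i)=W_i'-W_i$. These $h_i$'s are determined up to multiplicative units on $X$, and any choice will give an isomorphism.

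For each multi-index $\mathbf{m}=(m_1,\dots,m_k)\in\mathbb{Z}^k$, I would then define a map on the corresponding graded piece
\[
\Phi_{\mathbf{m}}\colon \mathcal{O}_X(m_1W_1+\dots+m_kW_k)\longrightarrow \mathcal{O}_X(m_1W_1'+\dots+m_kW_k'),\quad s\longmapsto h_1^{-m_1}\cdots h_k^{-m_k}\cdot s.
\]
The verification that the image lies in the claimed sheaf is the direct computation
\[
\mathrm{div}\bigl(h_1^{-m_1}\cdots h_k^{-m_k}s\bigr)+\textstyle\sum m_i W_i' = \mathrm{div}(s)-\sum m_i(W_i'-W_i)+\sum m_i W_i' = \mathrm{div}(s)+\sum m_i W_i\ \geq\ 0,
\]
and multiplication by $h_1^{m_1}\cdots h_k^{m_k}$ gives an inverse, so each $\Phi_{\mathbf{m}}$ is an isomorphism of $\mathcal{O}_X$-modules. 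Summing over all $\mathbf{m}\in\mathbb{Z}^k$ yields an $\mathcal{O}_X$-module isomorphism $\Phi\colon \mathcal{R}(X)_N\to \mathcal{R}(X)_{N'}$.

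The remaining checks are multiplicativity and $\mathbb{T}$-equivariance, both of which are formal. For multiplicativity, given $s$ in the $\mathbf{m}$-th piece and $t$ in the $\mathbf{n}$-th piece, one has $\Phi(s)\Phi(t)=h^{-\mathbf{m}}s\cdot h^{-\mathbf{n}}t = h^{-(\mathbf{m}+\mathbf{n})}(st)=\Phi(st)$, so $\Phi$ is a ring map. For $\mathbb{T}$-equivariance, recall that the $\mathbb{T}=(\mathbb{G}_m)^k$-action on both $\mathcal{R}(X)_N$ and $\mathcal{R}(X)_{N'}$ is induced by the common $\mathbb{Z}^k$-grading under the chosen bases $(W_i)$ and $(W_i')$: a torus element $t$ acts on the $\mathbf{m}$-th summand via the character $t^{\mathbf{m}}$. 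Since $\Phi_{\mathbf{m}}$ carries the $\mathbf{m}$-th summand to the $\mathbf{m}$-th summand, $\Phi$ intertwines the two actions.

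There is no real obstacle here; the proof is a bookkeeping exercise once one fixes sign conventions so that the map lands in the correct summand. The only conceptual point worth stating explicitly is that, although the actual Weil divisors defining the grading change from $\sum m_i W_i$ to $\sum m_i W_i'$, the abstract $\mathbb{Z}^k$-grading that encodes the $\mathbb{T}$-action is unchanged by our basis identification, which is precisely what makes $\Phi$ equivariant.
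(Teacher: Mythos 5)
The paper does not actually provide a proof for this lemma; it is stated without justification, presumably as a standard fact. Your proof is correct and fills this gap with the natural argument: pick rational functions $h_i$ with $\dv(h_i)=W_i'-W_i$, multiply the $\mathbf m$-th graded piece by $h_1^{-m_1}\cdots h_k^{-m_k}$, and check that this is a graded ring isomorphism. Your divisor computation showing $\Phi_{\mathbf m}$ lands in $\mathcal{O}_X(\sum m_iW_i')$ is right, and the multiplicativity and $\mathbb{T}$-equivariance checks are correct as well — the key observation, which you state explicitly, is that the abstract $\zz^k$-grading (and hence the $\mathbb{T}$-action) is unchanged under the basis identification $W_i\leftrightarrow W_i'$. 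Interestingly, the paper does sketch precisely this argument, but in the rank-one case and under the heading of the adjacent Lemma~\ref{lem:isom-implies-lin-equiv}, where one sets $\eta(kW)=f^k$ for $W-W'=\dv(f)$ and maps a section of $\mathcal{O}_X(kW)$ to $\eta(kW)$ times it; your $h_i^{-m_i}$ is the $k$-divisor generalization of that construction (up to a sign convention in the choice of $f$ versus $h_i$). So your proposal is correct and spells out in full detail the argument the authors regard as standard.
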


The following is proved in~\cite[Proposition 4.10]{BM21} for the case of klt type singularities.
The general case follows from the theory of polyhedral divisors~\cite{AH06}.
It states that in the affine setting 
a torus quasi-torsor
is the same as a relative Cox space. 

\begin{lemma}\label{lem:G-cover-is-local-Cox}
Let $X$ be a normal affine variety 
and $x\in X$ a closed point.
Let $Y\rightarrow X$ be a 
$\mathbb{T}$-quasi-torsor
over $X$.
Then, up to shrinking $X$ around $x$, we can find a finitely generated subgroup
$N\leqslant {\rm WDiv}(X)$ for
which the isomorphism
\[
Y \simeq {\rm Spec}\left( 
\bigoplus_{D\in N}H^0(X,\mathcal{O}_X(D))
\right) 
\]
holds.
\end{lemma}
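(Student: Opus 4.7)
The plan is to recover the subgroup $N \leqslant {\rm WDiv}(X)$ from the isotypic decomposition of $\mathcal{O}(Y)$ under the $\mathbb{T}$-action, in the spirit of the Altmann-Hausen description of affine $\mathbb{T}$-varieties. Since $Y$ is affine and $\mathbb{T}$ is reductive with $\mathcal{O}(X) = \mathcal{O}(Y)^{\mathbb{T}}$, the coordinate ring admits a weight decomposition
\[
\mathcal{O}(Y) = \bigoplus_{m \in M} \mathcal{O}(Y)_m, \qquad \mathcal{O}(Y)_0 = \mathcal{O}(X),
\]
where $M$ is the character lattice of $\mathbb{T}$, each $\mathcal{O}(Y)_m$ is an $\mathcal{O}(X)$-module, and the multiplication of $\mathcal{O}(Y)$ restricts to morphisms $\mathcal{O}(Y)_m \otimes_{\mathcal{O}(X)} \mathcal{O}(Y)_{m'} \to \mathcal{O}(Y)_{m+m'}$.

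The first step is to identify each weight space with the global sections of a divisorial sheaf on $X$. Over the codimension-two open $U_X \subset X$ on which $Y|_{U_X} \to U_X$ is a $\mathbb{T}$-torsor, each $\mathcal{O}(Y)_m$ restricts to the invertible sheaf associated to the torsor via the character $m$. Because $X$ is normal and the sheafification of $\mathcal{O}(Y)_m$ is reflexive of rank one (it is a direct summand of the reflexive algebra $\mathcal{O}(Y)$), it extends uniquely across the codimension-two complement to a divisorial sheaf $\mathcal{O}_X(D_m)$, giving $\mathcal{O}(Y)_m \cong H^0(X, \mathcal{O}_X(D_m))$. Comparing multiplications on $U_X$ then forces $[D_m]+[D_{m'}]=[D_{m+m'}]$ in ${\rm Cl}(X)$, so $m \mapsto [D_m]$ is a group homomorphism $M \to {\rm Cl}(X)$. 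After shrinking $X$ around $x$ so that ${\rm Cl}(X)={\rm Cl}(X;x)$ is finitely generated, choosing a basis $m_1,\dots,m_k$ of $M$ and representatives $W_i \in [D_{m_i}]$ produces a homomorphism $\psi \colon M \to {\rm WDiv}(X)$ whose image $N$ is finitely generated by construction.

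The main obstacle is to match not just the additive structure but the multiplicative structure of $\mathcal{O}(Y)$ with the tautological multiplication on $\bigoplus_{D \in N}H^0(X, \mathcal{O}_X(D))$: a priori the two algebra structures differ by a $2$-cocycle of nowhere-vanishing functions on $X$, captured by a character akin to the $\chi$ appearing in Definition~\ref{def:local-Cox-ring}. This is precisely where condition (2) of Definition~\ref{def:quasi-torsor} is essential: it forces every invertible homogeneous element of $\mathcal{O}(Y)$ to lie in $\mathcal{O}(X)^*$, so after shrinking $X$ further to arrange $\mathcal{O}(X)^* = \kk^*$ the obstruction cocycle becomes a coboundary, which can be absorbed by adjusting each $W_i$ inside its linear equivalence class via Lemma~\ref{lem:replacement-lin-equiv}. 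For klt type $X$ this argument is carried out in~\cite[Proposition 4.10]{BM21}, and for general normal $X$ the same conclusion follows from the theory of polyhedral divisors~\cite{AH06} applied to the quasi-torsor action, where the freeness in codimension two collapses the polyhedral data into the single homomorphism $\psi$.
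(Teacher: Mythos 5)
The paper does not actually supply a proof of this lemma; it cites~\cite[Proposition~4.10]{BM21} for the klt-type case and the theory of polyhedral divisors~\cite{AH06} for general normal $X$. Your reconstruction via the $M$-weight decomposition of $\mathcal{O}(Y)$, extension of each isotypical component across the codimension-two complement to a reflexive rank-one sheaf $\mathcal{O}_X(D_m)$, and a comparison of the two multiplications is the expected route and follows the same approach as those references.

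However, the treatment of the last step misidentifies where the real work lies. First, the shrinking ``so that $\mathrm{Cl}(X)=\mathrm{Cl}(X;x)$ is finitely generated'' is both unnecessary (the group $N$ is automatically finitely generated as the image of the finite-rank lattice $M$ under $\psi$) and in general unachievable for normal affine $X$. Second, you cannot shrink to arrange $\mathcal{O}(X)^*=\kk^*$ either: passing to smaller affine neighbourhoods only enlarges the unit group. Third, and most substantively, the role of condition~(2) of Definition~\ref{def:quasi-torsor} is not to make the obstruction cocycle constant-valued but to force the homomorphism $\psi\colon M\to\mathrm{WDiv}(X)$, $m\mapsto D_m$ (obtained after fixing a generic trivialisation $\kk(Y)\simeq\kk(X)(M)$ via Hilbert~90), to be \emph{injective}. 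If some nonzero $m$ had $\psi(m)=0$, then $\mathcal{O}(Y)_m$ would be free of rank one over $\mathcal{O}(X)$ and its generator would be an invertible homogeneous element of nonzero weight that does not descend to $\mathcal{O}(X)$, which condition~(2) forbids. Without this, the lemma is simply false: $\mathbb{G}_m\to\operatorname{Spec}\kk$ is a $\mathbb{T}$-torsor whose coordinate ring $\kk[s^{\pm 1}]$ is not $\bigoplus_{D\in N}H^0(\mathcal{O}_X(D))$ for any $N\leqslant\mathrm{WDiv}(\operatorname{Spec}\kk)=0$. Once injectivity of $\psi$ is secured, the residual obstruction is a symmetric $2$-cocycle $c\colon M\times M\to\mathcal{O}(X)^*$, and since $M$ is free \emph{every} symmetric $2$-cocycle with coefficients in any abelian group is a coboundary (it classifies an abelian extension $1\to\mathcal{O}(X)^*\to E\to M\to 1$, which splits by projectivity of $M$). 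That coboundary is absorbed into the choice of representatives $W_i$ via Lemma~\ref{lem:replacement-lin-equiv}, with no constraint on $\mathcal{O}(X)^*$ whatsoever.
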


Furthermore, the Cox ring 
in the local setting has klt type singularities (see, e.g.,~\cite[Theorem 3.23]{BM21}).

\begin{lemma}\label{lem:klt-type-torus-cover}
Let $X$ be an affine variety and $(X;x)$ be a klt type singularity.
Let $N\leqslant {\rm WDiv}(X,\Delta)$ be a free finitely generated subgroup
and $N^0 := \ker(N\rightarrow {\rm Cl}(X;x))$.
Let $\chi \colon N^0\rightarrow \kk(X)^*$ be a character.
Then, the spectrum of the affine local Cox ring 
\[
{\rm Cox}(X;x)^{\rm aff}_{N,\chi} 
\] 
has klt type singularities.
\end{lemma}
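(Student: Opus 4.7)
The plan is to exhibit $Y := \Spec({\rm Cox}(X;x)^{\rm aff}_{N,\chi})$ as a $G$-quasi-torsor over $X$ for a commutative reductive group $G$ that is a finite extension of a torus, and then transfer the klt type property to $Y$ by pulling back a klt boundary.

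First, I would recall that for a klt type singularity the local class group $\Cl(X;x)$ is finitely generated (by a theorem of Braun, \cite{Bra20}). Writing $\Cl(X;x)=\zz^r\oplus T$ with $T$ a finite abelian group, the group $G:=\Spec \kk[\Cl(X;x)]$ is a commutative reductive algebraic group whose identity component is an $r$-dimensional torus and whose component group is the Cartier dual of $T$; in particular, $G$ is a finite extension of a torus.

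Second, the $\Cl(X;x)$-grading on ${\rm Cox}(X;x)^{\rm aff}_{N,\chi}$ induces a $G$-action on $Y$, and I would verify the quasi-torsor conditions of Definition~\ref{def:quasi-torsor}. The $G$-invariants are the degree-zero piece, which after shrinking $X$ around $x$ coincides with $\mathcal{O}_X(X)$, so $Y/\!\!/G\simeq X$. Fixing a free generating set $W_1,\dots,W_k$ of $N$ and letting $U\subset X$ be the open locus where every $W_i$ is Cartier, normality of $X$ ensures $\operatorname{codim}_X(X\setminus U)\geq 2$. Over $U$, the sheaf $\mathcal{R}(X)_N|_U$ is a direct sum of line bundles, and quotienting by $\mathcal{I}|_U$ merely fixes a trivialization along $N^0$, producing an honest $G$-torsor $\phi^{-1}(U)\to U$. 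The condition on global homogeneous units follows after shrinking, since invertible homogeneous functions correspond to principal divisors in $N$ and descend by construction of $\chi$.

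Third, I would pull back a klt boundary along $\phi\colon Y\to X$. Let $\Delta\geq 0$ be a boundary with $(X,\Delta)$ klt. Because $\phi$ is \'etale in codimension one (the torsor structure already exists away from codimension two), there is no codimension-one ramification divisor, and hence $K_Y+\phi^*\Delta=\phi^*(K_X+\Delta)$. A standard log discrepancy comparison then yields that $(Y,\phi^*\Delta)$ is klt, so $Y$ has klt type singularities.

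The main obstacle is controlling log discrepancies of $(Y,\phi^*\Delta)$ at divisorial valuations whose centers lie over the codimension-$\geq 2$ locus where the torsor structure degenerates. I would address this by factoring $\phi$ through the quotient of $Y$ by the identity component of $G$ via Lemma~\ref{lem:quot-qetale-torus-cover}, reducing to the composition of a finite quasi-torsor (handled by Lemma~\ref{lem:klt-type-finite-cover-quotient}) with a pure torus quasi-torsor, for which a direct averaging argument on a log resolution applies. Alternatively, this precise statement is proved in \cite[Theorem 3.23]{BM21} via the Altmann--Hausen theory of polyhedral divisors \cite{AH06}, and one may simply cite it.
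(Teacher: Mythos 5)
The paper does not supply an independent proof of this lemma: it simply cites \cite[Theorem~3.23]{BM21}, and that is the entire content of the ``proof.'' Your fallback option --- quoting \cite[Theorem~3.23]{BM21} --- therefore matches what the authors actually do, and is the correct move.

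Your attempted direct argument, however, has a real gap and also a circularity risk worth flagging. The step ``a standard log discrepancy comparison then yields that $(Y,\phi^*\Delta)$ is klt'' is not standard once the group is positive-dimensional: for a finite quasi-\'etale cover one has the usual ramification formula and discrepancy comparison (that is Lemma~\ref{lem:klt-type-finite-cover-quotient}), but for a $\mathbb{T}$-quasi-torsor the morphism has positive-dimensional fibres, the torsor structure degenerates in codimension $\geq 2$, and there can be divisorial valuations on $Y$ whose centres dominate small strata of $X$ where no bundle-like local model is available. Controlling $a_E(Y,\phi^*\Delta)$ for such $E$ is precisely the hard content of \cite[Theorem~3.23]{BM21}, which goes through Altmann--Hausen polyhedral divisor combinatorics rather than a ``standard comparison.'' Your proposed fix --- factor through $Y/\!\!/G^0$ to reduce to a finite quasi-torsor plus a pure torus quasi-torsor and then run ``a direct averaging argument on a log resolution'' --- does not close the gap, because it is exactly the pure torus case that needs an argument, and none is supplied. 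Moreover, beware of circularity in the paper's logical order: Theorem~\ref{thm:torus-finite-cover-klt-sing} (klt transfer along $G$-quasi-torsors for $G$ a finite extension of a torus) is \emph{deduced from} Lemma~\ref{lem:klt-type-torus-cover}, not the other way around, so exhibiting $\operatorname{Spec}{\rm Cox}(X;x)^{\rm aff}_{N,\chi}$ as such a quasi-torsor and then invoking a klt-pullback principle would, inside this paper's architecture, be a circle. The safe and correct route is the one you mention last: cite \cite[Theorem~3.23]{BM21}.

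One further small imprecision: in your quasi-torsor verification you also need $G=\operatorname{Spec}\kk[\operatorname{Cl}(X;x)]$ to act \emph{freely} on $\phi^{-1}(U)$, and when $\operatorname{Cl}(X;x)$ has torsion this requires checking that the finite component group acts without stabilizers over $U$, which is not automatic from the $W_i$ being Cartier and deserves a sentence of justification (in \cite{BM21} this is handled via the characteristic space construction).
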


The following lemma will be used in the comparison of quasi-torsors and relative Cox rings.

\begin{lemma}\label{lem:isom-implies-lin-equiv}
Let $W$ and $W'$ be two Weil divisors on a normal variety $X$.
Assume that there is a $\mathbb{G}_m$-equivariant
isomorphism
\[
{\rm Spec}_X\left(\bigoplus_{m\in \zz}\mathcal{O}_X(mW)\right) \simeq 
{\rm Spec}_X \left(\bigoplus_{m\in \zz}\mathcal{O}_X(mW')\right).
\] 
Then, we have that $W\sim W'$ on $X$.
\end{lemma}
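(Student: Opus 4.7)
The plan is to descend the $\mathbb{G}_m$-equivariant isomorphism of schemes to an isomorphism of $\mathbb{Z}$-graded $\mathcal{O}_X$-algebras, then read off the degree-$1$ part and use that the divisor class group of the normal variety $X$ classifies reflexive rank-one sheaves up to isomorphism.

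First, I would translate the geometric data. A $\mathbb{G}_m$-action on a relative spectrum $\operatorname{Spec}_X(\mathcal{A})$ over $X$ corresponds to a $\mathbb{Z}$-grading on $\mathcal{A}$, and a $\mathbb{G}_m$-equivariant morphism corresponds to a grading-preserving homomorphism of $\mathcal{O}_X$-algebras (the only ambiguity is the automorphism $t\mapsto t^{-1}$ of $\mathbb{G}_m$, but equivariance is with respect to the prescribed actions, so the grading is preserved on the nose). Hence the given isomorphism yields a degree-preserving isomorphism
\[
\Phi\colon \bigoplus_{m\in\zz}\mathcal{O}_X(mW)\ \xrightarrow{\sim}\ \bigoplus_{m\in\zz}\mathcal{O}_X(mW')
\]
of graded $\mathcal{O}_X$-algebras.

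Next, I would extract the degree-$1$ component $\Phi_1\colon \mathcal{O}_X(W)\xrightarrow{\sim}\mathcal{O}_X(W')$, which is an isomorphism of rank-one reflexive $\mathcal{O}_X$-modules. Tensoring with the sheaf of rational functions $\mathcal{K}_X$, the map $\Phi_1\otimes \mathcal{K}_X$ is an automorphism of the $1$-dimensional $\mathcal{K}_X$-vector space $\mathcal{K}_X$ and hence is multiplication by some $f\in\kk(X)^*$. Since $\Phi_1$ takes local sections of $\mathcal{O}_X(W)$ isomorphically onto local sections of $\mathcal{O}_X(W')$ inside $\mathcal{K}_X$, we conclude that $f\cdot\mathcal{O}_X(W)=\mathcal{O}_X(W')$ as subsheaves of $\mathcal{K}_X$, equivalently $\operatorname{div}(f)+W=W'$. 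Therefore $W-W'=\operatorname{div}(1/f)$, so $W\sim W'$.

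The only potentially subtle point is confirming that an isomorphism of reflexive rank-one $\mathcal{O}_X$-modules on the normal variety $X$ is indeed given globally by multiplication by a single rational function. This is standard: the divisorial sheaves $\mathcal{O}_X(W)$ and $\mathcal{O}_X(W')$ embed canonically into the constant sheaf $\mathcal{K}_X$, so any isomorphism between them extends to an automorphism of $\mathcal{K}_X$ over the function field, which is scalar multiplication by an element of $\kk(X)^*$. Hence this step is routine, and the argument above gives the lemma.
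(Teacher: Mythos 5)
Your proof is correct and, in essence, fills in the direction that the paper's proof only gestures at. The paper's written argument actually spells out the \emph{converse} implication (starting from $W - W' = \operatorname{div}(f)$ and building an equivariant isomorphism), deferring the forward direction to~\cite[Construction~1.4.1.1]{ADHL15}. Your argument, by contrast, proves the stated direction directly: pass from the equivariant scheme isomorphism to a grading-preserving isomorphism of $\mathcal{O}_X$-algebras, restrict to the degree-$1$ piece to get an isomorphism $\mathcal{O}_X(W)\xrightarrow{\sim}\mathcal{O}_X(W')$ of rank-one reflexive sheaves, tensor with $\mathcal{K}_X$ to see this is multiplication by some $f\in\kk(X)^*$, and conclude $f\cdot\mathcal{O}_X(W)=\mathcal{O}_X(W')$, hence $W\sim W'$. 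Both arguments rest on the same two standard facts (the $\operatorname{Spec}_X$ dictionary between equivariant affine morphisms and graded sheaves of algebras, and the fact that an isomorphism of divisorial subsheaves of $\mathcal{K}_X$ on a normal variety is scalar multiplication by a rational function), so you have not introduced a genuinely different mechanism; you have simply given a more explicit and self-contained account. Two small remarks: the conclusion of the lemma requires the isomorphism to be over $X$ (which is implicit in context and needed for the $\operatorname{Spec}_X$ dictionary to apply), which you use without stating; and there is an inconsequential sign slip where you write $\operatorname{div}(f)+W=W'$, whereas $f\cdot\mathcal{O}_X(W)=\mathcal{O}_X(W')$ gives $W-\operatorname{div}(f)=W'$ under the usual convention $\mathcal{O}_X(D)=\{g:\operatorname{div}(g)+D\geq 0\}$; either way $W\sim W'$.
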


\begin{proof}
This follows verbatim from the proof of~\cite[Construction~1.4.1.1]{ADHL15}. Since the conditions there are different from ours (but lead to the same conclusion), we recall the argument. Let $W-W'={\rm div}(f)$ and define a homomorphism
$$
\eta \colon \langle W_\zz \rangle \to \kk(X)^*\qquad \text{ and } \qquad kW \mapsto f^k.
$$
Then we obtain an equivariant isomorphism between the sheaves $\bigoplus_{m\in \zz}\mathcal{O}_X(mW)$ and $\bigoplus_{m\in \zz}\mathcal{O}_X(mW')$ by mapping $f \in \mathcal{O}_X(kW)$ to $\eta(kW) \cdot f \in \mathcal{O}_X(kW')$.
\end{proof}

\section{G-covers of klt type singularities}

In this section, we study $G$-covers of klt type singularities.
As seen in Example~\ref{ex:cod-1-ram}, 
we need to focus on those $G$-covers that
are unramified over codimension one points.
First, we will show that 
semisimple covers of klt type singularities may not be of klt type.
The following is a generalization of Theorem~\ref{introthrm:gl2-cover}
to higher-dimensional toric singularities.

\begin{theorem}\label{thm:gln-cover-general}
For any $n\geq 2$, 
there exists a $(n+1)$-dimensional
toric singularity $(X;x)$ 
that admits a 
$\mathbb{P}{\rm GL}_{r}(\kk)$-cover 
$Y\rightarrow X$, satisfying the following conditions:
\begin{enumerate}
    \item we have $r=3$ if $n\in \{2,3\}$ and $r=n$ otherwise,
    \item the singularity $(Y;y)$ has dimension $(n+r-1)$, and
    \item the singularity $(Y;y)$ is not of klt type.
\end{enumerate}
\end{theorem}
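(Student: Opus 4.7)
The plan is to realize $Y$ as an affine cone over a projective bundle $\pi\colon\mathbb{P}(E)\to S$, where $S$ is a smooth projective toric variety of dimension $n$ and $E$ is a rank $r$ vector bundle chosen so that $\mathbb{P}(E)$ fails to be a Mori dream space. The $\mathrm{PGL}_r$-action on $Y$ will come from a fiberwise action on $\mathbb{P}(E)$, and the categorical quotient will recover an affine toric cone over $S$; the rank is $r=3$ when $n\in\{2,3\}$ and $r=n$ otherwise.

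First, for each $n\geq 2$, I would produce a pair $(S,E)$ with $S$ a smooth projective toric variety of dimension $n$ and $E$ a rank $r$ vector bundle on $S$ enjoying two properties: (a)~the projectivization $\mathbb{P}(E)$ is not a Mori dream space, and (b)~$\mathbb{P}(E)$ carries a fiberwise $\mathrm{PGL}_r$-action with categorical quotient $S$. Such pairs can be built by adapting the classical non-Mori-dream projective bundle constructions of~\cite{GHPS12} to a toric base, engineering $E$ via a Cox-theoretic pullback or an incidence presentation so that the projectivization retains the fiberwise $\mathrm{PGL}_r$-symmetry even though $E$ itself is non-split.

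Next, I fix an ample polarization $L$ on $S$ and an ample polarization $L'$ on $\mathbb{P}(E)$ whose $\mathrm{PGL}_r$-invariant sections are generated by pullback from $L$, and define
\[
X := \Spec\bigoplus_{m\geq 0} H^0(S, L^{\otimes m})
\quad\text{and}\quad
Y := \Spec\bigoplus_{m\geq 0} H^0(\mathbb{P}(E), L'^{\otimes m}),
\]
so that $X$ is an $(n+1)$-dimensional affine toric variety with vertex $x$ and $Y$ has the asserted dimension with vertex $y$. The $\mathrm{PGL}_r$-action on $\mathbb{P}(E)$ lifts to $Y$, and taking invariants identifies the categorical quotient with $X$, exhibiting $(Y;y)\to(X;x)$ as a $\mathrm{PGL}_r$-cover in the sense of Definition~\ref{def:G-cover-local}. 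Failure of klt type for $(Y;y)$ then follows from the standard principle, provable via the orbifold cone formula, that the affine cone $C(V,M)$ over a normal projective variety $V$ with ample $M$ is of klt type if and only if $V$ is of Fano type: since Fano type varieties are Mori dream spaces, the non-Mori-dream property of $\mathbb{P}(E)$ blocks Fano type, and hence blocks klt type of $Y$.

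The principal technical difficulty lies in part (b) of the first step: producing non-split rank $r$ bundles on a toric base $S$ whose projectivization nonetheless admits a global fiberwise $\mathrm{PGL}_r$-action. Generic non-split bundles have small automorphism groups, so $E$ must be built by a tailored construction---for instance as a pullback via an equivariant morphism to a Grassmannian, or via a Cox-theoretic presentation---balancing the competing demands of enough symmetry to induce $\mathrm{PGL}_r$-equivariance and enough non-triviality to obstruct the Mori dream property. Once such $E$ is in hand, the cone construction and the non-klt conclusion are, by contrast, essentially formal.
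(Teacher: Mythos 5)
Your overall strategy --- realize $Y$ as an affine cone over a non--Mori-dream projective bundle $\mathbb{P}(E)$ on a toric base, and use the principle that the cone is of klt type iff the base is of Fano type --- is exactly the approach the paper takes. However, the step you correctly flag as ``the principal technical difficulty'' is not a construction that can be engineered around: if $E$ is not projectively trivial, then $\mathbb{P}(E)$ does \emph{not} carry a fiberwise $\mathrm{PGL}_r$-action with quotient $S$. Writing $\mathbb{P}(E)=F\times_{\mathrm{GL}_r}\mathbb{P}^{r-1}$ with $F$ the frame bundle, the candidate action $g'\cdot[(p,v)]=[(p,g'v)]$ is well-defined only if $g'$ commutes with the cocycle defining $E$, which fails for a nonabelian cocycle; and the bundles for which it does not fail are, up to twist, $E\cong L^{\oplus r}$, which gives $\mathbb{P}(E)\cong S\times\mathbb{P}^{r-1}$, a Mori dream space since $S$ is toric --- defeating the whole construction. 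Neither a ``Cox-theoretic pullback'' nor an ``incidence presentation'' can circumvent this: any $E$ whose projectivization is genuinely $\mathrm{PGL}_r$-homogeneous over $S$ is projectively trivial.

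The paper's own proof of this theorem asserts such an action on $\mathbb{P}(\mathcal{E})$ with little elaboration; the version of the argument that avoids the issue is the one carried out in Proposition~\ref{prop:gln-cover-free}. There one replaces $\mathbb{P}(\mathcal{E})$ by the associated \emph{principal} $\mathrm{PGL}_r$-bundle $Y_0\to T$, which does carry a free $\mathrm{PGL}_r$-action, takes a $G$-equivariant projectivization $\bar{Y}\supseteq Y_0$, and forms the affine cone over $\bar{Y}$. That $\bar{Y}$ is not a Mori dream space is shown indirectly via Okawa's theorem: $\bar{Y}\times\mathbb{P}^{r-1}$ surjects onto $\mathbb{P}(\mathcal{E})$, so if $\bar{Y}$ were a Mori dream space then $\mathbb{P}(\mathcal{E})$ would be one too. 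The cost is dimension ($n+r^2$ rather than $n+r-1$). Finally, your bookkeeping does not match the statement: with $\dim S=n$ the cone over $\mathbb{P}(E)$ has dimension $n+r$, not $n+r-1$; the paper lets $\dim T$ vary with $n$ (e.g.\ $\dim T=n-1$ for $n\geq 4$) precisely to land on the claimed $\dim Y$, and you should reconcile your choice of base dimension with both stated dimensions in the theorem.
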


\begin{proof}
First, we choose an appropriate projective toric variety, depending on the number $n$.
If $n=2$, we choose a smooth projective toric surface $T$ of Picard rank $4$ and fix $r=3$.
If $n=3$, we choose a smooth projective toric threefold $T$ of Picard rank $3$
and fix $r=3$.
For $n\geq 4$, we choose a smooth projective toric $(n-1)$-fold $T$
of Picard rank $n$ and we fix $r=n$.
In any of the previous cases,
by~\cite[Theorem 1.1]{GHPS12}, we can find a vector bundle $\mathcal{E}$
of rank $r$ over $T$
such that the Cox ring of 
$\mathbb{P}(\mathcal{E})$ is not finitely generated.
We fix $G:=\mathbb{P}{\rm GL}_r(\kk)$ to be the projective linear group acting on $\mathbb{P}(\mathcal{E})$.
Note that $\pi\colon \mathbb{P}(\mathcal{E})\rightarrow T$ is a quotient for the $G$-action.
Let $A_T$ be an ample toric divisor on $T$. 
Let $m$ be a positive integer, 
\[ 
\mathcal{O}_{\mathbb{P}(\mathcal{E})}(1)\otimes \mathcal{O}_{\mathbb{P}(\mathcal{E})}(\pi^*A_T/m) 
\] 
is an ample $\qq$-line bundle on
$\pp(\mathcal{E})$, for $m$ large enough, which is $G$-invariant.
Thus, the affine variety 
\[
Y = {\rm Spec}
\left( 
\bigoplus_{n\in \zz} 
H^0\left(
\mathbb{P}(\mathcal{E}), 
\mathcal{O}_{\mathbb{P}(\mathcal{E})}(n)\otimes
\mathcal{O}_{\mathbb{P}(\mathcal{E})}(n\pi^*A_T/m)
\right) 
\right) 
\] 
admits a $G$-action which fixes the vertex $y\in Y$ of the $\mathbb{G}_m$-action induced by the $\mathbb{Z}$-grading.
Observe that an element 
\[
f\in H^0\left(\mathbb{P}(\mathcal{E}), 
\mathcal{O}_{\mathbb{P}(\mathcal{E})}(n)
\otimes
\mathcal{O}_{\mathbb{P}(\mathcal{E})}(n\pi^*A_T/m)\right)
\] 
is preserved by the action of $G$
if and only if it is constant along the fibers
of $\pp(\mathcal{E})\rightarrow T$.
In other words, the $G$-invariant elements 
have the form 
$f=\pi^*g$ for some
$g\in H^0(T,\mathcal{O}_T(nA_T/m))$.
We conclude that there is an isomorphism 
\[
Y/\!\!/G \simeq 
{\rm Spec}\left( 
\bigoplus_{n\in \zz}H^0(T,\mathcal{O}_T(nA_T/m))
\right)=X.
\] 
Thus, the quotient $Y/\!\!/G$ is isomorphic to the cone over a $\qq$-ample toric divisor on a
smooth projective toric variety. 
Hence, $(X;x)$ is a toric singularity 
of dimension $n$.
It suffices to check
that $(Y;y)$ is not of klt type. 

We proceed by contradiction.
Assume that $(Y;y)$ is of klt type. 
Let $\widetilde{Y}\rightarrow Y$ be the blow-up of $Y$ at the maximal ideal of $y$. Then, the exceptional divisor $E$ of $\phi\colon \widetilde{Y}\rightarrow Y$ is isomorphic
to $\pp(\mathcal{E})$. 
Since $\pp(\mathcal{E})$ is smooth, we conclude that $\widetilde{Y}$ has $\qq$-factorial singularities.
Let $\Delta_Y$ be the effective divisor
through $y$ for which $(Y,\Delta_Y;y)$ has klt singularities.
Let $\Delta_{\widetilde{Y}}$ be the strict transform of $\Delta_Y$ on $\widetilde{Y}$.
We write
\[
\phi^*(K_Y+\Delta_Y)=
K_{\widetilde{Y}}+\Delta_{\widetilde{Y}}+(1-a)E,
\]
for some positive number $a$.
Note that $\Delta_{\widetilde{Y}}$ is ample over $Y$ as $\rho(\widetilde{Y}/Y)=1$.
We conclude that $K_{\widetilde{Y}}+(1-a)E$ is antiample over $Y$, 
so $K_Y+E$ is antiample over $Y$ as well.
Since $E$ is smooth, we conclude that
the pair $(Y,E)$ is plt.
Thus, the pair $K_E+\Delta_E=(K_Y+E)|_E$, obtained by performing adjunction to $E$, is log Fano.
In particular, the projective variety
$\mathbb{P}(\mathcal{E})\simeq E$ is of Fano type.
Thus, the Cox ring of $\mathbb{P}(\mathcal{E})$ is finitely generated by~\cite[Corollary 1.9]{BCHM10}.
This leads to a contradiction.
We conclude that $(Y;y)$
is not a klt type singularity.
\end{proof}

In the previous theorem, the action is not free outside the point $y\in Y$.
We show that this can be improved in the following statement.

\begin{proposition}\label{prop:gln-cover-free}
For any $n\geq 2$, there exists a $(n+1)$-dimensional toric singularity 
$(X;x)$ that admits a 
$\mathbb{P}{\rm GL}_r(\kk)$-cover 
$Y\rightarrow X$, satisfying the following conditions:
\begin{enumerate}
    \item we have that $r=3$ if $n\in \{2,3\}$ and $r=n$ otherwise,  
    \item the germ $(Y;y)$ has dimension $n+r^2$, 
    \item the action of
    $\mathbb{P}{\rm GL}_r(\kk)$ on $Y$
    is free on a dense open set, and
    \item the singularity $(Y;y)$ is not of klt type.
\end{enumerate}
\end{proposition}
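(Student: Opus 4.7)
The plan is to modify the construction in the proof of Theorem~\ref{thm:gln-cover-general} by replacing the projective bundle $\pp(\mathcal{E})\to T$ with a larger bundle on whose generic fiber $\pp{\rm GL}_r(\kk)$ acts freely. The natural candidate is $\pp(\mathcal{F})\to T$ with $\mathcal{F}:=\mathcal{E}^{\oplus r}\simeq\mathcal{E}\otimes\kk^r$, on which ${\rm GL}_r(\kk)$ acts through the $\kk^r$-factor; since the centre $\mathbb{G}_m\subset{\rm GL}_r(\kk)$ acts by global scalars, this descends to a $\pp{\rm GL}_r(\kk)$-action on $\pp(\mathcal{F})$. A fibrewise computation identifies the free locus as the open subset $P\subset\pp(\mathcal{F})$ whose points are the isomorphisms $\phi\colon\kk^r\xrightarrow{\sim}\mathcal{E}|_t$, so the generic $\pp{\rm GL}_r(\kk)$-orbit has dimension $r^{2}-1=\dim\pp{\rm GL}_r(\kk)$.

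I would then choose $T$, $\mathcal{E}$ and $r$ as in the proof of Theorem~\ref{thm:gln-cover-general} and an ample toric divisor $A_T$ on $T$. Equipping the $\qq$-line bundle $L:=\oo_{\pp(\mathcal{F})}(r)\otimes\pi_{\mathcal{F}}^{*}A_T^{1/m}$ with the natural ${\rm GL}_r(\kk)$-linearisation twisted by $\det^{-1}$ makes the centre act trivially, and for $m$ large $L$ is ample and $\pp{\rm GL}_r(\kk)$-equivariant. I then set
\[
Y:=\Spec\Bigl(\bigoplus_{k\geq 0}H^{0}(\pp(\mathcal{F}),L^{\otimes k})\Bigr),
\]
which is affine of dimension $\dim\pp(\mathcal{F})+1=n+r^{2}$ and carries commuting $\pp{\rm GL}_r(\kk)$- and grading $\mathbb{G}_m$-actions, both fixing the cone vertex $y$; this handles condition~(2). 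Classical invariant theory for ${\rm GL}_r(\kk)$ acting on $\mathrm{Hom}(\kk^r,\mathcal{E}|_t)$ by precomposition shows that the only contribution to the invariants comes from powers of the determinant section $\det\colon\mathcal{E}^{\oplus r}\to\det\mathcal{E}$, giving a canonical isomorphism
\[
X:=Y/\!\!/\pp{\rm GL}_r(\kk)\;\simeq\;\Spec\Bigl(\bigoplus_{k\geq 0}H^{0}\bigl(T,(A_T^{1/m}\otimes(\det\mathcal{E})^{-1})^{\otimes k}\bigr)\Bigr).
\]
For $m$ sufficiently large this is the affine cone over $T$ with respect to an ample $\qq$-divisor, hence an $(n+1)$-dimensional toric singularity, yielding condition~(1); and the free action on the dense open preimage of $P$ in $Y\setminus\{y\}$ yields condition~(3).

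For condition~(4) I would follow the argument of Theorem~\ref{thm:gln-cover-general} verbatim. Blowing up $y\in Y$ produces $\phi\colon\widetilde Y\to Y$ with smooth exceptional divisor $E\simeq\pp(\mathcal{F})=\pp(\mathcal{E}^{\oplus r})$ and relative Picard rank one, so the plt-adjunction argument shows that if $(Y;y)$ were of klt type then $\pp(\mathcal{E}^{\oplus r})$ would be of Fano type and, by~\cite[Corollary~1.9]{BCHM10}, would have a finitely generated Cox ring. It therefore suffices to arrange the initial choice of $T$ and $\mathcal{E}$ so that the Cox ring of $\pp(\mathcal{E}^{\oplus r})$ is \emph{not} finitely generated.

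The hard part will be precisely this final requirement. Theorem~\ref{thm:gln-cover-general} only needed the non-finite generation of the Cox ring of $\pp(\mathcal{E})$, supplied by~\cite[Theorem~1.1]{GHPS12}, and the analogous statement for $\pp(\mathcal{E}^{\oplus r})$ does not follow formally. I would expect to obtain it by a reapplication of~\cite{GHPS12} to vector bundles of the form $\mathcal{E}^{\oplus r}$, possibly enlarging the Picard rank of the toric base $T$ (while keeping $\dim T=n$ fixed) so that the non-finite-generation criterion of that paper applies; verifying this compatibility is where the concrete technical work of the proof lies, while every other step is a direct adaptation of the argument for Theorem~\ref{thm:gln-cover-general}.
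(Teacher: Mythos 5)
Your construction is essentially a concrete realization of the paper's: in the paper's proof one takes the principal $\pp{\rm GL}_r(\kk)$-bundle $Y_0\to T$ associated to $\pp(\mathcal{E})$ and chooses an \emph{arbitrary} $\pp{\rm GL}_r(\kk)$-equivariant projective completion $Y_0\hookrightarrow\bar Y$ over $T$; your $\pp(\mathcal{F})=\pp(\mathcal{E}^{\oplus r})=\pp\bigl(\mathcal{H}om(\kk^r,\mathcal{E})\bigr)$ is exactly such a completion, with your open set $P$ of fibrewise isomorphisms playing the role of $Y_0$. The dimension count, the identification of $X$ as the affine cone over $T$, the freeness on a dense open subset, and the blow-up/plt-adjunction reduction to the question of whether the exceptional divisor is a Mori dream space are all correct and match the paper's reasoning.

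The genuine gap is the step you flag yourself at the end: showing that $\pp(\mathcal{E}^{\oplus r})$ is not a Mori dream space. Your proposed route---reapplying \cite[Theorem 1.1]{GHPS12} directly to the bundle $\mathcal{E}^{\oplus r}$---is not how the paper closes this, and it is not clear that it can succeed: the non-finite-generation result of \cite{GHPS12} is established for specific bundles $\mathcal{E}$ of the correct rank $r$, and there is no formal mechanism that transfers it to the rank-$r^2$ bundle $\mathcal{E}^{\oplus r}$. The missing tool is Okawa's theorem on images of Mori dream spaces \cite[Theorem 1.1]{Oka16}. The paper observes that the quotient of $\bar Y\times\pp^{r-1}$ by the diagonal $\pp{\rm GL}_r(\kk)$-action $g\cdot(u,v)=(g^{-1}u,\rho(g)v)$ recovers $\pp(\mathcal{E})$ (the associated $\pp^{r-1}$-bundle of $Y_0$), giving a dominant map onto $\pp(\mathcal{E})$; if $\bar Y$ were a Mori dream space then so would be $\bar Y\times\pp^{r-1}$, and Okawa's theorem would force $\pp(\mathcal{E})$ to be a Mori dream space, contradicting the choice of $\mathcal{E}$. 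So the conclusion you need is reduced back to the known non-finite-generation of $\Cox(\pp(\mathcal{E}))$ rather than requiring a fresh application of \cite{GHPS12}. Without this reduction (or some replacement for it), your proof does not close. One further small point: to make the centre of ${\rm GL}_r(\kk)$ act trivially on $L$ you should twist $\oo_{\pp(\mathcal{F})}(r)$ by $\det$ rather than by $\det^{-1}$ (the centre scales $\mathcal{F}$ by $\lambda$, hence $\oo(r)$ by $\lambda^{-r}$ and $\det\kk^r$ by $\lambda^{r}$), although this sign does not affect the structure of the argument.
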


\begin{proof}
Let $T$ be a $n$-dimensional smooth projective toric variety.
Let $\mathbb{P}(\mathcal{E})$ be the rank $r$ vector bundle over $T$
considered in the proof of Theorem~\ref{thm:gln-cover-general}.
Hence, the variety $\mathbb{P}(\mathcal{E})$ is not a Mori dream space.
Let $Y_0\rightarrow T$ be the associated principal
$\mathbb{P}{\rm GL}_r(\kk)$-bundle.
Consider a $\mathbb{P}{\rm GL}_r(\kk)$-equivariant projectivization
$Y_0\hookrightarrow \bar{Y}$
with a relatively ample line bundle
$\mathcal{O}_{\bar{Y}}(1)$ over $T$.
Observe that the action of $\mathbb{P}{\rm GL}_r(\kk)$ on 
$\bar{Y}$ is free on the open subset $Y_0$.
We claim that $\bar{Y}$ is not a Mori dream space. 
Let $\rho\colon \mathbb{P}{\rm GL}_r(\kk)\rightarrow {\rm Aut}(\pp^r)$
be the standard representation. 
Then $\bar{Y}\times \pp^r$ admits the action of
$G$ given by $g\cdot (u,v)=(g^{-1}u,\rho(g)v)$.
The quotient of $\bar{Y}\times \pp^r$ by $G$ is isomorphic $\mathbb{P}(\mathcal{E})$.
If $\bar{Y}$ is a Mori dream space, then $\mathbb{P}(\mathcal{E})$ is also a Mori dream space by~\cite[Theorem 1.1]{Oka16}.
This leads to a contradiction.
We conclude that $\bar{Y}$ is not a Mori dream space.
The rest of the proof proceeds as in Theorem~\ref{thm:gln-cover-general},
by replacing 
$\mathbb{P}(\mathcal{E})$
with $\bar{Y}$.
\end{proof}

Now, we turn to prove that 
$G$-covers of klt type singularities
are again of klt type,
provided that $G$ is a finite extension of a torus. 
The following is a generalization of Theorem~\ref{introthm:torus-finite-cover}
which allows ramification over codimension one points. 

\begin{theorem}\label{thm:torus-finite-cover-klt-sing}
Let $(X,\Delta;x)$ be a klt type singularity.
Let $G$ be a finite extension of a torus.
Let $\pi\colon Y\rightarrow X$ be a
$G$-quasi-torsor.
Then, the variety $Y$ is of klt type.
\end{theorem}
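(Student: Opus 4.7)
The plan is to factor the $G$-quasi-torsor $\pi \colon Y \to X$ into a torus quasi-torsor followed by a finite quasi-torsor, and then handle each factor using the preliminary lemmas on finite and torus covers separately. Since $G$ is a finite extension of a torus, write $1 \to \mathbb{T} \to G \to F \to 1$ with $\mathbb{T}$ the connected component and $F$ finite. I would form the intermediate quotient $Z := Y /\!\!/ \mathbb{T}$, yielding a factorization
\[
Y \xrightarrow{\psi} Z \xrightarrow{\varphi} X.
\]
The first step is to verify that $\psi$ is a $\mathbb{T}$-quasi-torsor and $\varphi$ is a finite $F$-quasi-torsor: over the codimension $\geq 2$ open subset of $X$ on which $Y \to X$ is a $G$-torsor, the corresponding open in $Y$ is a $G$-torsor that factors through a $\mathbb{T}$-torsor over an $F$-torsor, and the invertible-function condition of Definition~\ref{def:quasi-torsor} transfers because $\mathbb{T}$ is normal in $G$ and $F$ is its quotient.

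For the finite piece, Lemma~\ref{lem:klt-type-finite-cover-quotient}(2) applied to $\varphi \colon Z \to X$ immediately gives that $(Z;z)$ is of klt type at every preimage $z$ of $x$.

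For the torus piece, I would work \'etale-locally on $Z$. Shrinking $Z$ to an affine neighborhood of $z$, Lemma~\ref{lem:G-cover-is-local-Cox} produces a finitely generated free subgroup $N \leq \WDiv(Z)$ together with a $\mathbb{T}$-equivariant isomorphism
\[
Y \simeq \Spec_Z\!\left(\bigoplus_{D \in N} \mathcal{O}_Z(D)\right).
\]
Let $N^0 := \ker(N \to \Cl(Z;z))$; since klt type singularities have finite local class group, $N^0$ has full rank $k = \dim \mathbb{T}$. After fixing a character $\chi \colon N^0 \to \kk(Z)^*$ as in Definition~\ref{def:local-Cox-ring}, Lemma~\ref{lem:klt-type-torus-cover} yields that the spectrum $W := \Spec(\Cox(Z;z)^{\rm aff}_{N,\chi})$ has klt type singularities.

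The final step, and the main technical hurdle, is to bridge between $Y$ and $W$. Each $E \in N^0$ is principal near $z$ with trivializing rational function $\chi(E)$, so the collection of characters should assemble into an \'etale-local $\mathbb{T}_{N^0}$-equivariant identification
\[
\bigoplus_{D \in N} \mathcal{O}_Z(D) \simeq \Cox(Z;z)^{\rm aff}_{N,\chi} \otimes_{\kk} \kk[N^0],
\]
exhibiting $Y$ \'etale-locally as a trivial $\mathbb{T}_{N^0}$-torsor over $W$, where $\mathbb{T}_{N^0} := \Hom(N^0, \mathbb{G}_m)$. Since smooth morphisms preserve klt type and a torus is smooth, $Y$ will be klt type \'etale-locally around every point, and Proposition~\ref{prop:klt-etale} then globalizes this to a boundary on $Y$ witnessing the klt type condition. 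The hardest part will be the careful verification of this product decomposition with compatible torus equivariance and the correct handling of the character data; everything else reduces to a direct application of the listed preliminary lemmas.
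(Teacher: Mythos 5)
Your proof follows essentially the same route as the paper's: reduce the statement via the finite part of $G$ to the torus case, invoke Lemma~\ref{lem:G-cover-is-local-Cox} to present $Y$ as a relative spectrum over a subgroup $N$, form the local Cox ring via a character on $N^0$, use Lemma~\ref{lem:klt-type-torus-cover} to see the Cox spectrum is of klt type, and then exhibit $Y$ as a torus torsor over it before globalizing with Proposition~\ref{prop:klt-etale}. The one genuine difference is how the final bridge between $Y$ and the Cox spectrum is crossed. You sketch an explicit $\mathbb{T}_{N^0}$-equivariant tensor decomposition $\bigoplus_{D \in N}\mathcal{O}_Z(D) \simeq \operatorname{Cox}(Z;z)^{\rm aff}_{N,\chi}\otimes_{\kk}\kk[N^0]$, and rightly flag verification of its compatibility with the ring structure and $\mathbb{T}$-action as the ``main technical hurdle.'' The paper sidesteps those compatibility checks by applying Lemma~\ref{lem:G-cover-is-local-Cox} a second time, now to the torus cover $Y \to Y_0$, and then observing that the resulting group of Weil divisors on $Y_0$ can be taken inside $\CaDiv(Y_0)$, since the generators of $N$ pull back to Cartier divisors on the Cox spectrum. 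That observation is what promotes $Y \to Y_0$ to an honest torsor with no need to build the explicit product decomposition. If you want to keep your formulation, the missing verification is real and needs to be supplied; the Cartier observation is the more economical path.

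One factual slip worth correcting: you assert that ``klt type singularities have finite local class group'' and conclude $N^0$ has full rank $k = \dim\mathbb{T}$. Local class groups of klt type singularities are finitely generated (\cite[Theorem 3.27]{BM21}), not finite; the cone over a smooth Fano of Picard number $\geq 2$ already has local class group with positive free rank. Consequently $N^0$ can have rank strictly less than $k$, and $\mathbb{T}_{N^0}$ can be a proper quotient torus of $\mathbb{T}_N$. As it happens this does not invalidate your argument — the dimensions of $W$ and $\mathbb{T}_{N^0}$ still add up to $\dim Y$ — but the parenthetical justification is wrong and should be removed.
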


\begin{proof}
By Lemma~\ref{lem:klt-type-finite-cover-quotient}, we know that klt type singularities
are preserved under finite covers
and finite quotients.
Hence, we may assume that $G\simeq \mathbb{G}_m^k$ for some $k$.
By Lemma~\ref{lem:G-cover-is-local-Cox}, we know that there exists a 
finitely generated subgroup
$N\leqslant {\rm WDiv}(X,\Delta)$
such that the isomorphism
\[
Y \simeq {\rm Spec}\left( \bigoplus_{D\in N} H^0(X,\mathcal{O}_X(D)) \right)
\]
holds.
Let $\pi\colon N\rightarrow {\rm Cl}(X,\Delta;x)$ be the induced homomorphism. 
Let $N_0$ be the kernel of $\pi$.
We can choose a homomorphism
$\chi\colon N_0\rightarrow \kk(X)^*$
so that
${\rm div}(\chi(E))=E$ for every $E\in N_0$.
Then, we can define the local-affine Cox ring of $(X,\Delta;x)$ associated to the data
$N,\chi$ as in Definition~\ref{def:local-Cox-ring}.
We denote this ring by
\[
{\rm Cox}(X,\Delta;x)^{\rm aff}_{N,\chi}
\] 
and 
we denote by $Y_0$ its spectrum.
By Lemma~\ref{lem:klt-type-torus-cover}, we know that $Y_0$ has klt type singularities.
Applying Lemma~\ref{lem:G-cover-is-local-Cox}
 to the torus cover 
$Y\rightarrow Y_0$, 
we can find a free finitely generated subgroup $N_1\leqslant {\rm CaDiv}(Y_0)$ 
for which the isomorphism 
\[
Y \simeq {\rm Spec}\left( 
\bigoplus_{D\in N_1} H^0(Y_0,\mathcal{O}_{Y_0}(D))
\right) 
\]
holds. 
Observe that we can choose the divisors of $N_1$ to be Cartier on $Y_0$.
Indeed, these divisors correspond to the divisors of $N$, which become Cartier on $Y_0$.
Thus, we conclude that the 
torus quotient 
$Y\rightarrow Y_0$
is a principal torus cover.
Hence, the variety $Y$ has klt type singularities, since the klt type property
is locally \'etale  by Proposition~\ref{prop:klt-etale}.
\end{proof}

\section{Weil divisors modulo Cartier divisors}

In this section, we study the group 
of Weil divisors modulo Cartier divisors, which is called the \emph{local class group} in~\cite{BdFFU15}.
In general, the group 
${\rm WDiv}(X)/{\rm CaDiv}(X)$ is not finitely generated.
This group is trivial if and only if $X$ is locally factorial.
In~\cite{BGS11}, the authors prove that the 
$\qq$-factorial and factorial locus of an algebraic variety
are open.
In~\cite[Section 14]{Kol20}, the author studies the non-$\qq$-Cartier loci
of Weil divisors.
We recall the following proposition due to Koll\'ar (see~\cite[Proposition 138]{Kol20}).

\begin{proposition}\label{prop:kollar-Cartier}
Let $X$ be a normal proper variety.
Let $Z\subset X$ be an irreducible variety.
There exists a dense open subset $Z^0\subset Z$ such that the following holds.
Let $D$ be a Weil divisor that is Cartier at the generic point $\eta_Z$ of $Z$. 
Then, the divisor $D$ is Cartier at every closed point of $Z^0$.
\end{proposition}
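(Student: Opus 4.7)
The goal is to produce a single dense open $Z^0\subset Z$ that works uniformly for every Weil divisor $D$ Cartier at $\eta_Z$, which is the real content of the proposition — a version with $Z^0$ depending on $D$ is immediate from openness of the Cartier locus of any single divisor.

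First, I would make the standard reduction: if $Z$ meets the $\qq$-factorial locus of $X$ (which is open by \cite{BGS11}), then every Weil divisor is Cartier at a factorial point, so $Z^0:=Z\cap X^{\mathrm{fact}}$ does the job. Hence we may assume $Z$ lies entirely in the non-factorial locus of $X$, so in particular $\eta_Z$ is a non-factorial point.

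Next, I would record the basic spreading-out observation for an \emph{individual} divisor: since $\mathcal{O}_{X,\eta_Z}$ is local, any $D$ Cartier at $\eta_Z$ is principal on $\Spec \mathcal{O}_{X,\eta_Z}$, so there is $f\in\kk(X)^*$ with $D-\ddivv(f)$ having no component containing $Z$. Writing $D':=D-\ddivv(f)$, linear equivalence preserves the Cartier property pointwise, so $D$ is Cartier at every point of $Z\setminus \Supp(D')$, a proper closed subset of $Z$. This already yields the conclusion with a $Z^0$ \emph{depending on} $D$; the entire difficulty is removing this dependence.

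For the uniform bound, my plan is to analyse the sheaf of abelian groups $\mathcal{G}:=\WDiv_X/\CaDiv_X$, whose stalk at a point $z$ is the local class group $\Cl(\mathcal{O}_{X,z})$, along the subvariety $Z$. Using Noetherianity of $X$, one stratifies $X$ into finitely many locally closed pieces on which the specialization maps between local class groups behave constantly; let $W$ be the unique stratum whose closure contains $\eta_Z$, and set $Z^0:=Z\cap W$, which is dense open in $Z$. For any closed point $z\in Z^0$ and any $D$ Cartier at $\eta_Z$, the constancy of $\mathcal{G}$ along $W$ forces the specialization map $\Cl(\mathcal{O}_{X,z})\to \Cl(\mathcal{O}_{X,\eta_Z})$ to be injective on the image of $\Cl(X)$; since the class of $D$ vanishes at $\eta_Z$ by hypothesis, it must vanish at $z$ as well, i.e.\ $D$ is Cartier at $z$.

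The main obstacle is precisely the existence of this stratification, i.e.\ showing that although the set of Weil divisors Cartier at $\eta_Z$ is a priori infinite, the collection of non-Cartier loci $\mathrm{NC}(D)\cap Z$ is governed by finitely many Noetherian invariants and hence sits inside a proper closed subset of $Z$. This is the technical heart of Kollár's argument: it combines the closedness of the non-Cartier locus of any single reflexive rank-one sheaf with a Noetherian reduction (in the spirit of \cite{BGS11}) applied to a tautological family of divisors on the proper variety $X$, which replaces the infinite quantifier over $D$ by a finite one. Properness of $X$ enters here to guarantee that the relevant Hilbert-type parameter space is of finite type, so the reduction can actually be carried out.
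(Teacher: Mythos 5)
Your preliminary reductions are fine (openness of the factorial locus, and the observation that for a single $D$ Cartier at $\eta_Z$ one can subtract $\ddivv(f)$ and get a $D$-dependent open set), and you correctly identify that the entire content of the proposition is the uniformity of $Z^0$ over all $D$. But precisely that step is not proved in your proposal: you posit a stratification of $X$ into finitely many locally closed pieces "on which the specialization maps between local class groups behave constantly," and then you yourself concede that the existence of this stratification is "the main obstacle" and "the technical heart of Koll\'ar's argument," offering only a gesture toward a "tautological family of divisors" and a "Hilbert-type parameter space." Nothing in the proposal constructs these objects or shows that a finite-type parameter space captures the a priori unbounded collection of Weil divisors Cartier at $\eta_Z$ (note that Cartier-ness at a point is not obviously constant in algebraically equivalent families, and for an arbitrary normal proper $X$ the local class groups $\Cl(\mathcal{O}_{X,z})$ need not be finitely generated, so constructibility of $\WDiv_X/\CaDiv_X$ is not available from standard results such as the ones the paper later invokes under rationality hypotheses). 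Moreover, even granting such a stratification, "constancy of the stalks along $W$" does not by itself yield injectivity of the specialization map $\Cl(\mathcal{O}_{X,z})\rightarrow \Cl(\mathcal{O}_{X,\eta_Z})$ on the relevant classes; that is an additional claim requiring proof. So there is a genuine gap: the uniform statement is asserted, not established.

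For comparison: the paper does not prove this proposition at all; it is quoted verbatim from Koll\'ar (\cite[Proposition 138]{Kol20}), and the authors only use it as an input to Theorem~\ref{thm:wdiv-cdiv} via Noetherian induction. A blind proof attempt therefore has to either reproduce Koll\'ar's argument (which rests on his analysis of non-Cartier loci of reflexive rank-one sheaves and attendant flattening/boundedness results, not merely on openness statements in the spirit of \cite{BGS11}) or supply an independent complete argument; your proposal does neither, since its decisive step is deferred to the very result being proved.
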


Due to the previous proposition, 
we can prove the following theorem 
using Noetherian induction.

\begin{theorem}\label{thm:wdiv-cdiv}
Let $X$ be a normal variety. 
There are finitely many closed points
$x_1,\dots,x_r\in X$ such that the homomorphism 
\[
{\rm WDiv}(X)/{\rm CaDiv}(X) \rightarrow \bigoplus_{i=1}^r {\rm Cl}(X_{x_i}),
\]
is a monomorphism.
\end{theorem}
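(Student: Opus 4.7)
The map $[D] \mapsto ([D]_{x_i})_i$ is well-defined since Cartier divisors become principal in local rings, and its injectivity is equivalent to the following: a Weil divisor $D$ on $X$ that is Cartier at each $x_i$ must be Cartier everywhere on $X$. Since the non-Cartier locus of a Weil divisor is a closed subset of the Noetherian scheme $X$, and any nonempty closed subset of a Noetherian scheme contains a closed point, $D$ is Cartier on $X$ if and only if it is Cartier at every closed point of $X$. So the problem reduces to producing a finite set of closed points that detects non-Cartier-ness.

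The plan is to prove, by Noetherian induction on closed subsets $W\subset X$, the following strengthened statement: for every closed $W\subset X$, there exists a finite set $S_W$ of closed points of $W$ such that any Weil divisor on $X$ that is Cartier at each point of $S_W$ is Cartier at every closed point of $W$. Applying this to $W=X$ gives the theorem. The base case $W=\emptyset$ is trivial.

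For the inductive step, decompose $W=W_1\cup\dots\cup W_m$ into irreducible components. Apply Proposition~\ref{prop:kollar-Cartier} to each $W_i$ to obtain a dense open $W_i^0\subset W_i$, and pick any closed point $x_i\in W_i^0$. Set $W'=W\setminus\bigcup_i W_i^0$; each intersection $W_i\cap W'$ is contained in $W_i\setminus W_i^0$, hence properly contained in $W_i$, so $W'\subsetneq W$ is a proper closed subset. By induction, there is a finite set $S_{W'}\subset W'$ with the required property; put $S_W:=\{x_1,\dots,x_m\}\cup S_{W'}$. To verify that $S_W$ works, let $D$ be a Weil divisor Cartier at each point of $S_W$, and let $y\in W$ be any closed point. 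If $y\in W'$, induction gives Cartier-ness at $y$. If $y\in W_i^0$, then the Cartier locus of $D$ is an open neighborhood of $x_i$, which meets $W_i$ in a nonempty open subset and therefore contains $\eta_{W_i}$; so $D$ is Cartier at $\eta_{W_i}$, and Proposition~\ref{prop:kollar-Cartier} upgrades this to $D$ Cartier at every closed point of $W_i^0$, including $y$.

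The main technical subtlety is that Proposition~\ref{prop:kollar-Cartier} is stated for normal \emph{proper} varieties, whereas $X$ need not be proper. Since Cartier-ness is a Zariski-local condition, one either verifies that the cited proof adapts to any normal variety, or first chooses a compactification $\bar X\supset X$ and extends Weil divisors by Zariski closure; running the induction on $\bar X$ and retaining only the test points that lie in $X$ (which we may arrange in the inductive step using the density of $X$ in $\bar X$ when selecting $x_i\in W_i^0$) yields the desired finite set inside $X$.
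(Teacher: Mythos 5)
Your overall strategy matches the paper's: reduce the question to detecting non-Cartier loci at finitely many closed points, then apply Proposition~\ref{prop:kollar-Cartier} together with Noetherian induction, using a compactification to meet the properness hypothesis. The paper's only further reduction is to pass first to an affine open cover before compactifying, and it runs the descent component-by-component (recursing separately on each $Z_i\setminus Z_i^0$), whereas you recurse on the union; this is a genuinely equivalent route and the core argument is the same.

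There is, however, a real technical slip in your inductive step. You set $W' = W\setminus\bigcup_i W_i^0$ and assert that $W'$ is a proper \emph{closed} subset. Properness is fine, but closedness can fail: the sets $W_i^0$ are open in $W_i$ but need not be open in $W$ once $W$ is reducible, so their union need not be open in $W$. A concrete counterexample: in $\mathbb{A}^3_{x,y,z}$ let $W_1=\{z=0\}$, $W_2=\{y=0\}$, $W=W_1\cup W_2$, and take the dense opens
$W_1^0 = W_1\setminus\{x=1,\,z=0\}$ and $W_2^0 = W_2\setminus\bigl(\{y=z=0\}\cup\{y=0,\,z=x^2\}\bigr)$.
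Then the punctured curve $\{(t,0,t^2):t\neq 0\}$ lies in $W'$ (each such point avoids $W_1$ entirely and lies in $W_2\setminus W_2^0$), so $(0,0,0)\in\overline{W'}$; but $(0,0,0)\in W_1^0$ and hence $(0,0,0)\notin W'$. So $W'$ is not closed, and the Noetherian induction as written does not apply to it. The fix is easy and worth stating explicitly: either shrink each $W_i^0$ to $W_i^0\setminus\bigcup_{j\neq i}W_j$ (still a dense open of $W_i$, and now open in $W$, so the union is open and $W'$ becomes closed), or recurse instead on the closed set $W'':=\bigcup_i(W_i\setminus W_i^0)\supset W'$, observing that $W\setminus W''\subset\bigcup_i W_i^0$ so the verification step goes through unchanged. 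The paper sidesteps this by descending on each $Z_i\setminus Z_i^0$ individually, each of which is automatically closed. Finally, a small omission you share with the paper: after taking the projective closure you should normalize, since Proposition~\ref{prop:kollar-Cartier} requires the ambient proper variety to be normal; this is harmless as $X$ remains an open subset of the normalization.
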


\begin{proof}
Let $U_1,\dots,U_s$ be an affine open cover of $X$.
Observe that the homomorphism
\[
{\rm WDiv}(X)/{\rm CaDiv}(X)\rightarrow 
\bigoplus_{i=1}^s {\rm WDiv}(U_i)/{\rm CaDiv}(U_i),
\]
induced by restricting
$D\mapsto (D|_{U_1},\dots,D|_{U_s})$, 
is a monomorphism.
Hence, it suffices to prove the statement for an affine variety.

Without loss of generality, we may assume that $X$ is
affine. Let $\bar{X}$ be its closure in a projective space.
By Proposition~\ref{prop:kollar-Cartier}, there exists an open set $\bar{X}^0\subset \bar{X}$ 
so that every Weil divisor on $\bar{X}$ is Cartier
at every closed point of $\bar{X}^0$.
Let $Z_1,\dots,Z_k$ be the irreducible components of $\bar{X}\setminus \bar{X}^0$.
For each $i\in \{1,\dots,k\}$, we choose
$Z_i^0$ as in the statement of Proposition~\ref{prop:kollar-Cartier}.
Then, we proceed inductively with the 
irreducible components
of each $Z_i\setminus Z_i^0$. 

We obtain a finite set of irreducible subvarieties 
$Z_1,\dots,Z_{r_0} \subset \bar{X}$ 
and dense open subsets 
$Z_i^0 \subset Z_i$
so that the following set-theoretic equality holds
\[
\bar{X} = \bigcup_{i=1}^{r_0} Z_i^0.
\]
We may assume that there exists $r\leq r_0$ 
for which 
\begin{equation}\label{eq:covering-affine}
X=\bigcup_{i=1}^r Z_i^0\cap X,
\end{equation}
and each intersection $Z_i^0\cap X$ is non-empty
for $i\in \{1,\dots,r\}$.
For each $i\in \{1,\dots,r\}$, we choose a closed point $x_i \in Z_i^0 \cap X$.
The homomorphism 
\begin{equation}\label{eq:hom-cl} 
{\rm WDiv}(X)/{\rm CaDiv}(X) \rightarrow \bigoplus_{i=1}^r {\rm Cl}(X_{x_i}),
\end{equation} 
is well-defined. Indeed, Cartier divisors are mapped to the zero element on the right-hand side.

It suffices to prove that~\eqref{eq:hom-cl} is a monomorphism. 
Let $D$ be a Weil divisor on $X$.
Let $\bar{D}$ be the closure of $D$ on $\bar{X}$.
Assume that $[D_{x_i}]=0 \in {\rm Cl}(X_{x_i})$ for every $i\in \{1,\dots,r\}$.
Then, $\bar{D}$ is Cartier at the generic point 
$\eta_{Z_i}$ of $Z_i$ for every $i\in \{1,\dots,r\}$. 
By Proposition~\ref{prop:kollar-Cartier}, we conclude that $\bar{D}$ is Cartier at every closed point of 
$Z_i^0$ for every $i\in \{1,\dots,r\}$. 
In particular, $D=\bar{D}\cap X$ is Cartier
at every closed point of
$Z_i^0\cap X$. 
By equality~\eqref{eq:covering-affine}, we conclude that 
$D$ is Cartier at every closed point of $X$.
This means that 
$[D]=0\in {\rm WDiv}(X)/{\rm CaDiv}(X)$.
This finishes the proof of the theorem.
\end{proof}

If the variety has rational singularities,
then we conclude that the group
${\rm WDiv}(X)/{\rm CaDiv}(X)$ is finitely generated. 
In particular, we have the following statement.

\begin{theorem}\label{thm:wdiv-cdiv-klt-type}
Let $X$ be a variety with klt type singularities. 
Then, the group ${\rm WDiv}(X)/{\rm CaDiv}(X)$ is finitely generated.
\end{theorem}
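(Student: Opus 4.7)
The plan is to combine the structural Theorem~\ref{thm:wdiv-cdiv} with the known finite generation of the local class group at a klt type singularity. Concretely, by Theorem~\ref{thm:wdiv-cdiv} we obtain finitely many closed points $x_1,\dots,x_r \in X$ and a monomorphism
\[
{\rm WDiv}(X)/{\rm CaDiv}(X) \hookrightarrow \bigoplus_{i=1}^r {\rm Cl}(X_{x_i}).
\]
Since subgroups of finitely generated abelian groups are finitely generated, it will suffice to show that ${\rm Cl}(X_{x_i})$ is finitely generated for each $i$.

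Next, I would verify that at any klt type singularity $(X;x)$ the local class group ${\rm Cl}(X;x)$ is finitely generated. The klt type hypothesis means there exists an effective boundary $\Delta$ with $(X,\Delta;x)$ klt, and this is the input one needs: finite generation of ${\rm Cl}(X;x)$ for klt singularities is established in the literature used elsewhere in the paper (for instance, it is the tacit hypothesis in Definition~\ref{def:local-Cox-ring}, and is precisely what allows the local Cox ring construction to run). The argument proceeds by reducing to the analogous statement for the regional fundamental group and divisorial class group of a klt singularity, which is known to be finitely generated (it is a consequence of the boundedness results underlying~\cite{BGLM21} and the finiteness of the regional fundamental group established for klt singularities).

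Combining these two ingredients then finishes the proof: the right hand side $\bigoplus_{i=1}^r {\rm Cl}(X_{x_i})$ is a finite direct sum of finitely generated abelian groups and therefore finitely generated, so the subgroup ${\rm WDiv}(X)/{\rm CaDiv}(X)$ is finitely generated as well.

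The main obstacle I anticipate is not the structural reduction — that is handed to us by Theorem~\ref{thm:wdiv-cdiv} — but rather the need to invoke finite generation of ${\rm Cl}(X;x)$ for klt type singularities cleanly. If one does not wish to appeal to heavy machinery, one can instead note that it is enough to control a single Weil divisor at a time: each class $[D] \in {\rm WDiv}(X)/{\rm CaDiv}(X)$ pulls back to a finite tuple $([D_{x_1}],\dots,[D_{x_r}])$ in the target, so finite generation of the image follows as soon as one knows that each component lies in a finitely generated subgroup, which can be extracted from the klt hypothesis locally around each $x_i$. Either way, the result is immediate from Theorem~\ref{thm:wdiv-cdiv} once local finite generation is in hand.
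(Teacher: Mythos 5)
Your proposal is correct and takes essentially the same route as the paper: apply Theorem~\ref{thm:wdiv-cdiv} to embed ${\rm WDiv}(X)/{\rm CaDiv}(X)$ into $\bigoplus_{i=1}^r {\rm Cl}(X_{x_i})$, then invoke finite generation of the local class group at a klt type singularity. The paper cites~\cite[Theorem 3.27]{BM21} cleanly for that second input, whereas your attribution (regional fundamental group finiteness plus boundedness) is vaguer and not quite the right mechanism — the fundamental group controls only the torsion of ${\rm Cl}(X;x)$, not its free part — but the overall argument is the same.
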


\begin{proof}
Let $X$ be a variety with klt type singularities. Let $x_1,\dots,x_r\in X$ be closed points. 
By~\cite[Theorem 3.27]{BM21}, we know that 
$\bigoplus_{i=1}^r {\rm Cl}(X_{x_i})$ is a finitely generated abelian group.
By Theorem~\ref{thm:wdiv-cdiv}, we conclude that ${\rm WDiv}(X)/{\rm CaDiv}(X)$ is a finitely generated abelian group.
\end{proof}

We have the following corollary from the previous theorem.

\begin{corollary}\label{cor:wdiv-cdiv-pairs}
Let $(X,\Delta)$ be a klt type pair.
Then, the group
${\rm WDiv}(X,\Delta)/{\rm CaDiv}(X)$ is finitely generated.
\end{corollary}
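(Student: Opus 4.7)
The plan is to deduce the corollary directly from Theorem~\ref{thm:wdiv-cdiv-klt-type} by bridging between ordinary Weil divisors and the orbifold Weil divisors attached to the pair $(X,\Delta)$. Since $(X,\Delta)$ is of klt type, the underlying variety $X$ itself has klt type singularities (take $\Delta_0=0$ in the definition of klt type), and so Theorem~\ref{thm:wdiv-cdiv-klt-type} immediately yields that ${\rm WDiv}(X)/{\rm CaDiv}(X)$ is a finitely generated abelian group.

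The next step is to compare ${\rm WDiv}(X,\Delta)$ with ${\rm WDiv}(X)$. The orbifold Weil divisors on the pair $(X,\Delta)$, as used in Definition~\ref{def:rel-Cox-ring}, are obtained from ordinary integer Weil divisors by permitting fractional multiplicities along the finitely many prime components $D_1,\dots,D_s$ of $\Delta$, with denominators dividing integers $m_1,\dots,m_s$ determined by the coefficients of $\Delta$. Consequently, the quotient
\[
{\rm WDiv}(X,\Delta)/{\rm WDiv}(X) \simeq \bigoplus_{i=1}^{s} \zz/m_i\zz
\]
is a finite abelian group.

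I would then combine these two facts via the short exact sequence
\[
0 \to {\rm WDiv}(X)/{\rm CaDiv}(X) \to {\rm WDiv}(X,\Delta)/{\rm CaDiv}(X) \to {\rm WDiv}(X,\Delta)/{\rm WDiv}(X) \to 0,
\]
which is well-defined because every Cartier divisor is an integer Weil divisor and so sits inside ${\rm WDiv}(X) \subset {\rm WDiv}(X,\Delta)$. As an extension of two finitely generated abelian groups, the middle term is itself finitely generated, which proves the corollary.

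There is no real obstacle here beyond bookkeeping; the only mild subtlety is making precise the finiteness of the orbifold contribution ${\rm WDiv}(X,\Delta)/{\rm WDiv}(X)$, but this is inherent to the pair having finitely many prime components with rational coefficients. The substantive content has already been absorbed into Theorem~\ref{thm:wdiv-cdiv-klt-type}, and the corollary simply packages it in the orbifold setting used elsewhere in the paper.
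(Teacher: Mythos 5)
Your argument is correct and is precisely the deduction the paper intends (the corollary is stated without proof, being regarded as immediate from Theorem~\ref{thm:wdiv-cdiv-klt-type}). The two ingredients you isolate—that ${\rm WDiv}(X)/{\rm CaDiv}(X)$ is finitely generated because $X$ itself is of klt type, and that ${\rm WDiv}(X,\Delta)/{\rm WDiv}(X)$ is finite since $\Delta$ has finitely many prime components with rational coefficients—combine via your short exact sequence to give the result, matching the paper's approach.
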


\section{Torus covers of klt type varieties}

In this section, we study torus covers.
We establish a characterization theorem
for torus quasi-torsors over varieties with klt type singularities. 
We will start with the following lemma that will be used in this section.

\begin{lemma}\label{lem:torus-inv-lin-equiv-div}
Let $X$ be a variety that admits a $\mathbb{T}$-action.
Let $W$ be a Weil divisor on $X$.
We can find a $\mathbb{T}$-invariant 
Weil divisor $W'$ on $X$ for which 
$W\sim W'$.
\end{lemma}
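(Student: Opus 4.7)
The plan is to first show that $[W]$ is $\mathbb{T}$-invariant in $\mathrm{Cl}(X)$ via the vanishing of $\mathrm{Cl}(\mathbb{T})$, and then to extract an invariant representative by resolving the resulting cocycle. Let $\sigma\colon \mathbb{T}\times X\to X$ denote the action and $\pi\colon \mathbb{T}\times X\to X$ the projection onto $X$. Since $\mathbb{T}\simeq \mathbb{G}_m^k$ is an open subvariety of an affine space with complement a union of principal divisors, a standard descent computation gives that $\pi^*\colon \mathrm{Cl}(X)\to \mathrm{Cl}(\mathbb{T}\times X)$ is an isomorphism. Hence $\sigma^*W\sim\pi^*W$, so I can write $\sigma^*W-\pi^*W=\mathrm{div}(F)$ for some $F\in \kk(\mathbb{T}\times X)^*$. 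Because $\sigma$ and $\pi$ agree along the identity section $\{e\}\times X$, the restriction $F|_{\{e\}\times X}$ has trivial divisor and is therefore a global unit; after multiplying $F$ by $\pi^*\bigl(F|_{\{e\}\times X}\bigr)^{-1}$, I may assume $F|_{\{e\}\times X}=1$.

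Next, I would exploit the associativity of the $\mathbb{T}$-action. Pulling the equation $\sigma^*W-\pi^*W=\mathrm{div}(F)$ back to $\mathbb{T}\times \mathbb{T}\times X$ via the two factorizations implicit in $\sigma\circ(\mathrm{id}_{\mathbb{T}}\times \sigma)=\sigma\circ(m\times \mathrm{id}_X)$, where $m$ is the multiplication on $\mathbb{T}$, yields the cocycle identity
\[
F(st,x)=F(s,t\cdot x)\cdot F(t,x).
\]
Decomposing $F$ into isotypical components under the translation action of $\mathbb{T}$ on the first factor and combining the cocycle identity with the normalization $F|_{\{e\}\times X}=1$, one shows that $F$ must be supported on a single weight: there exist a character $\chi\in \mathrm{Hom}(\mathbb{T},\mathbb{G}_m)$ and a function $g\in \kk(X)^*$ with
\[
F(t,x)=\chi(t)\cdot\frac{g(x)}{g(t\cdot x)}.
\]

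Finally, setting $W':=W+\mathrm{div}(g)$, I compute
\[
\sigma^*W'-\pi^*W'=\mathrm{div}(F)+\sigma^*\mathrm{div}(g)-\pi^*\mathrm{div}(g)=\mathrm{div}(\chi(t)),
\]
which vanishes since $\chi(t)$ is a global unit on $\mathbb{T}\times X$. Hence $\sigma^*W'=\pi^*W'$, and restricting to each fiber $\{t\}\times X$ yields $t^*W'=W'$, so $W'$ is the desired $\mathbb{T}$-invariant Weil divisor linearly equivalent to $W$. The main obstacle is the single-weight reduction of the cocycle $F$: since $F$ is only a rational function in $t$, producing the character $\chi$ and the function $g$ requires a Hilbert--90 style argument for the $\mathbb{T}$-action on $\kk(X)^*$; once this is in place, the remaining steps are essentially formal.
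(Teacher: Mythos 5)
Your strategy is genuinely different from the paper's. The paper reduces to a $\mathbb{T}$-invariant affine open via Sumihiro's theorem, uses the structure theory of affine $\mathbb{T}$-varieties (Altmann--Hausen) to write the complement of finitely many invariant divisors as $\mathbb{T}\times U$, and then computes $\operatorname{Cl}(\mathbb{T}\times U)$ using the product-with-$\mathbb{P}^n$ formula from Hartshorne. Your approach is the global cocycle/linearization method in the style of Mumford and Knop--Kraft--Luna--Vust: show $\sigma^*[W]=\pi^*[W]$, normalize the resulting rational function $F$ on $\mathbb{T}\times X$, derive the cocycle identity, and trivialize it.

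Steps 1--4 of your plan (that $\pi^*\colon\operatorname{Cl}(X)\to\operatorname{Cl}(\mathbb{T}\times X)$ is an isomorphism, that $\sigma^*W-\pi^*W=\operatorname{div}(F)$, the normalization $F|_{\{e\}\times X}=1$, and the cocycle identity on $\mathbb{T}\times\mathbb{T}\times X$) are all correct. But Step 5 has a genuine gap, and it is the crux of the whole proof. You propose to "decompose $F$ into isotypical components under the translation action of $\mathbb{T}$ on the first factor." That decomposition is not available: $F$ is only a rational function on $\mathbb{T}\times X$, its divisor $\sigma^{-1}(|W|)\cup\pi^{-1}(|W|)$ generically dominates $X$, and the $\mathbb{T}$-action on $\kk(\mathbb{T}\times X)^*$ (or on $\kk(X)^*$) is not locally finite, so there is no weight-space decomposition of $F$ to appeal to. Moreover, the statement that $F(t,x)=\chi(t)\,g(x)/g(t\cdot x)$ is not a lemma en route to the result; it is \emph{equivalent} to the result. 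Indeed, if $W'=W+\operatorname{div}(g)$ is $\mathbb{T}$-invariant as a divisor then $\operatorname{div}(F)=\operatorname{div}\bigl(\pi^*g/\sigma^*g\bigr)$, and the unit $F\cdot\sigma^*g/\pi^*g$ on $\mathbb{T}\times X$ is a character by Rosenlicht's unit theorem and the normalization; conversely, given the claimed form of $F$, setting $W'=W+\operatorname{div}(g)$ gives the invariant representative. So the "Hilbert--90 style argument" you defer is exactly the lemma, and labeling it as the remaining obstacle does not close the proof. To make the cocycle route rigorous one typically passes to $X^{\mathrm{reg}}$ (where $\operatorname{Cl}=\operatorname{Pic}$), invokes the linearization theorem for line bundles on normal $G$-varieties with $\operatorname{Pic}(G)=0$, and then extracts a semi-invariant rational section; that argument does not go through the isotypical decomposition you suggest. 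The paper's Sumihiro-plus-product-formula argument sidesteps the issue entirely and is self-contained, which is why it is shorter.
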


\begin{proof}
By Sumihiro's equivariant completion~\cite{Sum74}, we may assume that $X$ is an affine $\mathbb{T}$-variety, where $\mathbb{T}$ is an $n$-dimensional torus.
By~\cite[Proposition 1.6]{AH06}, we can find $\mathbb{T}$-invariant divisors 
$D_1,\dots,D_k$ such that 
$X\setminus \bigcup_{i=1}^k D_i \simeq \mathbb{T}\times U$
for some variety $U$.
We may assume that $U$ is smooth. Let $U\hookrightarrow Y$ be a smooth
projectivization. By~\cite[Exercise 12.6.(b)]{Har77}, we know that
${\rm Cl}(Y\times \mathbb{P}^n)\simeq {\rm Cl}(Y)\times \zz$.
The class group of $Y\times \mathbb{P}^n$ is generated by
$Y\times H$ and 
divisors
of the form $\Gamma_1 \times \mathbb{P}^n,\dots, \Gamma_r\times\mathbb{P}^n$,
where $\Gamma_i \subset Y$ are prime divisors
and $H\subset \pp^n$ is a hyperplane.
Hence, the Class group of $\mathbb{T}\times U$ is generated by torus invariant divisors. 
This implies that the Class group of $X$ is generated by torus invariant divisors.
\end{proof}

Now, we can prove that every
torus quasi-torsor is a relative Cox ring.

\begin{proof}[Proof of Theorem~\ref{introthm:cox-space-vs-g-covers}]
Let $X$ be a normal variety.
Let $Y\rightarrow X$ be a $\mathbb{T}$-quasi-torsor. 
We will prove the statement by induction on $k$ the dimension of $\mathbb{T}$.
First, we will show that the statement holds for $k=1$.

Let $\pi\colon Y\rightarrow X$ be a
$\mathbb{G}_m$-quasi-torsor. 
Let $U\subset X$ be the largest open subset of $X$
for which there is a $\mathbb{G}_m$-equivariant 
isomorphism 
\begin{equation}\label{eq:local-Cox-U}
\pi^{-1}(U) \simeq {\rm Spec}_U\left(
\bigoplus_{m\in \zz} \mathcal{O}_U(mW)
\right) 
\end{equation} 
for a certain Weil divisor $W$ on $U$.
By Lemma~\ref{lem:G-cover-is-local-Cox}, 
we know that $U$ is not empty. 
We claim that $U=X$. 
By contradiction, assume that 
$U\subsetneq X$. 
Let $x\in X$ be a closed point
contained in the complement of $U$. 
By definition, we can find a Weil divisor $W$ on $X$
for which the isomorphism~\eqref{eq:local-Cox-U} holds.
By Lemma~\ref{lem:G-cover-is-local-Cox}, 
we may find an affine neighborhood $V$ of $X$
and a Weil divisor $W'$ on $V$ for which there
is a $\mathbb{G}_m$-equivariant isomorphism 
\begin{equation}\label{eq:isom-V}
\pi^{-1}(V) \simeq {\rm Spec}_V\left( 
\bigoplus_{m\in \zz} \mathcal{O}_V(W')
\right).
\end{equation} 
In particular, there is a $\mathbb{G}_m$-equivariant
isomorphism 
\[ 
{\rm Spec}_{U\cap V}\left( 
\bigoplus_{m\in \zz} \mathcal{O}_{U\cap V}(mW|_{U\cap V)}
\right) 
\simeq 
{\rm Spec}_{U\cap V}\left( 
\bigoplus_{m\in \zz} \mathcal{O}_{U\cap V}(mW'|_{U\cap V)}
\right) 
\]
over $U\cap V$.
By Lemma~\ref{lem:isom-implies-lin-equiv}, we conclude that
$W|_{U\cap V} \sim W'|_{U\cap V}$ holds in $U\cap V$.
Write 
\[
W|_{U\cap V}-W'_{U\cap V} = {\rm div}(f)|_{U\cap V}
\]
for some $f\in \kk(X)$.
We can replace $W'$ with $W'+{\rm div}(f)$. 
By Lemma~\ref{lem:replacement-lin-equiv}, this replacement
preserves the equivariant isomorphism~\eqref{eq:isom-V}.
Thus, we may assume that 
$W|_{U\cap V} = W'|_{U\cap V}$.
Hence, we can find a Weil divisor 
$W''$ on $U\cup V$ for which there
is a $\mathbb{G}_m$-equivariant isomorphism 
\[
\pi^{-1}(U\cup V) \simeq {\rm Spec}_X\left( 
\bigoplus_{m \in \zz} \mathcal{O}_{U\cup V}(W'')
\right).
\]
This contradicts the maximality of $U$.
We conclude that $U=X$.
Thus, the statement of the theorem holds for $k=1$. 

Now, let $Y\rightarrow X$ be a 
$\mathbb{T}$-quasi-torsor. 
Let $Y\rightarrow Y_0$ be the quotient 
by a sub-torus $\mathbb{T}_0\leqslant \mathbb{T}$ of dimension $k-1$.
By Lemma~\ref{lem:quot-qetale-torus-cover}, 
we conclude that
$\pi_1\colon Y\rightarrow Y_0$ is a 
a $\mathbb{T}_0$-quasi-torsor
and $\pi_0\colon Y_0\rightarrow X$
is a $\mathbb{G}_m$-quasi-torsor. 
By induction on the dimension, we can find 
Weil divisors
$W_1,\dots, W_{k-1}$ on $Y_0$
for which
\[
Y \simeq {\rm Spec}_{Y_0}\left( 
\bigoplus_{(m_1,\dots,m_{k-1})\in \zz^{k-1}}
\mathcal{O}_{Y_0}(m_1W_1 +\dots+m_{k-1}W_{k-1})
\right) 
\] 
and $W$ on $X$ for which 
\[
Y_0 \simeq {\rm Spec}_{X}\left( 
\bigoplus_{m\in \zz}\mathcal{O}_X(mW)
\right).
\] 
Both isomorphisms are torus equivariant.
By Lemma~\ref{lem:torus-inv-lin-equiv-div}
and Lemma~\ref{lem:replacement-lin-equiv},
we may assume that each 
$W_i$, with $i\in\{1,\dots,k-1\}$
is torus invariant.
Since $Y_0\rightarrow X$ contains
no horizontal $\mathbb{G}_m$-invariant divisors, 
we conclude that 
$W_i=\pi_0^* W_{i,X}$
for some Weil divisors $W_{i,X}$.
Here, the pull-back is defined by restricting to the smooth locus.
We set
\[
Y' := {\rm Spec}_X\left( 
\bigoplus_{(m_1,\dots,m_k)\in\zz^k}
(m_1W_{X,1}+\dots+m_{k-1}W_{X,k-1}+m_kW_k)
\right). 
\]
Note that $Y'$ has a $\mathbb{T}_0$-quotient
$Y'_0$ obtained by considering the graded subring
given by $m_i=0$ for every $i\in \{1,\dots,k-1\}$.
This quotient is isomorphic to $Y_0$. 
Hence, we have a commutative diagram 
\[
 \xymatrix@R=2em@C=2em{
Y\ar[d]_-{\pi_1} & Y'\ar[d]^-{\pi_1'} \\
Y_0\ar[d]_-{\pi_0}\ar[r]^-{\phi} & Y_0'\ar[dl]^-{\pi_0'} \\
X.
 }
\] 
By construction, we have that 
\[
W_i = \pi_0^* W_{X,i} = \phi^* {\pi_0'}^* W_{X,i},
\]
holds for every $i\in \{1,\dots,k-1\}$.
We conclude that $Y'$ is $\mathbb{T}$-equivariantly isomorphic to $Y$.
This finishes the proof.
\end{proof}

\begin{proof}[Proof of Theorem~\ref{introthm-finite-torus-cover-klt-var}]
This statement is local.
Hence, it follows from
Theorem~\ref{introthm:torus-finite-cover}
and 
Theorem~\ref{introthm:cox-space-vs-g-covers}.
\end{proof}

In order to prove Theorem~\ref{introthm:iteration-torus-quasi-torsors},
we will need the following lemma. 

\begin{lemma}\label{lem:two-qt}
Let $\phi \colon Y \rightarrow X$ be a
$\mathbb{T}_\phi$-quasi-torsor 
and 
$\psi\colon Z\rightarrow Y$ 
be a $\mathbb{T}_\psi$-quasi-torsor.
The following statements hold:
\begin{enumerate}
    \item the composition
    $\phi\circ \psi\colon  Z\rightarrow X$ is a torus quasi-torsor,
    \item if $\phi$ corresponds to the subgroup
    $N_Y\leqslant {\rm WDiv}(X)$ and $Z\rightarrow X$ corresponds to the subgroup $N_Z\leqslant {\rm WDiv}(X)$, then $N_Y\leqslant N_Z$, and 
    \item the torus quasi-torsor $\psi$ is a torsor
    if and only if for every closed point $x\in X$ the images 
    $N_Y\rightarrow {\rm Cl}(X_x)$ and $N_Z\rightarrow {\rm Cl}(X_x)$ agree. 
    In particular, if the images of $N_Y$ and $N_Z$ agree on
    \[{\rm WDiv}(X)/{\rm CaDiv}(X)\] then $\psi$ is a torsor.
\end{enumerate}
\end{lemma}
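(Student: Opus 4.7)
The plan is to use Theorem~\ref{introthm:cox-space-vs-g-covers} to convert both quasi-torsors into relative Cox spaces and then chase the grading. For parts (1) and (2), first apply the theorem to $\phi$ to obtain a subgroup $N_Y\leqslant {\rm WDiv}(X)$ with $Y\simeq {\rm Spec}_X(\bigoplus_{D\in N_Y}\mathcal{O}_X(D))$, and apply it to $\psi$ to obtain a subgroup $N'\leqslant {\rm WDiv}(Y)$ with $Z\simeq {\rm Spec}_Y(\bigoplus_{D'\in N'}\mathcal{O}_Y(D'))$. Using Lemma~\ref{lem:torus-inv-lin-equiv-div} one would replace each generator of $N'$ by a linearly equivalent $\mathbb{T}_\phi$-invariant divisor; by Lemma~\ref{lem:replacement-lin-equiv} this replacement does not alter the $\mathbb{T}_\psi$-equivariant isomorphism class of $Z$. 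Since $\phi$ is a torsor on a codimension-two open subset and thus carries no horizontal $\mathbb{T}_\phi$-invariant prime divisor, every $\mathbb{T}_\phi$-invariant Weil divisor appearing among these generators is of the form $\phi^*D_i$ for a unique Weil divisor $D_i$ on $X$. The $D_i$ together with a basis of $N_Y$ span a subgroup $N_Z\leqslant {\rm WDiv}(X)$, and a direct inspection of graded rings (mirroring the final step of the proof of Theorem~\ref{introthm:cox-space-vs-g-covers}) identifies $Z$ equivariantly with ${\rm Spec}_X(\bigoplus_{D\in N_Z}\mathcal{O}_X(D))$ under the natural action of $\mathbb{T}_\phi\times \mathbb{T}_\psi$, proving (1). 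The inclusion $N_Y\leqslant N_Z$ is immediate from this construction, establishing (2).

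For (3), the key input is the standard exact sequence $N_Y\to {\rm Cl}(X_x)\to {\rm Cl}(Y_y)\to 0$ at every closed point $y\in Y$ above $x\in X$, which holds because the pullbacks to $Y$ of the divisors in $N_Y$ become principal in the relative Cox ring. The map $\psi\colon Z\to Y$ is a torsor if and only if the defining group $N'\leqslant {\rm WDiv}(Y)$ consists of divisors that are Cartier at every closed point of $Y$, equivalently whose images in each ${\rm Cl}(Y_y)$ vanish. Under the identification from (1), a generator of $N'$ is of the form $\phi^*D$ with $D\in N_Z$, and vanishing in ${\rm Cl}(Y_y)$ translates via the exact sequence to $D$ lying in the image of $N_Y$ inside ${\rm Cl}(X_x)$. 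Running over all $x$ and using the containment established in (2), this is exactly the condition that the images of $N_Y$ and $N_Z$ in ${\rm Cl}(X_x)$ coincide for every closed point $x\in X$. The final assertion follows because a class which is trivial in the global quotient ${\rm WDiv}(X)/{\rm CaDiv}(X)$ is trivial in every local class group ${\rm Cl}(X_x)$.

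The main obstacle is the bookkeeping in part (1): after the $\mathbb{T}_\phi$-invariant replacement one must verify that the resulting $\mathbb{T}_\phi\times \mathbb{T}_\psi$-action on $Z$ really has $X$ as its categorical quotient in the sense of Definition~\ref{def:quasi-torsor}, including clause (2) of that definition (global invertible homogeneous functions descend). For the latter, the two-step factorization $Z\to Y\to X$ reduces the descent statement for the composed torus to the separate conditions already satisfied by $\phi$ and $\psi$. The local class group exact sequence used in part (3) is essentially built into the relative Cox ring construction of Theorem~\ref{introthm:cox-space-vs-g-covers}, and should be recorded explicitly en route.
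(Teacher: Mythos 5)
Your proposal is correct. For parts (1) and (2) you take a genuinely more concrete route than the paper: you re-run the downward descent that appears in the induction step of the proof of Theorem~\ref{introthm:cox-space-vs-g-covers} — replacing the defining divisors on $Y$ by $\mathbb{T}_\phi$-invariant ones via Lemmas~\ref{lem:torus-inv-lin-equiv-div} and~\ref{lem:replacement-lin-equiv}, observing that invariant prime divisors on $Y$ are vertical and hence pullbacks from $X$, and assembling $N_Z$ by adjoining these to $N_Y$. The paper instead invokes \cite[Proposition~4.2.3.6]{ADHL15} to \emph{lift} the $\mathbb{T}_\phi$-action from $Y$ to $Z$ abstractly, takes the $(\mathbb{T}_\phi\times\mathbb{T}_\psi)$-quotient, checks the quasi-torsor axioms from those of $\phi$ and $\psi$, and then applies Theorem~\ref{introthm:cox-space-vs-g-covers} a second time to get $N_Z$. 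Your version is more transparent and keeps explicit track of the grading subgroup, at the cost of the bookkeeping you flag (including verifying that the invertible-homogeneous-function clause of Definition~\ref{def:quasi-torsor} passes through the two-stage factorization, which it does); the paper's is shorter but leans on the \cite{ADHL15} machinery. For part (3), the two arguments are essentially the same: both reduce to asking when $\phi^*N_Z$ is Cartier on $Y$, and both translate this into the coincidence of images of $N_Y$ and $N_Z$ in every local class group. You make the local class group exact sequence $N_Y\to{\rm Cl}(X_x)\to{\rm Cl}(Y_y)\to 0$ explicit (it is implicit in the paper's appeal to \cite[Theorems~1.3.3.1 and~1.3.3.3]{ADHL15} used elsewhere in the same section), which is a reasonable added detail rather than a gap.
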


\begin{proof}
We start by showing that if $\phi \colon Y \to X$ and $\psi \colon Z \to Y$ are two torus quasi-torsors with acting tori $\mathbb{T}_\phi$ and $\mathbb{T}_\psi$ respectively, 
then $\phi \circ \psi \colon Z \to X$ is a torus quasi-torsor as well. 
By Theorem~\ref{introthm:cox-space-vs-g-covers}, $\psi$ corresponds to a relative Cox ring, 
i.e. to a sheaf of graded algebras as in~\cite[Section~4.2.3]{ADHL15}. 
Thus, we can lift the action of $\mathbb{T}_\phi$ on $Y$ to $Z$ by~\cite[Proposition~4.2.3.6]{ADHL15}.
The quotient by the action of $\mathbb{T}_\phi \times \mathbb{T}_\psi$ is $\phi \circ \psi$. This is a quasi-torsor, since $\phi$ and $\psi$ are so. 
Again by Theorem~\ref{introthm:cox-space-vs-g-covers}, $\phi \circ \psi$ corresponds to a relative Cox ring with respect to a subgroup $N \leqslant \WDiv(X)$. This shows $(1)$.
The previous construction also shows $(2)$.

For $(3)$, note that $\psi$ is a torsor if and only if 
every element of $\phi^*N_Z$ is Cartier in $Y$.
Since $N_Y\leqslant N_Z$, this happens if and only if
the image of $N_Z$ equals the image of $N_Y$ in every local Class group of $X$.
\end{proof}

\begin{proof}[Proof of Theorem~\ref{introthm:iteration-torus-quasi-torsors}]
Let $X$ be a variety with klt type singularities.
Consider a sequence of morphisms:
\[
 \xymatrix@R=2em@C=2em{
X=X_0 &
X_1\ar[l]_-{\phi_1} &
X_2\ar[l]_-{\phi_2} &
X_3\ar[l]_-{\phi_3} & 
\dots \ar[l]_-{\phi_4} &
X_i\ar[l]_-{\phi_i} &
X_{i+1}\ar[l]_-{\phi_{i+1}}& 
\dots \ar[l]_-{\phi_{i+2}}&
}
\] 
such that each $\phi_i\colon X_i\rightarrow X_{i-1}$
is a $\mathbb{T}$-quasi-torsor.
We write $\psi_i=\phi_i\circ\dots\circ \phi_1$.
By Lemma~\ref{lem:two-qt}.(1), we know that each 
$\psi_i$ is a $\mathbb{T}$-quasi-torsor
corresponding to a subgroup $N_i\leqslant {\rm WDiv}(X)$.
By Lemma~\ref{lem:two-qt}.(2), we know that there is a sequence of subgroups
\[
N_1 \leqslant N_2 \leqslant \dots \leqslant N_i \leqslant \dots 
\] 
By Theorem~\ref{thm:wdiv-cdiv-klt-type}, we know that ${\rm WDiv}(X)/{\rm CaDiv}(X)$ is a finitely generated abelian group. 
In particular, for some $i_0$, we have that the image
of every $N_i$, with $i\geq i_0$, stabilizes in 
${\rm WDiv}(X)/{\rm CaDiv}(X)$. 
By Lemma~\ref{lem:two-qt}.(3), we conclude that each 
$\phi_i$, with $i\geq i_0$ is a torus torsor.
This finishes the proof of the theorem.
\end{proof} 

\section{Iteration of torus and finite covers} 

In this section, we study the iteration
of quasi-\'etale $G$-covers, where $G$ is either finite
or a torus.

\begin{proof}[Proof of Theorem~\ref{thm:iteration1}]
We consider a sequence
\[
 \xymatrix@R=2em@C=2em{
X=X_0 &
X_1\ar[l]_-{\phi_1} &
X_2\ar[l]_-{\phi_2} &
X_3\ar[l]_-{\phi_3} & 
\dots \ar[l]_-{\phi_4} &
X_i\ar[l]_-{\phi_i} &
X_{i+1}\ar[l]_-{\phi_{i+1}}& 
\dots \ar[l]_-{\phi_{i+2}}&
}
\]
as in the statement of  Theorem~\ref{thm:iteration1}. We claim that if $\phi_i$ is a $\mathbb{T}$-cover and $\phi_{i+1}$ a finite $G_{i+1}$-cover (for any $i\geq 1$), then we have a variety $X_{i}'$, a commutative diagram  
\[
 \xymatrix@R=2em@C=2em{
 X_{i+1} \ar[r]^-{\phi_{i+1}} \ar[d]_-{\phi_{i+1}'} & X_i \ar[d]^-{\phi_i} \\
   X_{i}' \ar[r]^-{\phi_{i}'} & X_{i-1} ,
 }
\]
where $\phi_{i}'$ (resp. $\phi_{i+1}'$) is a $G_{i+1}$-cover (resp. $\mathbb{T}$-cover), which is \'etale if and only if $\phi_{i+1}$ (resp. $\phi_{i}$) is \'etale. This claim holds since by~\cite[Proof of Proposition 5.1]{BM21}, we have an exact sequence
\[
\xymatrix@C=15pt{
\zz^{\dim(\mathbb{T})} \ar[r] & \pi_1(X_i^{\rm reg}) \ar[r]^{\varphi} & \pi_1(X_{i-1}^{\rm reg}) \ar[r] &
1,
}
\]
where $\mathbb{T}$ is the general fiber of $\phi_i$. Thus if the finite cover $\phi_{i+1}$ corresponds to the normal subgroup $N \leqslant \pi_1(X_{i}^{\mathrm{reg}})$, then we get $\phi_{i}'$ as the finite cover of $X_{i-1}$ corresponding to the image of $N$ in $\pi_1(X_i^{\mathrm{reg}})$ under the above homomorphism $\varphi$. By the fiber product $X_{i+1}=X_i \times_{X_{i-1}} X_i'$ (which preserves \'etaleness, finiteness and GIT-quotients), we get the commutative diagram. 

Thus, if for all $j \in \mathbb{N}$, there exists a $k\geq j$ such that $\phi_k$ is a finite quasi-\'etale but not \'etale cover, by reordering of the $\phi_i$ according to the claim just proven, we can construct an infinite sequence
\[
 \xymatrix@R=2em@C=2em{
X'=X_0 &
X'_1\ar[l]_-{\phi'_1} &
X'_2\ar[l]_-{\phi'_2} &
X'_3\ar[l]_-{\phi'_3} & 
\dots \ar[l]_-{\phi'_4} &
X'_i\ar[l]_-{\phi'_i} &
X'_{i+1}\ar[l]_-{\phi'_{i+1}}& 
\dots \ar[l]_-{\phi'_{i+2}}&
}
\]
where the $\phi'_i$ are finite Galois quasi-\'etale but not \'etale covers. But this is a contradiction to~\cite[Theorem 1.1]{GKP16}. 
Thus, there are only finitely many finite Galois quasi-\'etale and not \'etale covers in the sequence, i.e., there are only finitely many finite quasi-torsors in this sequence that are not finite torsors.
\end{proof}

In what follows, we turn to prove Theorem~\ref{thm:iteration2}.
To do so, we will use the following lemmata.

\begin{lemma}\label{lem:square-diagram}
Let $\phi\colon Y\rightarrow X$ be a $\mathbb{T}$-quasi-torsor of a klt type variety corresponding to the subgroup
$N$ of ${\rm WDiv}(X)$.
Let $\pi\colon X'\rightarrow X$ be a finite torsor.
Let $Y'=Y\times_X X'$ and $Y'\rightarrow X'$ be the associated $\mathbb{T}$-quasi-torsors so that we have a commutative diagram:
\[
 \xymatrix@R=2em@C=2em{
 Y \ar[d]_-{\phi} & Y' \ar[l]\ar[d]^-{\phi'} \\
 X & X'\ar[l]_-{\pi}. 
 }
\]
Then, $\phi'$ is the $\mathbb{T}$-quasi-torsor associated to the subgroup $\pi^*N$ of ${\rm WDiv}(X')$.
\end{lemma}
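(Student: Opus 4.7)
The plan is to apply the structure theorem Theorem~\ref{introthm:cox-space-vs-g-covers} twice: once to expand $Y$ as a relative Cox space over $X$, and once at the end to recognize $Y'$ as the relative Cox space over $X'$ corresponding to $\pi^*N$. Concretely, I fix Weil divisors $W_1,\dots,W_k$ on $X$ that span $N$, so that by Theorem~\ref{introthm:cox-space-vs-g-covers} there is a $\mathbb{T}$-equivariant isomorphism
\[
Y \simeq \Spec_X\!\left(\bigoplus_{(m_1,\dots,m_k)\in \zz^k} \oo_X(m_1W_1+\dots+m_kW_k)\right).
\]

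Next, I would form the base change. Since relative $\Spec$ commutes with base change, one obtains an isomorphism of $\zz^k$-graded sheaves of $\oo_{X'}$-algebras
\[
Y' = Y\times_X X' \simeq \Spec_{X'}\!\left(\pi^*\bigoplus_{(m_1,\dots,m_k)\in \zz^k} \oo_X(m_1W_1+\dots+m_kW_k)\right),
\]
and the $\mathbb{T}$-action on $Y'$ inherited from $Y$ is exactly the one induced by this $\zz^k$-grading. The crucial step is then the identification $\pi^*\oo_X(D)\simeq \oo_{X'}(\pi^*D)$ for every Weil divisor $D$ on $X$. Because $\pi$ is a finite torsor it is étale, hence it carries the smooth locus of $X$ into the smooth locus of $X'$ and commutes with the reflexive hull in rank one. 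Applying this summand by summand yields
\[
Y' \simeq \Spec_{X'}\!\left(\bigoplus_{(m_1,\dots,m_k)\in \zz^k} \oo_{X'}(m_1\pi^*W_1+\dots+m_k\pi^*W_k)\right),
\]
and the divisors $\pi^*W_1,\dots,\pi^*W_k$ span $\pi^*N$ in $\WDiv(X')$.

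All identifications above are $\mathbb{T}$-equivariant, so the right-hand side is, by the converse direction of Theorem~\ref{introthm:cox-space-vs-g-covers} (combined with Lemma~\ref{lem:replacement-lin-equiv} to absorb the choice of representatives of the $\pi^*W_i$ in their linear equivalence classes), $\mathbb{T}$-equivariantly isomorphic to the $\mathbb{T}$-quasi-torsor associated to $\pi^*N$. I expect the main obstacle to be the reflexive-sheaf identification $\pi^*\oo_X(D)\simeq \oo_{X'}(\pi^*D)$; it uses étaleness of $\pi$ in an essential way and would fail if $\pi$ had codimension-one ramification. A secondary technical point is verifying that $\phi'$ is genuinely a $\mathbb{T}$-quasi-torsor rather than merely a $\mathbb{T}$-cover, but this is automatic once $Y'$ is recognized as a relative Cox space, and in any case both conditions in Definition~\ref{def:quasi-torsor} are stable under étale base change.
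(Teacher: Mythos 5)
Your argument is the same as the paper's: both base-change the relative Cox sheaf $\bigoplus_{D\in N}\mathcal{O}_X(D)$ along $\pi$ and reduce to the identification $\pi^*\mathcal{O}_X(D)\simeq\mathcal{O}_{X'}(\pi^*D)$, which the paper invokes tersely ("by definition of the pullback") while you correctly flag that this uses the \'etaleness of $\pi$ to commute pullback with reflexive hull. The approach and key step coincide; your version is just a bit more explicit about why the rank-one identification holds.
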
 

\begin{proof}
We consider the dual diagram
\[
 \xymatrix@R=2em@L=2em{
 \bigoplus_{D \in N} \mathcal{O}_X(D) \ar[r]& \left(\bigoplus_{D \in N} \mathcal{O}_X(D)\right) \otimes_{\mathcal{O}_X} \mathcal{O}_{X'} \\
 \mathcal{O}_X\ar[r] \ar[u] & \mathcal{O}_{X'}. \ar[u]
 }
\]
The top right entry equals $\bigoplus_{D \in N} \left(\mathcal{O}_X(D) \otimes_{\mathcal{O}_X} \mathcal{O}_{X'}\right)$. We have $\mathcal{O}_{X'}(\pi^*D)=\mathcal{O}_X(D) \otimes_{\mathcal{O}_X} \mathcal{O}_{X'}$ by definition of the pullback. So the claim follows.
\end{proof}

\begin{lemma}\label{lem:dominating-qt}
Let $Y\rightarrow X$ be a torus quasi-torsor corresponding to
the subgroup $N_Y$ of ${\rm WDiv}(X)$.
Let $Z\rightarrow X$ be a torus quasi-torsor corresponding to the subgroup $N_Z$ of ${\rm WDiv}(X)$.
If $N_Z\geqslant N_Y$ and $N_Z/N_Y$ is torsion free, then 
there is an induced 
torus quasi-torsor $Z\rightarrow Y$.
\end{lemma}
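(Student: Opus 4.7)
The plan is to produce the morphism $Z \to Y$ from the inclusion of the corresponding sheaves of graded algebras, and then identify it with a quotient of $Z$ by a sub-torus so that Lemma~\ref{lem:quot-qetale-torus-cover} applies.

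First, I would invoke Theorem~\ref{introthm:cox-space-vs-g-covers} to write
\[
Y \simeq \mathrm{Spec}_X\!\left(\bigoplus_{D\in N_Y}\mathcal{O}_X(D)\right)
\quad\text{and}\quad
Z \simeq \mathrm{Spec}_X\!\left(\bigoplus_{D\in N_Z}\mathcal{O}_X(D)\right),
\]
with the torus actions induced by the gradings by $N_Y$ and $N_Z$ respectively. The inclusion $N_Y\hookrightarrow N_Z$ gives a natural inclusion of $\mathcal{O}_X$-graded algebras, and passing to relative spectra I obtain an affine morphism $Z\to Y$ over $X$.

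Next, since $N_Z/N_Y$ is torsion free, the short exact sequence
\[
0 \to N_Y \to N_Z \to N_Z/N_Y \to 0
\]
consists of free abelian groups, so applying $\mathrm{Spec}\,\mathbb{K}[\,\cdot\,]$ yields a short exact sequence of tori
\[
1 \to \mathbb{T}_0 \to \mathbb{T}_Z \to \mathbb{T}_Y \to 1,
\]
where $\mathbb{T}_0=\mathrm{Spec}\,\mathbb{K}[N_Z/N_Y]$ is a sub-torus of the torus $\mathbb{T}_Z$ acting on $Z$. The key identification to make is that the $\mathbb{T}_0$-invariant part of $\bigoplus_{D\in N_Z}\mathcal{O}_X(D)$ is precisely the subalgebra $\bigoplus_{D\in N_Y}\mathcal{O}_X(D)$, because $\mathbb{T}_0$ acts on the $D$-graded piece through the character $D\mapsto \overline{D}\in N_Z/N_Y$, which is trivial exactly when $D\in N_Y$. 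Consequently, the morphism $Z\to Y$ constructed above coincides with the categorical $\mathbb{T}_0$-quotient of $Z$.

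Finally, since $Z\to X$ is a $\mathbb{T}_Z$-quasi-torsor and $\mathbb{T}_0\leqslant \mathbb{T}_Z$ is a sub-torus, Lemma~\ref{lem:quot-qetale-torus-cover} immediately gives that both $Z\to Z/\mathbb{T}_0$ and $Z/\mathbb{T}_0\to X$ are torus quasi-torsors. Combined with the identification $Z/\mathbb{T}_0\simeq Y$ from the previous step, this shows that $Z\to Y$ is a torus quasi-torsor, as desired. The only nontrivial point in the plan is the computation of the $\mathbb{T}_0$-invariants; everything else is formal once one has Theorem~\ref{introthm:cox-space-vs-g-covers} and Lemma~\ref{lem:quot-qetale-torus-cover} in hand, and the torsion-free hypothesis on $N_Z/N_Y$ is used precisely to ensure $\mathbb{T}_0$ is indeed a torus rather than a more general diagonalizable group.
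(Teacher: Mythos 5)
Your proof is correct and follows essentially the same route as the paper: identify the sub-torus of $\mathbb{T}_Z$ that acts trivially on the $N_Y$-graded part, compute that its invariants are exactly $\bigoplus_{D\in N_Y}\mathcal{O}_X(D)$, and then apply Lemma~\ref{lem:quot-qetale-torus-cover}. The only cosmetic difference is that the paper first uses the torsion-freeness of $N_Z/N_Y$ to split $N_Z = N_Y \oplus N'$ and works with the complementary sub-torus $\mathbb{T}_{N'}$, whereas you work directly with the short exact sequence of tori; the two sub-tori coincide, and both formulations use torsion-freeness in the same essential place (to get a torus rather than a general diagonalizable group or, equivalently, to split the exact sequence).
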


\begin{proof}
The condition that $N_Z/N_Y$ is torsion free means that we have a direct product representation $N_Z=N_Y \oplus N'$ with a subgroup $N'$ of $N_Z$ isomorphic to $N_Z/N_Y$. The downgrading of $\mathcal{O}_Z$ from $N_Z$ to $N'$ gives an action of a subtorus $\mathbb{T}_{N'} \leqslant \mathbb{T}_{N_Z}$ on $Z$. By construction, $Z/\!\!/\mathbb{T}_{N'}=Y$. Now the statement follows from Lemma~\ref{lem:quot-qetale-torus-cover}.
\end{proof}

\begin{proof}[Proof of Theorem~\ref{thm:iteration2}]
The proof of the theorem will consist of three steps. We briefly explain the steps here. In the first step, we will produce a singular variety with a special toric divisor.
In the second step, we will produce an infinite sequence of finite torsors for such a singular variety. We show that the rank of the group of Weil divisors modulo Cartier divisors diverges in this sequence.
Finally, we will use this divergence property to produce the infinite sequence of $\mathbb{T}$-quasi-torsors that are not torsors.\\

\noindent\textit{Step 1:} For each $n\geq 2$, we construct a $n$-dimensional projective variety with a single isolated toric singularity and infinite \'etale fundamental group.\\

Let $Z^n$ be a smooth projective variety with infinite \'etale fundamental group.
Let $z\in Z^n$ be a smooth point.
In local coordinates around $z\in Z$,
the formal completion $\hat{\mathcal{O}}_{Z,z}$ corresponds to the standard fan 
$\langle e_1,\dots, e_n \rangle \subset \rr^n$.
For each $n\geq 2$, we consider the blow-up given by the fan decomposition
\[
\Sigma_n:=\{ 
\langle \bar{e_1}, e_2, e_3,\dots, e_n, v\rangle,
\langle e_1, \bar{e_2},e_2,e_3, \dots, v\rangle, \dots ,\langle e_1,\dots, e_{n-1},\bar{e_n},v\rangle 
\},
\]
where $v=2e_1+e_2+e_3+\dots+e_n$.
We let $Y^n \rightarrow Z^n$ to be the corresponding blow-up. 
Observe that $Y^n$ has a unique isolated toric singularity.
We let $y^n\in Y^n$ be such isolated toric singularity.
Note that the local Class group
of $Y^n$ at $y^n$ is 
$\zz_2$.
For $n=2$, this point is a rational double point.
By construction, there is a divisor $T^n\subset Y^n$ which is a normal projective toric variety 
and $y^n$ is contained in $T^n$.
Indeed, this toric variety corresponds to the primitive lattice generator $v\in \Sigma_n(1)$.
Since $Y^n$ has klt singularities and $Z^n$ is smooth, we conclude that
$\pi_1(Y^n)\simeq \pi_1(Z^n)$.
In particular, the \'etale fundamental group of $Y^n$ is infinite.\\

\noindent\textit{Step 2:} We construct a sequence of finite \'etale Galois covers of $Y^n$ 
and study their groups of Weil divisors modulo Cartier divisors.\\

Let 
\[
 \xymatrix@R=2em@C=2em{
Y^n=Y^n_0 &
Y^n_1\ar[l]_-{f_1} &
Y^n_2\ar[l]_-{f_2} &
Y^n_3\ar[l]_-{f_3} & 
\dots \ar[l]_-{f_4} &
Y^n_i\ar[l]_-{f_i} &
Y^n_{i+1}\ar[l]_-{f_{i+1}}& 
\dots \ar[l]_-{f_{i+2}}&
}
\]
be an infinite sequence of finite \'etale Galois covers. 
Let $k_i:={\rm deg}(Y^n_{i+1}\rightarrow Y^n_i)$
be the degree of the cover.
Then, the variety $Y^n_i$ is $n$-dimensional and it has $k_0\cdots k_{i-1}$ isolated singularities.
We denote these singularities as 
\[
y^n_{i,(m_0,\dots,m_{i-1})} \in Y^n_i,
\]
where $1\leq m_j \leq k_j$ for each $j$.
We can order the singularities in such a way that
\[
f_i^{-1}(y_{i-1,(m_0,\dots,m_{i-1})})
= 
\bigcup_{m=1}^{k_i} y^n_{i,(m_0,\dots,m_{i-1},m)}.
\]
Since $T^n$ has trivial fundamental group, we conclude that 
$f_i^*\dots f_1^*T^n$ is the disjoint union 
of $k_0\cdots k_{i-1}$ toric varieties
isomorphic to $T^n$.
We write
$T^n_{i,(m_0,\dots,m_{i-1})}$ 
with $1\leq m_j\leq k_j$ for such toric divisors.
By construction,
the toric divisor
$T^n_{i,(m_0,\dots,m_{i-1})}$ contains the singular point 
$y^n_{i,(m_0,\dots,m_{i-1})}$.
We claim that 
\begin{equation}\label{eq:isom-weil-mod-cart} 
{\rm WDiv}(Y_i^n)/{\rm CaDiv}(Y_i^n) \simeq \bigoplus_{i=1}^{k_0\cdots k_{i-1}} \zz_2.
\end{equation} 
First, observe that $T^n_{i,(m_0,\dots,m_{i-1})}$ is not a Cartier divisor.
Indeed, if this was the case then
$T^n_{i,(m_0,\dots,m_{i-1})}$ would be analytically Cartier around
$y^n_{i,(m_0,\dots,m_{i-1})}$.
This implies that $T^n$ is analytically Cartier around $y^n$, leading to a contradiction.
On the other hand
$2T^n_{i,(m_0,\dots,m_{i-1})}$ is Cartier in $Y^n_i$ as
it is the pull-back of $2T^n$ on a neighborhood of the only singular point that it contains.
We conclude that each 
$T^n_{i,(m_0,\dots,m_{i-1})}$ is $2$-torsion in the abelian group
${\rm WDiv}(Y^n)/{\rm CaDiv}(Y^n)$.
Let $J\subset ([1,k_0] \cap \zz) \times\dots\times ([1,k_{i-1}]\cap \zz)$ be a subset. 
Assume that we have a relation of the form 
\[
\sum_{j\in J} T^n_{i,j} =0 \in {\rm WDiv(Y^n)}/{\rm CaDiv}(Y^n).
\]
This means that the divisor
$\sum_{j\in J} T^n_{i,j}$ is Cartier in $Y^n_i$.
Let $j_0\in J$ be a fixed element.
For each $j_k \neq j_0$ in $J$, we have that $T^n_{i,j_k}$ is Cartier at $y^n_{i,j_0}$. 
We conclude that $T^n_{i,j_0}$ is Cartier at $y^n_{i,j_0}$.
Hence, it is a Cartier divisor. This leads to a contradiction. 
Then, the isomorphism~\eqref{eq:isom-weil-mod-cart} holds.\\

\noindent\textit{Step 3:} In this step, we construct an infinite sequence of finite torsors and
torus quasi-torsors of $Y^n$.\\

For each $i\geq 1$, we denote by $N_i$
the group of ${\rm WDiv}(Y^n_i)$ generated by
\[
\{ T^n_{i,(m_0,\dots,m_{i-1})} \mid 
1\leq m_0\leq k_0, \dots, 
1\leq m_{i-2}\leq k_{i-2}, 
\text{ and }
1\leq m_{i-1} \leq k_{i-1}-1
\}.
\]
For each $i\geq 0$, 
we define $X_{2i}^n$ to be the relative Cox ring of $Y_i^n$ with respect to $N_i$.
We define $X_{2i+1}$
to be $X_{2i}\times_{Y_i^n} Y_{i+1}^n$.
We define $\phi_{2i+1}\colon X_{2i+1}^n \rightarrow X_{2i}^n$ to be the induced morphism.
Thus, we have a commutative diagram as follows:
\[
 \xymatrix@R=2em@C=2em{
 X_{2i}^n \ar[d] & X_{2i+1}^n \ar[l]_-{\phi_{2i+1}} \ar[d] \\
 Y^n_i & Y^n_{i+1}\ar[l]_-{f_{i+1}} ,
 }
\]
By Lemma~\ref{lem:square-diagram},
the torus quasi-torsor
$X_{2i+1}^n \rightarrow Y^n_{i+1}$ is induced by the subgroup
$f_{i+1}^* N_i \leqslant {\rm WDiv}(Y^n_{i+1})$.
By construction, we have that
$f_{i+1}^*N_i \leqslant N_{i+1}$.
By Lemma~\ref{lem:dominating-qt} there is a corresponding quasi-torsor $\phi_{2(i+1)} \colon X_{2(i+1)}\rightarrow X_{2i+1}^n$.

We claim that $\phi_{2(i+1)}$ is not a torus torsor.
Let $C$ be the Class group
of $Y_{i+1}$ at the point
$y_{i+1,(k_0,\dots,k_{i-1},1)}$.
By the isomorphism~\eqref{eq:isom-weil-mod-cart},
we know that $C\simeq \zz_2$.
Note that the image of $N_{i+1}$ in $C$ is isomorphic to $\zz_2$.
Indeed, the image of the divisor $T^n_{i+1,(k_0,\dots,k_{i-1},1)}$ generates $C$.
On the other hand,
the image of $f_{i+1}^*N_i$ in $C$ is trivial
since no divisor among the generators of $N_i$ 
pass through $y_{i,(k_0,\dots,k_{i-1})}$.
By Lemma~\ref{lem:two-qt}, we conclude that $\phi_{2(i+1)}$ is a torus quasi-torsor which is not a torsor.
We deduce that there exists an infinite sequence 
\[
 \xymatrix@R=2em@C=2em{
X^n=X^n_0 &
X^n_1\ar[l]_-{\phi_1} &
X^n_2\ar[l]_-{\phi_2} &
X^n_3\ar[l]_-{\phi_3} & 
\dots \ar[l]_-{\phi_4} &
X^n_i\ar[l]_-{\phi_i} &
X^n_{i+1}\ar[l]_-{\phi_{i+1}}& 
\dots \ar[l]_-{i+2}&
}
\]
satisfying the following conditions:
\begin{itemize}
\item $X^n$ is a $n$-dimensional projective variety with a single isolated toric singularity,
\item each $\phi_i$, with $i$ odd, is a finite torsor, and
\item each $\phi_i$, with $i\geq 2$ even, is a $\mathbb{T}$-quasi-torsor which is not a torsor.
\end{itemize}
This finishes the proof.
\end{proof}

Now, we turn to prove Theorem~\ref{thm:iteration3}.
We will need the following two lemmata.

\begin{lemma}\label{lem:pullback-cl}
Let $f\colon X'\rightarrow X$ be a finite $G$-torsor.
Let $x\in X$ and $x'\in f^{-1}(x)$ be two closed points.
Then, the induced homomorphism
$f^*\colon {\rm Cl}(X;x)\rightarrow {\rm Cl}(X';{x'})$
is a monomorphism.
\end{lemma}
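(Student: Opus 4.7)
The plan is to exploit two basic facts: (i) a class $[D]\in {\rm Cl}(X;x)$ vanishes if and only if $D$ is Cartier in some Zariski neighborhood of $x$, because ${\rm Pic}$ of a local ring is trivial and hence Cartier divisors in $\mathcal{O}_{X,x}$ coincide with principal ones; and (ii) invertibility of a coherent sheaf descends along faithfully flat morphisms. In this way the proof reduces to a direct descent argument, with no Galois cohomology required.

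Concretely, suppose $[D]\in {\rm Cl}(X;x)$ satisfies $f^*[D]=0$, so $f^*D$ is Cartier at $x'$ and therefore on some Zariski open $W\ni x'$. Since $f$ is a $G$-torsor, $f\circ g=f$ for every $g\in G$, hence $f^*D$ is $G$-invariant, and so is its Cartier locus $V\subseteq X'$. As $V$ contains $x'$ and is $G$-invariant, it automatically contains the entire fiber $f^{-1}(x)=G\cdot x'$.

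Next, because fibers of $f$ are $G$-orbits, any $G$-invariant open in $X'$ is the preimage of its image: given $y\in f^{-1}(f(V))$, write $f(y)=f(v)$ for some $v\in V$; then $y$ is a $G$-translate of $v$, so $y\in V$. Applied to $V$ this yields $V=f^{-1}(U)$ for an open $U\subseteq X$ containing $x$. The restriction $f\colon V\to U$ is again a $G$-torsor, in particular finite and faithfully flat. From the identity $f^*\mathcal{O}_X(D)|_U=\mathcal{O}_{X'}(f^*D)|_V$ and the invertibility of $\mathcal{O}_{X'}(f^*D)|_V$ on $V$, faithfully flat descent of invertibility implies that $\mathcal{O}_X(D)|_U$ is invertible on $U$. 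Hence $D$ is Cartier in a neighborhood of $x$, which means $[D]=0\in {\rm Cl}(X;x)$.

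I do not anticipate any serious obstacle; the argument is standard descent and does not use the klt hypothesis at all. The only point to double-check is the interpretation of ${\rm Cl}(X;x)$ as the class group of the local ring $\mathcal{O}_{X,x}$, in line with the usage in Theorem~\ref{thm:wdiv-cdiv} and Section~4, so that triviality of a class is equivalent to Cartier-ness of a representative in some Zariski neighborhood of $x$.
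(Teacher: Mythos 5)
Your proof is correct, and it takes a genuinely different route from the paper's. The paper argues with explicit Galois descent of the defining equation: it writes $f^*W=\mathrm{div}(h)$ on a $G$-invariant neighbourhood, invokes a lemma (cited as \cite[Thm.~II.3.1]{AGIV}) to see that $h$ is a $G$-semi-invariant, shows the character must be trivial because $f$ is \'etale (a nontrivial character would produce ramification), and concludes $h$ is $G$-invariant and therefore descends to a local equation for $W$ near $x$. You instead avoid the semi-invariance lemma entirely and run abstract fppf descent of local freeness: the Cartier locus of $f^*D$ is $G$-invariant, hence saturated, hence of the form $f^{-1}(U)$, and then the coherent sheaf $\mathcal{O}_X(D)|_U$ pulls back to the invertible $\mathcal{O}_{X'}(f^*D)|_V$ so is itself invertible by faithfully flat descent of flatness/local freeness. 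The one step worth spelling out in your write-up is the identity $(f|_V)^*\bigl(\mathcal{O}_X(D)|_U\bigr)=\mathcal{O}_{X'}(f^*D)|_V$: this is not automatic for arbitrary flat $f$ (the pullback of a reflexive sheaf need not be reflexive), but it does hold here because a finite $G$-torsor for $G$ finite is \'etale, so pullback commutes with $\mathcal{H}om$ and hence preserves reflexivity; both sides are then reflexive of rank one and agree on the big open where $D$ is Cartier. With that noted, your argument is clean and if anything more self-contained than the paper's, replacing the appeal to the semi-invariance result with a standard descent theorem; both proofs correctly use nothing beyond the $G$-torsor (in particular \'etale) structure of $f$, and neither needs any klt hypothesis.
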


\begin{proof}
Let $W$ be a Weil divisor through $x \in X$ such that $f^*W$ is principal near $x'$. We can even assume that there is a $G$-invariant open around $x'$ where $f^*W=\mathrm{div}(h)$. By~\cite[Thm.~II.3.1]{AGIV}, $h$ is a semiinvariant, i.e. $g^*h=\chi(g) h$, where $\chi(g) \in \mathbb{K}^*$, for every $g$ in $G$. But the induced action via $\chi \colon G \to \mathbb{K}^* \subseteq \mathbb{K}[X]^*$ on $X$ is ramified if it is nontrivial. This can not happen since $f$ is \'etale. Therefore, $h$ is $G$-invariant and defines $W=\mathrm{div}_{U}(h)$ on some $x \in U \subseteq X$. The claim follows.
\end{proof}

\begin{lemma}\label{lem:factorial-qt}
Let $Y\rightarrow X$ be the $\mathbb{T}$-quasi-torsor
associated to the group $N\leqslant {\rm WDiv}(X)$.
The variety $Y$ is locally factorial if and only if
$N\rightarrow {\rm Cl}(X;x)$ is surjective for every
closed point $x\in X$.
In particular, if $N\rightarrow {\rm WDiv}(X)/{\rm CaDiv}(X)$ is surjective, then $Y\rightarrow X$ is a factorial $\mathbb{T}$-quasi-torsor. 
\end{lemma}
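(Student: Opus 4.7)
The plan is to reduce the factoriality of $Y$ to a local computation: for every closed point $y\in Y$ lying over $x=\phi(y)$, I aim to establish an exact sequence
\[
N\xrightarrow{\alpha}\Cl(X;x)\longrightarrow\Cl(Y;y)\longrightarrow 0,
\]
where $\alpha$ sends $[D]$ with $D\in N$ to its restriction to $\Spec\oo_{X,x}$. Once this sequence is available, $Y$ is locally factorial precisely when each such cokernel vanishes; since a torus quasi-torsor is surjective, this is equivalent to $\alpha$ being surjective for every closed $x\in X$, which is the stated equivalence.

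To construct the sequence, I use that each $D\in N$ gives a tautological $\TT$-semi-invariant global section $s_D$ of the summand $\oo_X(D)\subset\phi_*\oo_Y$ with $\dv_Y(s_D)=\phi^*D$. Hence $\phi^*D$ is principal on $Y$, so the composition $N\to\Cl(X;x)\to\Cl(Y;y)$ vanishes. For surjectivity, I pass to a $\TT$-stable affine neighborhood of $y$ and apply Lemma~\ref{lem:torus-inv-lin-equiv-div} to replace any Weil divisor class on $Y$ by a $\TT$-invariant representative. By the quasi-torsor hypothesis, $\phi$ is a $\TT$-torsor outside a codimension-two subset, so a $\TT$-invariant Weil divisor on $Y$ is, up to codimension-two modification, horizontal and of the form $\phi^*E_X$ for some Weil divisor $E_X$ on $X$. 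Therefore $\Cl(X;x)\to\Cl(Y;y)$ is surjective.

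The main obstacle will be the kernel computation. Suppose $\phi^*E_X=\dv_Y(f)$ near $y$. Since the left-hand side is $\TT$-invariant, $t^*f/f$ is a unit for every $t\in\TT$; invoking condition~(2) of Definition~\ref{def:quasi-torsor} on descent of global invertible homogeneous functions, and working on a $\TT$-stable affine neighborhood of $y$---note that $y$ need not be $\TT$-fixed, so $\oo_{Y,y}$ itself is not $\TT$-stable---I expect to conclude that $f$ is a $\TT$-semi-invariant of some weight $D\in N$. Weight-$D$ semi-invariants on $Y$ are identified with elements of $H^0(\oo_X(D))$ near $x$, so $f$ corresponds to a rational function $h\in\kk(X)$ satisfying $\dv_X(h)+D\geq 0$ and $\dv_Y(f)=\phi^*(\dv_X(h)+D)$. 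Then $E_X-D$ is principal near $x$, i.e.\ $[E_X]_x=[D]_x\in\alpha(N)$, which completes the exactness.

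The final ``in particular'' clause follows immediately from the equivalence: for each $x\in X$ the restriction map $\WDiv(X)/\CaDiv(X)\to\Cl(X;x)$ is surjective, because every Weil divisor on $\Spec\oo_{X,x}$ extends to a Weil divisor on $X$ while Cartier divisors on $X$ restrict to principal divisors on the local ring. Hence surjectivity of $N\to\WDiv(X)/\CaDiv(X)$ forces surjectivity of each $N\to\Cl(X;x)$, and the first part of the lemma then delivers factoriality of $Y$.
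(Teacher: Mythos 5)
Your proof takes a genuinely different route from the paper's, which reduces to the stalk $\mathcal{R}(X)_{N,x}$ and directly cites \cite[Theorems~1.3.3.1 and 1.3.3.3]{ADHL15}; you instead re-derive from scratch the exact sequence $N\to\Cl(X;x)\to\Cl(Y;y)\to 0$ that those theorems encapsulate. The surjectivity half and the ``in particular'' clause are fine as written, and your approach is more self-contained at the cost of some graded-algebra bookkeeping.

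It is in that bookkeeping that your kernel computation has two gaps. First, condition (2) of Definition~\ref{def:quasi-torsor} is not the right tool: it governs \emph{globally} invertible homogeneous functions on the total space, whereas the unit $t^*f/f$ you produce lives only on a $\TT$-stable affine $V=\phi^{-1}(U_x)$. The correct ingredients are elementary graded algebra: (i) in the $N$-graded integral domain $\oo(V)=\bigoplus_{D\in N}H^0(U_x,\oo_X(D))$ every unit is homogeneous (compare extremal homogeneous components under any total order on $N\simeq\zz^k$ compatible with addition), and (ii) the map $t\mapsto\deg(t^*f/f)$ is then a group homomorphism $\TT\to N$ by the cocycle relation, since $t^*$ preserves degrees; its image is a subgroup of $N$ contained in the finite set of degree differences between the nonzero homogeneous components of $f$, hence trivial. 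Comparing characters then shows $f$ is homogeneous of a single weight $D\in N$, so $E_X-D=\dv_X(f)$ near $x$, which is what you want. Second, you pass too quickly from principality of $\phi^*E_X$ in $\oo_{Y,y}$ to principality on a $\TT$-stable affine: a priori one only gets a non-$\TT$-stable open $W\ni y$ on which $\phi^*E_X=\dv(f)$. One upgrades to a $\TT$-stable affine by observing that the ideal of the $\TT$-invariant divisor $\phi^*E_X$ is a graded subsheaf of $\oo_Y$, and a locally free rank-one graded module over the graded local ring at $y$ is generated by a single homogeneous element. With these two points filled in, the exact sequence holds and the equivalence follows as you describe.
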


\begin{proof}
The statement follows from~\cite[Theorem~1.3.3.3]{ADHL15} applied to the spectrum $X_x$ of the local ring $\mathcal{O}_{X,x}$, where we view $N$ as a subgroup of ${\rm WDiv}(X_x)$ by restriction. Then the aforementioned theorem says that the stalk $\mathcal{R}(X)_{N,x}$ is factorial if and only if $N\rightarrow {\rm Cl}(X;x)$ is surjective. In fact,~\cite[Theorem~1.3.3.3]{ADHL15} only states one direction, but it directly follows from applying~\cite[Theorem~1.3.3.1]{ADHL15} to the smooth locus, which gives an equivalence. It is clear that $Y$ is locally factorial if and only if all the stalks $\mathcal{R}(X)_{N,x}$ are factorial.
\end{proof}

\begin{proof}[Proof of Theorem~\ref{thm:iteration3}]
Consider a sequence 
\[
 \xymatrix@R=2em@C=2em{
X=X_0 &
X_1\ar[l]_-{\phi_1} &
X_2\ar[l]_-{\phi_2} &
X_3\ar[l]_-{\phi_3} & 
\dots \ar[l]_-{\phi_4} &
X_i\ar[l]_-{\phi_i} &
X_{i+1}\ar[l]_-{\phi_{i+1}}& 
\dots \ar[l]_-{\phi_{i+2}}&
}
\]
as in the statement of the theorem.
This means that every $\phi_i$ is either a factorial
$\mathbb{T}$-quasi-torsor or a finite quasi-torsor.
By Theorem~\ref{thm:iteration1}, we may assume, after possibly truncating our sequence, that every finite quasi-torsor in this sequence is a finite torsor, i.e., a finite Galois \'etale cover.
Proceeding as in the proof of Theorem~\ref{thm:iteration1}, we obtain a sequence of finite torsors:
\[
 \xymatrix@R=2em@C=2em{
X'=X'_0 &
X'_1\ar[l]_-{\phi'_1} &
X'_2\ar[l]_-{\phi'_2} &
X'_3\ar[l]_-{\phi'_3} & 
\dots \ar[l]_-{\phi'_4} &
X'_i\ar[l]_-{\phi'_i} &
X'_{i+1}\ar[l]_-{\phi'_{i+1}}& 
\dots \ar[l]_-{\phi'_{i+2}}&
}
\]
so that each $X_i\rightarrow X'_i$ is a relative Cox ring
with respect to the subgroup
$N_i\leqslant {\rm WDiv}(X_i')$.
By Lemma~\ref{lem:two-qt}, we know that 
every $\mathbb{T}$-quasi-torsor over a factorial variety is
indeed a torsor.
We write $\psi_i = \phi_i \circ \dots \circ \phi_1$.
By~\cite[Theorem 3.4.(1)]{bf84}, there exists a locally closed decomposition 
$X'=\bigsqcup_{j\in J} Y'_j$
such that the class group
of $X'_i$ at $x$ is independent of $x\in \psi_i^{-1}(Y'_j)$.
Applying Lemma~\ref{lem:pullback-cl} to closed points of $Y'_j$, we conclude that there exists $i_0\in \zz_{>0}$, 
such that 
\begin{equation}\label{eq:isoms-cl} 
{\phi'_i}^*\colon {\rm Cl}({X'_{i}};y)\rightarrow 
{\rm Cl}({X'_{i+1}};x)
\end{equation}  
is an isomorphism for every
$x\in X'_i$,
every $y\in {\phi'_i}^{-1}(x)$, and $i\geq i_0$.
It suffices to show that whenever $X_i$ is factorial
and $X_{i+1}\rightarrow X_i$ is a finite \'etale Galois cover in our sequence, the variety $X_{i+1}$ is again factorial for every $i\geq i_0$. 
In this case, we have a commutative diagram 
\[
 \xymatrix@R=2em@C=2em{
 X_i \ar[d]_-{\pi_i} & X_{i+1}\ar[d]^-{\pi_{i+1}} \ar[l]_-{\phi_{i}} \\
 X_i' & X_{i+1}\ar[l]_-{\phi_i'}
 }
\]
where $\pi_i$ is the relative Cox ring over $X'_i$ with respect to the subgroup $N_i\leqslant {\rm WDiv}(X_i')$.
By Lemma~\ref{lem:square-diagram}, the $\mathbb{T}$-quasi-torsor
$X_{i+1}\rightarrow X_i$ is induced by
${\phi_i'}^*N_i \leqslant {\rm WDiv}(X_{i+1})$.
By Lemma~\ref{lem:factorial-qt}, we know that for
every closed point $x\in X_i'$, the induced homomorphism
\begin{equation}\label{eq:surj-local-class}
N_i \rightarrow {\rm Cl}({X_i'};x)
\end{equation} 
is surjective.
By isomorphism~\eqref{eq:isoms-cl} and surjectivity~\eqref{eq:surj-local-class}, we conclude that for every closed point $x\in X'_{i+1}$ we have that 
${\phi'_i}^*N_i\rightarrow {\rm Cl}(X'_{i+1};x)$ is surjective. By Lemma~\ref{lem:factorial-qt}, we conclude that $X_{i+1}$ is a factorial variety. 
This finishes the proof.
\end{proof} 

\begin{proof}[Proof of Theorem~\ref{introthm:factorial-cover}]
Due to Theorem~\ref{thm:iteration3}, we may find a variety $Y$ 
satisfying the following properties:
\begin{enumerate}
    \item[(i)] every finite quasi-torsor over $Y$ is a finite torsor, 
    \item[(ii)] every factorial $\mathbb{T}$-quasi-torsor over a finite torsor of $Y$ is a $\mathbb{T}$-torsor, 
    \item[(iii)] $Y$ admits the action of a reductive group $G$,
    \item[(iv)] the group $G$ is an extension of an algebraic torus by a finite solvable group, and 
    \item[(v)] the isomorphism $X\simeq Y/\!\!/G$ holds.
\end{enumerate}
Note that condition (i) implies that the natural epimorphism
\[
\hat{\pi}_1(Y^{\rm reg})\rightarrow
\hat{\pi}_1(Y)
\]
is an isomorphism.
Otherwise, we could find a finite Galois quasi-\'etale cover of $Y$ that ramifies over the singular locus.
This shows that (1) in the statement of the theorem holds.

Let $Y'\rightarrow Y$ be a finite quasi-\'etale morphism.
By condition (i) this morphism is indeed
a finite \'etale morphism. 
Assume that $Y'$ is not factorial at the point $y'$.
By~\cite[Theorem 3.7]{GKP16}
there exists a finite \'etale Galois morphism
$Y''\rightarrow Y$ 
such that $Y''$ admits a finite \'etale Galois morphism to $Y'$.
By Lemma~\ref{lem:pullback-cl}, we conclude that $Y''$ is not factorial.
Thus, $Y''$ admits a factorial $\mathbb{T}$-quasi-torsor that is not a $\mathbb{T}$-torsor.
Indeed, we can take the relative Cox ring of $Y''$ with respect to a subgroup $N$ of ${\rm WDiv}(Y'')$
that surjects onto
${\rm WDiv}(Y'')/{\rm CaDiv}(Y'')$.
This contradicts condition (ii).
We conclude that (2) in the statement of the theorem holds. 
Note that (iii)-(v) are the same than
(3)-(5) in the statement of the theorem.
This finishes the proof.
\end{proof}

\begin{proof}[Proof of Theorem~\ref{introthm:toric-quot}]
Locally toric singularities are klt type singularities.
Then, we can apply Theorem~\ref{thm:iteration1} to deduce that 
$X$ admits a torus quasi-torsor $Y$ that is factorial.
By Theorem~\ref{introthm-finite-torus-cover-klt-var}, the variety $Y$ has klt type singularities, hence canonical factorial singularities.
The local Cox ring of a locally toric singularity is a locally toric singularity.
Hence, the variety $Y$ has factorial locally toric singularities.
However, a factorial toric singularity is smooth.
We conclude that $Y$ is a smooth variety.
\end{proof}

\subsection{Normal singularities}
\label{subsec:normal-singularities}
In this subsection, we show that some of the proofs explained above naturally generalize to normal singularities with some minor considerations.

\begin{proof}[Proof of Theorem~\ref{introthm:optimal-T}]
If each Class group ${\rm Cl}(X;x)$ is finitely generated, then by Theorem~\ref{thm:wdiv-cdiv}, we know that
${\rm WDiv}(X)/{\rm CaDiv}(X)$ is finitely generated.
Then, the proof is verbatim from the proof of Theorem~\ref{introthm:iteration-torus-quasi-torsors}.
This shows that $(2)$ implies $(1)$.

Now, we turn to prove that $(1)$ implies $(2)$.
On the other hand assume that some local Class group ${\rm Cl}(X;x_0)$ is not finitely generated. We consider an infinite sequence of divisors
$\{W_i\}_{i\in \mathbb{N}}$ in ${\rm WDiv}(X)$
such that the image of $N_k:=\langle W_1,\dots,W_k\rangle$ in ${\rm Cl}(X;x_0)$ strictly contains $N_{k-1}$.
Let $X_i\rightarrow X$ be the $\mathbb{T}$-quasi-torsor associated to $N_i$. 
By construction, the quotients 
$N_{i+1}/N_i$ are free.
Then, by Lemma~\ref{lem:dominating-qt}, we have associated 
torus quasi-torsors 
\[
 \xymatrix@R=2em@C=2em{
X=X_0 &
X_1\ar[l]_-{\phi_1} &
X_2\ar[l]_-{\phi_2} &
X_3\ar[l]_-{\phi_3} & 
\dots \ar[l]_-{\phi_4} &
X_i\ar[l]_-{\phi_i} &
X_{i+1}\ar[l]_-{\phi_{i+1}}& 
\dots \ar[l]_-{\phi_{i+2}}&
}
\] 
By Lemma~\ref{lem:two-qt}.(3), no 
$\phi_i$ is a $\mathbb{T}$-torsor.
\end{proof}

Now, we turn to give a proof of Theorem~\ref{introthm:optimal-mixed} that discusses optimal normal singularities for which sequences of quasi-torsors are eventually torsors.
We will use the following lemma.

\begin{lemma}\label{lem:rat1-sing-cover}
Let $f\colon X'\rightarrow X$ be a quasi-\'etale finite Galois morphism. Let $x\in X$ and $x'\in f^{-1}(x)$ be finite points.
Then, if ${\rm Cl}(X';x')$ is finitely generated, then 
${\rm Cl}(X;x)$ is finitely generated.
\end{lemma}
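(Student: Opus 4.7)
The plan is to study the pullback homomorphism on local class groups. Write $f^{-1}(x)=\{x'_{1},\dots,x'_{r}\}$ with $x'=x'_{1}$; since the Galois group $G$ acts transitively on this fiber, the groups $\Cl(X';x'_{i})$ are all conjugate under $G$ and hence all finitely generated by hypothesis. Let $B$ denote the semi-localization of $\mathcal{O}_{X'}$ at $f^{-1}(x)$, so $B^{G}=\mathcal{O}_{X,x}$ and $\Cl(\Spec B)\cong\bigoplus_{i}\Cl(X';x'_{i})$ is finitely generated. Because $f$ is \'etale in codimension one, pullback of Weil divisor classes gives a well-defined homomorphism $f^{*}\colon\Cl(X;x)\to\Cl(\Spec B)$, and its image is finitely generated as a subgroup of $\Cl(\Spec B)$. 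The remaining task is to bound $\ker(f^{*})$.

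For this, take $[D]\in\ker(f^{*})$ and choose $h$ in the fraction field of $B$ with $f^{*}D=\dv(h)$. Since $g^{*}(f^{*}D)=f^{*}D$ for every $g\in G$, the assignment $g\mapsto g^{*}h/h$ defines a $1$-cocycle in $Z^{1}(G,B^{*})$, whose cohomology class is independent of the choices of representative $D$ and of $h$. This yields a homomorphism $\ker(f^{*})\to H^{1}(G,B^{*})$. If the cocycle is a coboundary $g^{*}h/h=g^{*}u/u$ for some $u\in B^{*}$, then $h/u$ is $G$-invariant, hence equals $f^{*}a$ for some $a\in\kk(X)^{*}$; consequently $f^{*}D=f^{*}\dv(a)$ in the semi-local setting, and injectivity of $f^{*}$ on Weil divisors in codimension one gives $[D]=0$. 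Thus $\ker(f^{*})\hookrightarrow H^{1}(G,B^{*})$.

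It remains to show that $H^{1}(G,B^{*})$ is finite. Using the short exact sequence $1\to 1+\mathfrak{n}\to B^{*}\to\prod_{i=1}^{r}\kk^{*}\to 1$, where $\mathfrak{n}$ is the Jacobson radical of $B$, and the fact that in characteristic zero $1+\mathfrak{n}$ is uniquely $|G|$-divisible (so $H^{j}(G,1+\mathfrak{n})=0$ for $j\geq 1$), we obtain an injection $H^{1}(G,B^{*})\hookrightarrow H^{1}(G,\prod_{i}\kk^{*})$, which is a subquotient of $\Hom(G,\prod_{i}\kk^{*})$ and therefore finite. Combining this with the previous paragraph, $\Cl(X;x)$ is an extension of a finitely generated group by a finite group, hence finitely generated.

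The main obstacle I anticipate is the careful bookkeeping around the codimension-two ramification of $f$: one must verify that the reflexive pullback $f^{*}$ on Weil divisor classes is well defined and injective on local class groups despite the branch locus, and that the semi-local class group decomposes as a direct sum over $f^{-1}(x)$ as claimed. Once these standard facts about quasi-\'etale morphisms are in place, the cocycle construction and the Hilbert~90--type computation are essentially formal.
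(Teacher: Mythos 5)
Your overall strategy is sound and, in outline, a cleaner version of what the paper does: both arguments control the kernel of $f^*\colon\Cl(X;x)\to\Cl(X';x')$ and then conclude from the exact sequence. The paper argues (via a norm/semiinvariance computation on the cocycle $g\mapsto g^*h/h$) that $|G|$ kills the kernel and then asserts the result; your route identifies $\ker(f^*)$ with a subgroup of $H^1(G,B^*)$ -- this is precisely the Galois-descent exact sequence for class groups of Krull domains -- and then tries to prove that $H^1(G,B^*)$ is \emph{finite}, which is the more honest way to close the argument, since ``$\ker(f^*)$ is torsion of bounded exponent'' by itself does not imply finite generation of $\Cl(X;x)$.

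However, there is a genuine gap in the last step. The assertion that $1+\mathfrak{n}$ is uniquely $|G|$-divisible in characteristic zero is false for the Zariski-local (non-Henselian, non-complete) semi-local rings that actually occur here. Already for $B=\kk[x]_{(x)}$ with $\mathfrak{n}=(x)$, the element $1+x\in 1+\mathfrak{n}$ is not a square: an identity $g^2(1+x)=f^2$ with $f,g\in\kk[x]$ and $g(0)\neq 0$ gives $2\deg g+1=2\deg f$, a contradiction. (Unique divisibility of $1+\mathfrak{n}$ does hold after Henselization or completion, where one can invoke Hensel's lemma or an exp/log argument, but not before.) Consequently the claimed vanishing $H^j(G,1+\mathfrak{n})=0$ for $j\geq 1$, and with it the injection $H^1(G,B^*)\hookrightarrow H^1(G,\prod_i\kk^*)$, is not justified; yet the finiteness of $H^1(G,B^*)$ is exactly the missing content. (The finiteness of $H^1(G,\prod_i\kk^*)$ is fine: by Shapiro's lemma it is $\Hom(H,\kk^*)$ for $H$ the stabilizer of $x'$.) To repair this you would need either an independent reason that $H^1(G,1+\mathfrak{n})$ is finite (it is killed by $|G|$, but that alone is not enough), or to replace $B^*$ by the unit group of a small $G$-invariant affine on which the only units are constants, so that the cocycle $g\mapsto g^*h/h$ lands in $Z^1(G,\kk^*)=\Hom(G,\kk^*)$; the latter is in effect what the paper's appeal to semiinvariants accomplishes in the companion Lemma~\ref{lem:pullback-cl}.

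A minor point: the isomorphism $\Cl(\Spec B)\cong\bigoplus_i\Cl(X';x'_i)$ is stated too strongly, since a height-one prime of $B$ may be contained in several of the maximal ideals, so the natural map need not be surjective. What you actually use -- and what is true because $\Pic$ of a semi-local ring vanishes, so ``Cartier at every $\mathfrak{m}_i$'' implies ``principal on $\Spec B$'' -- is that the map is injective; this already gives finite generation of $\Cl(\Spec B)$.
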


\begin{proof}
Let $W$ be a Weil divisor through $x\in X$ such that $f^*W$ is Cartier.
Then, passing to a $G$-invariant affine we may assume $f^*W={\rm div}(h)$. The regular function $h^{|G|}$ is $G$-invariant. 
Hence $mW\sim 0$ around $x$.
We conclude that the kernel of
$f^*\colon {\rm Cl}(X;x)\rightarrow {\rm Cl}(X';x')$ is torsion.
So, if ${\rm Cl}(X;x)$ is not finitely generated, then 
$f^*{\rm Cl}(X;x)$ is not finitely generated and the claim follows.
\end{proof}

\begin{proof}[Proof of Theorem~\ref{introthm:optimal-mixed}]
First, assume that conditions $(a)$ and $(b)$ are satisfied. 
In the proof of Theorem~\ref{thm:iteration3}, we used the argument of Theorem~\ref{thm:iteration1}
to deduce that the finite quasi-torsors are eventually torsors. 
The same argument 
goes through in the present case by 
replacing~\cite[Theorem 1.1]{GKP16}
with~\cite[Theorem 1]{Sti17}.

On the other hand, in the proof of Theorem~\ref{thm:iteration3}, we used
the constructibility of the functor ${\rm Cl}$ in the \'etale topology.
We argue that this still holds in the present setting.
By Lemma~\ref{lem:rat1-sing-cover}, every local Class group ${\rm Cl}(X;x)$ is finitely generated.
Let $f\colon X'\rightarrow X$ be a resolution of singularities. 
In this case we have that
$R^1f_*(\mathcal{O}_{X'})=0$ 
as the local Class groups are finitely generated.
Then, we can apply~\cite[Theorem 3.1.(4)]{GKP16} to conclude that the Class group functor is constructible in the \'etale topology.
Now, the proof is verbatim from the proof of Theorem~\ref{thm:iteration3}.

Now, assume that condition $(1)$ is not satisfied.
By~\cite[Theorem 1]{Sti17}, there is an infinite sequence of finite quasi-torsors of $X$ that are not torsors.
On the other hand, assume that condition $(2)$ is not satisfied.
Then, there exists a finite quasi-\'etale cover $X'\rightarrow X$ and a point $x'$ for which ${\rm Cl}(X';x')$ is not finitely generated.
Hence, proceeding as in the proof of Theorem~\ref{introthm:optimal-T}, we conclude that there exists an infinite sequence of $\mathbb{T}$-quasi-torsors
over $X'$ that are not torsors.
\end{proof} 

\section{Examples and Questions}

In this section, we collect some examples related to the theorems of the article
and some questions that lead to further research.

\begin{example}\label{ex:cod-1-ram}
{\em 
In this example, we show that 
finite covers of klt singularities ramified
over codimension one points may not be klt.
Let $D=\{(y,z)\mid y^3+z^m=0 \}\subset \mathbb{A}^2_{y,z}$.
Then the singularity
\[
X:=\{ (x,y,z) | x^2+y^3+z^m=0\} 
\]
is a double cover of $\mathbb{A}^2_{y,z}$ ramified along $D$.
The singularity $(X;(0,0,0))$ is Du Val
for $m\in \{4,5\}$.
Otherwise, it is not a klt surface singularity.
}
\end{example}

\begin{example}\label{ex:torsion-quotient}
{\em 
In the construction of the local Cox ring of a singularity $(X;x)$, 
we choose a homomorphism
$N\rightarrow {\rm Cl}(X;x)$ with kernel $N_0$ 
and a character $\chi \colon N^0\rightarrow \mathbb{K}(X)^*$ for which
${\rm div}(\chi(E))=E$
for all $E\in N^0$.
If $X$ is a projective variety for which ${\rm WDiv}(X)/{\rm CaDiv}(X)$ is torsion, 
then we can consider a surjective homomorphism
$N\rightarrow {\rm WDiv}(X)/{\rm CaDiv}(X)$
with kernel $N^0$
and a character as before $\chi\colon N^0\rightarrow \mathbb{K}(X)^*$.
If $\mathcal{I}$ is the ideal sheaf 
generated by $1-\chi(E)$ with $E\in N^0$, 
then the morphism
\[
Y:={\rm Spec}_X(\mathcal{R}(X)_N/\mathcal{I})\rightarrow X
\]
is finite and ramifies over codimension one points. A priori, it is not clear how to control the divisors over which the previous morphism ramifies.
Hence, it is not clear whether $Y$ has klt type singularities provided that $X$ has klt type singularities.
This means that the concept of relative Cox ring with quotients induced by characters is not well-behaved from the singularities perspective.
}
\end{example}

\begin{example}\label{ex:sing-improve}
{\em 
The local Cox ring of a klt type singularity $(X;x)$ is non-trivial whenever
${\rm Cl}(X;x)$ is non-trivial.
If $(Y;y)\rightarrow (X;x)$ is the spectrum of the local Cox ring of $(X;x)$,
then we expect that the equations defining $(Y;y)$ are somewhat simpler than the equations defining $(X;x)$.
Although, whenever ${\rm Cl}(X;x)$ has non-trivial free part, the dimension of $(Y;y)$ is larger than the dimension of $(X;x)$.
For instance, if $(X;x)$ is a toric singularity, then
${\rm Cox}(X;x)$ is a smooth point of dimension $\dim X +\rho(X_x)$.
}
\end{example}

\begin{example}\label{ex:thm7-analytically-local}
{\em 
Consider the affine variety
\[
X=\{(x,y,z,w)\mid xy+zw+z^3+w^3=0\}.
\]
The variety $X$ is canonical with isolated singularities.
Furthermore, $X$ is factorial at $x:=(0,0,0,0)$.
However, $X$ is not analytically factorial at $x$. 
Let $Y\rightarrow X$ be a $\mathbb{T}$-quasi-torsor.
Then, $\pi\colon Y\rightarrow X$ is a $\mathbb{T}$-torsor on an affine neighborhood of $x\in X$, by Lemma~\ref{lem:two-qt}.
Hence, $Y$ is singular along $\pi^{-1}(x)$.
This leads to a contradiction.
}
\end{example}

\begin{example}
\label{ex:semisimple} 
{\em 
Over a smooth point, every finite quasi-torsor is a torsor
and every torus quasi-torsor is a torsor.
In this example, we show the existence of a ${\rm SL}_n(\kk)$-quasi-torsor over a smooth germ that is not a torsor.
We refer the reader to~\cite{Pop92} for more examples in this direction.

In what follows, we let $n\geq 2$.
Let $W$ be the space of linear transformations from 
$\cc^{n+1}$ to $\cc^n$.
Note that $W$ has dimension $n^2+n$.
Let ${\rm SL}_n(\cc)$ act on $W$ by acting on the range of the linear function.
The action is free exactly at all the points corresponding to surjective linear transformations.
The closure of the orbit of an element contains $0\in W$ if and only if the corresponding linear transformation does not have full rank.
Let $U\subset W$ be the open set consisting of surjective linear transformations.
Note that the ${\rm SL}_n(\cc)$-action naturally extend to a ${\rm Gl}_n(\cc)$-action.
The space of surjective linear transformations, up to the ${\rm GL}_n(\cc)$-action, is parametrized by their kernels.
Hence, the quotient $U/\!\!/{\rm GL}_n(\cc)\simeq \pp^{n}$.
Furthermore, we have that 
$U/\!\!/{\rm SL}_n(\cc)\simeq \mathbb{A}^{n+1}-\{0\}$.
It follows that 
$W/\!\!/{\rm SL}_n(\cc)\simeq \mathbb{A}^n$.
As explained above, the action is free on $U$
so $W\rightarrow \mathbb{A}^n$ is a ${\rm SL}_n(\cc)$-torsor over $\mathbb{A}^n-\{0\}$.
On the other hand, the fiber over $\{0\}$ is given by the vanishing of at least $n$ minors, so its codimension in $W$ is at least $2$.
Thus, $W\rightarrow \mathbb{A}^n$ gives a ${\rm SL}_n(\cc)$-quasi-torsor which is not a torsor.
}
\end{example}

\begin{remark}
{\em 
One of the reasons that makes the understanding of finite quasi-torsors  of a singularity $(X;x)$ easier than other kinds of quasi-torsors
is the existence of an algebraic object that detects them.
The same holds for torus quasi-torsors.

The previous example might point in the direction that this is not true anymore for arbitrary reductive groups, which might render the  study of their (quasi-)torsors a lot more complicated.
}
\end{remark}

\begin{question}
{\em 
In Theorem~\ref{introthrm:gl2-cover}, we showed that there exists a $3$-fold toric singularity $(T;t)$ that admits a $\mathbb{P}{\rm GL}_3(\kk)$-cover from a $5$-dimensional singularity which is not of klt type.
This cover is unramified over codimension points over $T$ so the pathology in Example~\ref{ex:cod-1-ram} does not happen.
This naturally leads to the following question.
Is there a $G$-cover over a surface klt singularity that is unramified over codimension one points and is not of klt type?
}
\end{question}

\begin{question}\label{quest:G-qt-klt-type}
{\em 
In Theorem~\ref{introthm:torus-finite-cover}, we showed that if
$(X;x)$ is a klt type singularity
and $G$ is a finite extension of a torus, then a $G$-quasi-torsor over $(X;x)$ is of klt type. 
Does this statement still hold if we only assume that $G$ is a reductive group?
We expect that there are counter-examples for this statement if $G$ is a unipotent group.
}
\end{question}

\bibliographystyle{habbrv}
\bibliography{mybibfile}

\begin{thebibliography}{10}
\expandafter\ifx\csname url\endcsname\relax
  \def\url#1{\texttt{#1}}\fi
\expandafter\ifx\csname doi\endcsname\relax
  \def\doi#1{\burlalt{doi:#1}{http://dx.doi.org/#1}}\fi
\expandafter\ifx\csname urlprefix\endcsname\relax\def\urlprefix{URL }\fi
\expandafter\ifx\csname href\endcsname\relax
  \def\href#1#2{#2}\fi
\expandafter\ifx\csname burlalt\endcsname\relax
  \def\burlalt#1#2{\href{#2}{#1}}\fi

\bibitem{AH06}
K.~Altmann and J.~Hausen.
\newblock Polyhedral divisors and algebraic torus actions.
\newblock {\em Math. Ann.}, 334(3):557--607, 2006.
\newblock \doi{10.1007/s00208-005-0705-8}.

\bibitem{ADHL15}
I.~Arzhantsev, U.~Derenthal, J.~Hausen, and A.~Laface.
\newblock {\em Cox rings}, volume 144 of {\em Cambridge Studies in Advanced
  Mathematics}.
\newblock Cambridge University Press, Cambridge, 2015.

\bibitem{bf84}
J.~Bingener and H.~Flenner.
\newblock Variation of the divisor class group.
\newblock {\em J. Reine Angew. Math.}, 351:20--41, 1984.

\bibitem{BCHM10}
C.~Birkar, P.~Cascini, C.~D. Hacon, and J.~McKernan.
\newblock Existence of minimal models for varieties of log general type.
\newblock {\em J. Amer. Math. Soc.}, 23(2):405--468, 2010.
\newblock \doi{10.1090/S0894-0347-09-00649-3}.

\bibitem{BGS11}
S.~Boissière, O.~Gabber, and O.~Serman.
\newblock Sur le produit de variétés localement factorielles ou
  q-factorielles, 2011,
  \burlalt{arXiv:1104.1861}{http://arxiv.org/abs/arXiv:1104.1861}.

\bibitem{BdFFU15}
S.~Boucksom, T.~de~Fernex, C.~Favre, and S.~Urbinati.
\newblock Valuation spaces and multiplier ideals on singular varieties.
\newblock In {\em Recent advances in algebraic geometry}, volume 417 of {\em
  London Math. Soc. Lecture Note Ser.}, pages 29--51. Cambridge Univ. Press,
  Cambridge, 2015.

\bibitem{Bou78}
J.-F. Boutot.
\newblock {\em Sch\'{e}ma de {P}icard local}, volume 632 of {\em Lecture Notes
  in Mathematics}.
\newblock Springer, Berlin, 1978.

\bibitem{Bra21}
L.~Braun.
\newblock Completing the classification of representations of {${\rm SL}_n$}
  with complete intersection invariant ring.
\newblock {\em Transform. Groups}, 26(1):115--144, 2021.
\newblock \doi{10.1007/s00031-020-09605-0}.

\bibitem{Bra21b}
L.~Braun.
\newblock Invariant rings of sums of fundamental representations of {${ \rm
  SL}_n$} and colored hypergraphs.
\newblock {\em Adv. Math.}, 389:Paper No. 107929, 34, 2021.
\newblock \doi{10.1016/j.aim.2021.107929}.

\bibitem{Bra20}
L.~Braun.
\newblock The local fundamental group of a kawamata log terminal singularity is
  finite.
\newblock {\em Invent. Math.}, 226(3):845--896, 2021.
\newblock \doi{10.1007/s00222-021-01062-0}.

\bibitem{Bra19}
L.~Braun.
\newblock Gorensteinness and iteration of {C}ox rings for {F}ano type
  varieties.
\newblock {\em Math. Z.}, 301(1):1047--1061, 2022.
\newblock \doi{10.1007/s00209-021-02946-w}.

\bibitem{BFMS22}
L.~Braun, S.~Filipazzi, J.~Moraga, and R.~Svaldi.
\newblock The {J}ordan property for local fundamental groups.
\newblock {\em Geom. Topol.}, 26(1):283--319, 2022.
\newblock \doi{10.2140/gt.2022.26.283}.

\bibitem{BGLM21}
L.~Braun, D.~Greb, K.~Langlois, and J.~Moraga.
\newblock Reductive quotients of klt singularities, 2021,
  \burlalt{arXiv:2111.02812}{http://arxiv.org/abs/arXiv:2111.02812}.

\bibitem{BM21}
L.~Braun and J.~Moraga.
\newblock Iteration of {Cox} rings of klt singularities, 2021,
  \burlalt{arXiv:2103.13524}{http://arxiv.org/abs/arXiv:2103.13524}.

\bibitem{CLS11}
D.~A. Cox, J.~B. Little, and H.~K. Schenck.
\newblock {\em Toric varieties}, volume 124 of {\em Graduate Studies in
  Mathematics}.
\newblock American Mathematical Society, Providence, RI, 2011.
\newblock \doi{10.1090/gsm/124}.

\bibitem{EHKV01}
D.~Edidin, B.~Hassett, A.~Kresch, and A.~Vistoli.
\newblock Brauer groups and quotient stacks.
\newblock {\em Amer. J. Math.}, 123(4):761--777, 2001.
\newblock
  \urlprefix\url{http://muse.jhu.edu/journals/american_journal_of_mathematics/v123/123.4edidin.pdf}.

\bibitem{GS15}
A.~Geraschenko and M.~Satriano.
\newblock Torus quotients as global quotients by finite groups.
\newblock {\em J. Lond. Math. Soc. (2)}, 92(3):736--759, 2015.
\newblock \doi{10.1112/jlms/jdv046}.

\bibitem{GHPS12}
J.~Gonz\'{a}lez, M.~Hering, S.~Payne, and H.~S\"{u}\ss.
\newblock Cox rings and pseudoeffective cones of projectivized toric vector
  bundles.
\newblock {\em Algebra Number Theory}, 6(5):995--1017, 2012.
\newblock \doi{10.2140/ant.2012.6.995}.

\bibitem{GKP16}
D.~Greb, S.~Kebekus, and T.~Peternell.
\newblock \'{E}tale fundamental groups of {K}awamata log terminal spaces, flat
  sheaves, and quotients of abelian varieties.
\newblock {\em Duke Math. J.}, 165(10):1965--2004, 2016.
\newblock \doi{10.1215/00127094-3450859}.

\bibitem{Har77}
R.~Hartshorne.
\newblock {\em Algebraic geometry}.
\newblock Graduate Texts in Mathematics, No. 52. Springer-Verlag, New
  York-Heidelberg, 1977.

\bibitem{Hum75}
J.~E. Humphreys.
\newblock {\em Linear algebraic groups}.
\newblock Graduate Texts in Mathematics, No. 21. Springer-Verlag, New
  York-Heidelberg, 1975.

\bibitem{Kol13}
J.~Koll\'{a}r.
\newblock {\em Singularities of the minimal model program}, volume 200 of {\em
  Cambridge Tracts in Mathematics}.
\newblock Cambridge University Press, Cambridge, 2013.
\newblock \doi{10.1017/CBO9781139547895}.
\newblock With a collaboration of S\'{a}ndor Kov\'{a}cs.

\bibitem{KM98}
J.~Koll\'{a}r and S.~Mori.
\newblock {\em Birational geometry of algebraic varieties}, volume 134 of {\em
  Cambridge Tracts in Mathematics}.
\newblock Cambridge University Press, Cambridge, 1998.
\newblock \doi{10.1017/CBO9780511662560}.
\newblock With the collaboration of C. H. Clemens and A. Corti, Translated from
  the 1998 Japanese original.

\bibitem{Kol20}
J.~Kollár.
\newblock What determines a variety?, 2020,
  \burlalt{arXiv:2002.12424}{http://arxiv.org/abs/arXiv:2002.12424}.

\bibitem{KV04}
A.~Kresch and A.~Vistoli.
\newblock On coverings of {D}eligne-{M}umford stacks and surjectivity of the
  {B}rauer map.
\newblock {\em Bull. London Math. Soc.}, 36(2):188--192, 2004.
\newblock \doi{10.1112/S0024609303002728}.

\bibitem{LLM19}
A.~Laface, A.~Liendo, and J.~Moraga.
\newblock The fundamental group of a log terminal {$\Bbb T$}-variety.
\newblock {\em Eur. J. Math.}, 5(3):937--957, 2019.
\newblock \doi{10.1007/s40879-018-0296-z}.

\bibitem{Mor20d}
J.~Moraga.
\newblock Small quotient minimal log discrepancies, 2020,
  \burlalt{arXiv:2008.13311}{http://arxiv.org/abs/arXiv:2008.13311}.

\bibitem{Mor20c}
J.~Moraga.
\newblock Kawamata log terminal singularities of full rank, 2021,
  \burlalt{arXiv:2007.10322}{http://arxiv.org/abs/arXiv:2007.10322}.

\bibitem{Mor21}
J.~Moraga.
\newblock On a toroidalization for klt singularities, 2021,
  \burlalt{arXiv:2106.15019}{http://arxiv.org/abs/arXiv:2106.15019}.

\bibitem{Nag61}
M.~Nagata.
\newblock On the {$14$}th problem of {H}ilbert.
\newblock {\em S\={u}gaku}, 12:203--209, 1960/61.

\bibitem{Oka16}
S.~Okawa.
\newblock On images of {M}ori dream spaces.
\newblock {\em Math. Ann.}, 364(3-4):1315--1342, 2016.
\newblock \doi{10.1007/s00208-015-1245-5}.

\bibitem{Pop92}
V.~L. Popov.
\newblock {\em Groups, generators, syzygies, and orbits in invariant theory},
  volume 100 of {\em Translations of Mathematical Monographs}.
\newblock American Mathematical Society, Providence, RI, 1992.
\newblock \doi{10.1090/mmono/100}.
\newblock Translated from the Russian by A. Martsinkovsky.

\bibitem{Rei87}
M.~Reid.
\newblock Young person's guide to canonical singularities.
\newblock In {\em Algebraic geometry, {B}owdoin, 1985 ({B}runswick, {M}aine,
  1985)}, volume~46 of {\em Proc. Sympos. Pure Math.}, pages 345--414. Amer.
  Math. Soc., Providence, RI, 1987.

\bibitem{AGIV}
I.~R. Shafarevich, editor.
\newblock {\em Algebraic geometry. {IV}}, volume~55 of {\em Encyclopaedia of
  Mathematical Sciences}.
\newblock Springer-Verlag, Berlin, 1994.
\newblock \doi{10.1007/978-3-662-03073-8}.
\newblock Linear algebraic groups. Invariant theory, A translation of {{\i}t
  Algebraic geometry. 4} (Russian), Akad. Nauk SSSR Vsesoyuz. Inst. Nauchn. i
  Tekhn. Inform., Moscow, 1989 [ MR1100483 (91k:14001)], Translation edited by
  A. N. Parshin and I. R. Shafarevich.

\bibitem{Sti17}
C.~Stibitz.
\newblock Étale covers and local algebraic fundamental groups, 2017,
  \burlalt{arXiv:1707.08611}{http://arxiv.org/abs/arXiv:1707.08611}.

\bibitem{Sum74}
H.~Sumihiro.
\newblock Equivariant completion.
\newblock {\em J. Math. Kyoto Univ.}, 14:1--28, 1974.
\newblock \doi{10.1215/kjm/1250523277}.

\end{thebibliography}

\end{document}